\newtheorem{thm}{Theorem}[subsection]
\newtheorem{lem}[thm]{Lemma}
\newtheorem{cor}[thm]{Corollary}
\newtheorem{prop}[thm]{Proposition}
\newenvironment{customthm}[1]{\innercustomthm}{\endinnercustomthm}
\newenvironment{customlem}[1]
{\innercustomlem}{\endinnercustomlem}
\newenvironment{customcor}[1]{\innercustomcor}{\endinnercustomcor}
\theoremstyle{definition}
\newtheorem{defn}[thm]{Definition}
\newtheorem{eg}[thm]{Example}
 \newtheorem{notation}[thm]{Notation}
\newtheorem{rem}[thm]{Remark}
\numberwithin{equation}{section}
 \tikzset{help lines/.style={step=#1cm,very thin, color=gray},
help lines/.default=.5} % draws a grid spaced #1 cm
\tikzset{thick grid/.style={step=#1cm,thick, color=gray},
thick grid/.default=1} % draws a grid spaced #1 cm
\newcommand{\vs}[1]{\vskip .#1 cm} %enter amount of skip wanted at #1
\newcommand{\noi}{\noindent}
\newcommand{\xrarrow}{\xrightarrow} %right arrow {label on top}
\newcommand{\then}{\Rightarrow}
\newcommand{\into}{\hookrightarrow}
 \newcommand{\onto}{\twoheadrightarrow}
 \newcommand{\cof}{\rightarrowtail}
 \newcommand{\brk}[1]{\left<#1\right>}
\DeclareMathOperator{\Hom}{Hom}%
\DeclareMathOperator{\Ext}{Ext}%
\DeclareMathOperator{\End}{End}%
\DeclareMathOperator{\interior}{int}  % interior
\newcommand\undim{\underline\dim\,}
 \newcommand\und{\underline}
\newcommand{\field}[1]{\mathbb{#1}}
\newcommand{\ZZ}{\ensuremath{{\field{Z}}}}
\newcommand{\CC}{\ensuremath{{\field{C}}}}
\newcommand{\RR}{\ensuremath{{\field{R}}}}
\newcommand{\QQ}{\ensuremath{{\field{Q}}}}
\newcommand{\commentout}[1]{}
\newcommand{\cA}{\ensuremath{{\mathcal{A}}}}
\newcommand{\cB}{\ensuremath{{\mathcal{B}}}}
\newcommand{\cP}{\ensuremath{{\mathcal{P}}}}
\newcommand{\cQ}{\ensuremath{{\mathcal{Q}}}}
\newcommand{\cR}{\ensuremath{{\mathcal{R}}}}
\newcommand{\cS}{\ensuremath{{\mathcal{S}}}}
\newcommand{\cU}{\ensuremath{{\mathcal{U}}}}
\newcommand{\cV}{\ensuremath{{\mathcal{V}}}}
\newcommand{\cW}{\ensuremath{{\mathcal{W}}}}
\newcommand{\cX}{\ensuremath{{\mathcal{X}}}}
\newcommand{\cY}{\ensuremath{{\mathcal{Y}}}}
\title{Picture groups and maximal green sequences}
\author{Kiyoshi Igusa}
\address{Department of Mathematics, Brandeis University, Waltham, MA 02454}\email{igusa@brandeis.edu}
\author{Gordana Todorov}
\address{Department of Mathematics, Northeastern University, Boston, MA 02115}\email{g.todorov@northeastern.edu}
\subjclass[2010]{
16G20; 20F55}
\begin{document}

\begin{abstract}
We show that picture groups are directly related to maximal green sequences for valued Dynkin quivers of finite type. Namely, there is a bijection between maximal green sequences and positive expressions (words in the generators without inverses) for the Coxeter element of the picture group. We actually prove the theorem for the more general set up of finite ``vertically and laterally ordered'' sets of positive real Schur roots for any hereditary algebra (not necessarily of finite type).

Furthermore, we show that every picture for such a set of positive roots is a linear combination of ``atoms'' and we give a precise description of atoms as special semi-invariant pictures.
\end{abstract}

\maketitle

\tableofcontents

%\listoffigures

 %\newpage
 
The theory of pictures and picture groups comes from topology and goes back to the first author's PhD thesis \cite{I:thesis} where pictures were used to represent elements in the higher algebraic $K$-theory group $K_3\ZZ[\pi]$ and these were used to obtain obstructions to the 1-parameter ``pseudoisotopy implies isotopy'' problem, extending work of Allen Hatcher and John Wagoner who showed the relation between $K_2\ZZ[\pi]$ and pseudoisotopy \cite{HW}. The salient fact is that pictures for a group $G$ represent elements of $H_3(G)$ and, by an observation of S.M. Gersten \cite{Gersten}, $K_3R=H_3(St(R))$ for any ring $R$ (and $K_2R$ is the center of the Steinberg group $St(R)$). Pictures are also known as ``spherical diagrams'' in some text books on combinatorial group theory \cite{LS}. They are also called ``Peiffer diagrams'' and generally attributed to Ren\'ee Peiffer who gave the algebraic definition of pictures in \cite{Peiffer}. (See Theorem \ref{eq: relation for Q(G)}.) ``Partial pictures'' were used by John Wagoner \cite{JW} to describe the boundary map in the long exact $K$-theory sequence of an ideal. Later, in \cite{IK}, John Klein and the first author used Morse theory to construct a picture representing nontrivial elements of $K_3\ZZ[\zeta]$ where $\zeta^n=1$. In the case $n=2, \zeta=-1$, this picture was shown in \cite{I:dilog1} to give a generator of $K_3\ZZ=\ZZ/48$. Later, in joint work with Kent Orr \cite{IOr}, the first author used pictures to obtain new results on Milnor's $\overline\mu$ link invariants.

Pictures also appeared in representation theory and combinatorics. Harm Derksen and Jerzy Weyman used what we now call ``semi-invariant pictures'' for acyclic quivers to give a new proof of the saturation conjecture for Littlewood-Richardson coefficients \cite{DW}. This was based on the seminal work of Alastair King \cite{King} using geometric invariant theory to study representations of quivers. Around this time, Sergey Fomin and Andrei Zelevinsky invented cluster algebras \cite{FZ1}, \cite{FZ2}. Soon after that, Aslak~Bakke Buan, Robert Marsh, Markus Reineke, Idun Reiten and the second author in \cite{BMRRT} associated to any acyclic cluster algebra a ``cluster category'' whose rigid indecomposable objects correspond to the cluster variables. Following this breakthrough, a large body of knowledge has been accumulated \cite{BMR}, \cite{DWZ1}, \cite{DWZ2}, \cite{A}. This includes work of the authors, together with Kent Orr and Jerzy Weyman giving the connection between cluster theory of acyclic quivers and semi-invariant pictures in \cite{IOTW}, \cite{Modulated}, \cite{IOTW4}. 

Later, Takahide Adachi, Osamu Iyama, and Idun Reiten \cite{AIR} extended cluster theory to arbitrary finite dimensional algebras using $\tau$-rigid objects in place of cluster variables. In \cite{Modulated} the real Schur roots of an acyclic valued quiver are shown to be $c$-vectors for the associated cluster algebra and labels for the ``walls'' $D(\beta)$ for the ``wall-and-chamber'' structure for hereditary algebras (given by the semi-invariant pictures) in \cite{Modulated}, \cite{IOTW4}. In \cite{BST}, Thomas Br\"ustle, David Smith and Hipolito Treffinger extend the ``wall-and-chamber'' structure to arbitrary finite dimensional algebras using $\tau$-tilting theory and the space of stability conditions following Tom Bridgeland \cite{Br}.

Maximal green sequences were invented by Bernhard Keller to study Kontsevich-Soebelman's version \cite{KS1}, \cite{KS2} of the Donaldson-Thomas invariants. What we call ``linear'' maximal green sequences in \cite{Linearity1} were earlier used by Markus Reineke for similar formulas in \cite{Reineke}. Originally, a maximal green sequence was defined to be a sequence of ``green'' mutations of an initial ``seed'' for a cluster algebra, a mutation in the $k$-direction being ``green'' if the $k$-th $c$-vector is positive. However, here we use the representation theoretic version where $c$-vectors $\beta_i$ are replaced with ``wall-crossing'' through the walls $D(\beta_i)$. See \cite{Linearity1}, \cite{Linearily2} for a detailed explanation of how these and other versions of the definition of maximal green sequences are related. 

One of the big questions which we would like to understand is the conjecture that there are only finitely many maximal green sequences (possibly none) for any exchange matrix. In particular, this is still not known for arbitrary acyclic exchange matrices. These correspond to maximal wall crossing sequences in the wall-and-chamber structure for a hereditary algebra. This version of the conjecture has been verified for tame hereditary algebras by Thomas Br\"ustle, Gregoire Dupont and Matthieu P\'erotin \cite{BDP} and extended to cluster-tilted algebras of tame type by Thomas Br\"ustle, Stephen Hermes and the authors in \cite{BHIT}.

In the present paper we have two objectives. The first is to devise a method for attacking hereditary algebras of infinite type by looking at a finite ``admissible'' set of exceptional modules $M_\beta$ where $\beta\in\cS$, an admissible sequence of roots. (We recall in the Appendix the definition of exceptional modules and the fact that they are uniquely determined by their dimension vectors which are real Schur roots $\beta$.) The idea is to study maximal green sequences, which are given by sequences of wall crossing, by looking only at the subsequence of those walls $D(\beta)$ where $\beta\in \cS$. Such subsequences are examples of ``maximal $\cS$-green sequences'' (Definition \ref{def: MGS}) that we consider in greater generality in this paper. Since a maximal green sequence cannot cross the same wall twice \cite{BHIT}, these subsequences will be bounded in length by the size of $\cS$. The same holds for more general maximal $\cS$-green sequences by Remark \ref{rem: only finitely many S-MGS}.

The second objective is to determine exactly which such sequences will occur using the ``picture group.'' The main theorem of this paper, Theorem \ref{thm A} is that, for admissible $\cS$, maximal $\cS$-green sequences are in bijection with positive expressions for the ``Coxeter element'' $c_\cS$ in the ``picture group'' $G(\cS)$ (Definition \ref{def: picture group G(S)}). We observe that there is at least one maximal $\cS$-green sequence given by the ``Coxeter green sequence'' (Remark \ref{rem: Coxeter path}). Also, there are only finitely many maximal $\cS$-green sequences by Remark \ref{rem: only finitely many S-MGS}.

Section 1 has the definitions of ``admissible'' and ``weakly admissible'' sets of real Schur roots which are ``laterally'' and ``(weakly) vertically ordered'' sets of roots. These are concepts introduced in this paper for the purpose of using finite methods to study infinite sets of representations. 

Subsection \ref{ss1.2: statement of results} has a statement of the main results Theorem \ref{thm A} and Corollary \ref{cor B}. In subsection \ref{ss1.3: outline of proof of main theorem} an outline of the proof of Theorem \ref{thm A} is given using three lemmas \ref{lem C: beta m occurs a fixed number of times}, \ref{lem D: properties of compartments used in proof}, \ref{lem E: beta m only commutes with hom orthogonal roots}.

Section \ref{sec2: Compartments} contains a discussion of ``compartments''. Whereas the union of all the walls $D(\beta)$ divides Euclidean space $\RR^n$ into possibly infinitely many ``chambers'', since we consider  only finitely many walls, $D(\beta)$ for $\beta\in \cS$, the complement of these walls has only finitely many regions which we call ``compartments''. In subsection \ref{ss2.3: proof of lemma D} we prove Lemma \ref{lem D: properties of compartments used in proof} which describes the sequence of compartments in a maximal $\cS$-green sequence.

The theory of pictures and picture groups is explained in Section \ref{sec3: Pictures}. The main theorem for pictures is the ``Atomic Deformation Theorem'' (Theorem \ref{Atomic Deformation theorem}) which says that every picture for the picture group $G(\cS)$ has an ``atomic deformation'' to the empty picture, equivalently, any picture is equivalent to a disjoint union of ``atoms''. This idea comes from \cite{IOr} where a similar theorem is proved leading to results in topology. The Atomic Deformation Theorem is used to prove the last two lemmas \ref{lem C: beta m occurs a fixed number of times} and \ref{lem E: beta m only commutes with hom orthogonal roots} completing the proof of the main result.

Section \ref{ss4} is the Appendix which reviews the well-known properties of exceptional sequences, real Schur roots and wide subcategories used in this paper.
 
\section{Introduction}\label{sec: Introduction}
 
 In this section we give the basic definitions used in this paper which are the concepts of ``vertical'' (Def. \ref{def: vertical ordering}) and ``lateral'' orderings (Def. \ref{def: lateral ordering}) of real Schur roots and ``admissible'' sequences of such roots (Def. \ref{def: admissible sequence}). We define ``pictures'' (Fig. \ref{Figure99}) for the ``picture group'' $G(\cS)$ for $\cS$ (Def. \ref{def: picture group G(S)}) and we define ``maximal $\cS$-green sequences'' (Def. \ref{def: MGS}). We also give the statements of the main theorems \ref{thm A}, \ref{cor B} and an outline of the proofs using three lemmas about pictures \ref{lem C: beta m occurs a fixed number of times}, \ref{lem D: properties of compartments used in proof}, \ref{lem E: beta m only commutes with hom orthogonal roots}. Corollary \ref{cor B} is the special case of Theorem \ref{thm A} for a hereditary algebra of finite representation type with the admissible set of roots being all positive roots of the root system. In this special case, we obtain a group theoretic description of all maximal green sequences. We believe that, using $\tau$-tilting theory, analogous statements can be obtained for any finite dimensional algebra in particular those which are $\tau$-tilting finite. However, in this paper, all quivers will be valued quivers without oriented cycles, possibly of infinite type. See the Appendix for basic background material on exceptional modules and real Schur roots.
 
\subsection{Basic definitions}\label{ss1.1: basic definitions} We assume that $Q$ is a valued acyclic quiver and we always consider subsets $\cS$ of the set of positive real Schur roots of $Q$. 
The positive real Schur roots are precisely the dimension vectors of the exceptional modules over any modulated quiver with underlying valued quiver $Q$ \cite{Modulated}. We need to order the roots, in these subsets $\cS$, in two different ways which we call ``lateral'' and ``vertical'' ordering. 

\begin{notation}
Let $Q$ be a modulated quiver and $\beta$ a positive real Schur root. 
\begin{enumerate}
\item 
We will denote by $M_{\beta}$ the unique exceptional module with dimension vector equal to $\beta$. Exceptional modules are always indecomposable.
\item A positive real Schur root $\beta'$ will be called  a \und{subroot} of $\beta$ if the exceptional module $M_{\beta'}$ is isomorphic to a submodule of $M_{\beta}$. This is denoted by $\beta'\subset \beta$.
\item A positive real Schur root $\beta''$ will be called  a \und{quotient root} of $\beta$ if the exceptional module $M_{\beta''}$ is isomorphic to a quotient  of $M_{\beta}$.  This is denoted by $\beta\twoheadrightarrow \beta''$.
\end{enumerate}
\end{notation} 

 \begin{defn}\label{def: lateral ordering}
By a \und{lateral ordering} $\le $ on a set of real Schur roots $\cS$ we mean a total ordering on $\cS$ satisfying the following for any $\alpha,\beta\in\cS$.
\begin{enumerate}
\item If $hom(\alpha,\beta)\neq0$ then $\alpha\le\beta$, where $hom(\alpha,\beta)=\dim_K\Hom_\Lambda(M_\alpha,M_\beta)$. 
\item If $ext(\alpha,\beta)\neq0$ then $\alpha>\beta$, where $ext(\alpha,\beta)=\dim_K\Ext_\Lambda(M_\alpha,M_\beta)$.
\end{enumerate}
\end{defn}

\begin{rem}  We now state several basic facts and some examples 
 of $\cS$ with lateral ordering and some $\cS$ which do not admit such ordering.
 \begin{enumerate}
\item If $\cS$  has lateral ordering then  for all $\alpha, \beta \in \cS$, either $hom(\alpha,\beta)=0$ or $ext(\alpha,\beta)=0$.
\item The left-to-right order of preprojective roots as they occur in the Auslander-Reiten quiver, together with any ordering on the summands of the middle term of each almost split sequence,  is a lateral ordering. (This is any total ordering on this set of roots so that $\alpha<\beta$ whenever there is a irreducible map $M_\alpha\to M_\beta$.)
\item The set af all regular roots does not admit a lateral ordering.
\item The simple roots can always be laterally ordered by taking $\alpha_i<\alpha_j$ whenever there is an arrow $j\to i$ in the quiver.\item If $\omega$ is a rightmost root in $\cS$ in lateral order then $ext(\beta,\omega)=0$ for all $\beta\in\cS$ and $hom(\omega,\beta')=0$ for all $\beta'\neq\omega\in\cS$.
\item Any subset of a laterally ordered set of roots is laterally ordered with the same ordering.
 \end{enumerate}
 \end{rem}

 \begin{defn}\label{def: vertical ordering}
 A sequence of real Schur roots $\cS=(\beta_1,\cdots,\beta_m)$ is said to be \und{vertically ordered} if the following two conditions are satisfied for each $\beta_k\in \cS$.
 \begin{enumerate}
 \item  Let $\beta'\subset \beta_k$ be any (positive real Schur) subroot of $\beta_k$. Then $\beta'=\beta_j$ for some $j\le k$.
 \item Let $\beta''\!\!\twoheadleftarrow \!\!\beta_k$ be a (positive real Schur) quotient-root of $\beta_k$. Then $\beta''\!=\!\beta_j$ for some $j\le k$. 
 \end{enumerate}
The sequence $\cS$ is \und{weakly vertically ordered} if,  for each $\beta_k\in \cS$, at least one of the above two conditions is satisfied.
   \end{defn}

\begin{rem}A finite set of positive real Schur roots which is closed under subroots and quotient roots can be vertically ordered if the roots are ordered by length, and the roots of the same length are ordered in any way. 
\end{rem}

\begin{defn}\label{def: admissible sequence}
Let $\cS$ be a finite set of positive real Schur roots. 
  \begin{enumerate}
  \item $\cS$ is called \und{admissible} if it has a lateral and a vertical ordering
   \item $\cS$ is called \und{weakly admissible} if it has a lateral ordering and a weakly vertical ordering
  \item an \und{admissible sequence} is an admissible set listed in its vertical ordering
    \item a \und{weakly admissible sequence} is a  weakly admissible set listed in its weakly vertical ordering
   \end{enumerate}
 \end{defn}

\begin{rem}\label{rem: properties of admissible sequences}
 Let $(\beta_1, \dots,\beta_m)$ be an admissible sequence (of positive real Schur roots). The following will be crucial for the induction steps in the proofs.
  \begin{enumerate}
\item If one of the $\beta_i$'s is removed, the sequence 
$(\beta_1, \dots\widehat{\beta_i}\dots,\beta_m)$ is a weakly admissible sequence (not necessarily admissible sequence). 
\item If  the last element $\beta_m$ is removed then the resulting sequence will still be an admissible sequence.
  \end{enumerate} 
\end{rem}

\noi The notion of ``picture groups" for hereditary artin algebras of finite representation type was introduced in \cite{IOTW4} as groups associated to the ``semi-invariant pictures". These groups were defined using all positive roots (the algebras were of finite representation type). We now give a more general definition of ``picture groups", using admissible subsets of positive real Schur roots for all finite dimensional hereditary algebras.

\begin{defn} Let $\cS$ be an admissible set of (positive real Schur) roots.
We will call a subset $\cR\subseteq\cS$ \underline{relatively closed} if $\cR$ is closed under extensions in $\cS$.
\end{defn}

\noi Relatively closed subsets $\cR\subset \cS$ of admissible sets, have ``picture groups'', which we now define.

\begin{defn}\label{def: picture group G(S)}
For any relatively closed subset $\cR\subseteq \cS$ of an admissible set of roots $\cS$, we define the \und{picture group} $G(\cR)$ as follows. There is one generator $x(\beta)$ for each $\beta\in\cR$. There is the following relation for each pair $\beta_i,\beta_j$ of hom-orthogonal roots with $ext(\beta_i,\beta_j)=0$:
\begin{equation}\label{eq: commutator relation}
	x(\beta_i)x(\beta_j)=\prod x(\gamma_k)
\end{equation}
where $\gamma_k$ runs over all roots in $\cR$ which are linear combinations $\gamma_k=a_k\beta_i+b_k\beta_j, a_k,b_k\in\ZZ_{\ge0}$ in increasing order of the ratio $a_k/b_k$ (going from $0/1$ where $\gamma_1=\beta_j$ to $1/0$ where $\gamma_k=\beta_i$). In particular, $x(\alpha), x(\beta)$ commute when $\alpha,\beta$ are both hom-orthogonal and ext-orthogonal. For any $g\in G(\cR)$, we define a \und{positive expression} for $g$ to be any word in the generators $x(\beta)$ (with no $x(\beta)^{-1}$ terms) whose product is $g$.
\end{defn}

\begin{rem}\label{rem: G(R) is a functor}
(a) Note that $G(\cR)$ is independent of the choice of $\cS$. However, the existence of an admissible $\cS$ containing $\cR$ is important. Also, by the well-known Theorem \ref{thm: wide subcategory gen by hom-orthogonal roots}, each $\gamma_k=a_k\beta_i+b_k\beta_j$ has $\beta_i$ as a subroot and $\beta_j$ as a quotient root if $\beta_i,\beta_j$ are hom-orthogonal with $ext(\beta_i,\beta_j)=0$.

(b) Whenever $\cR\subset \cR'$ are relatively closed subsets of an admissible set $\cS$ we get a homomorphism of groups $G(\cR)\to G(\cR')$ since any relation among the generators of $G(\cR)$ is also a relation among the corresponding generators of $G(\cR')$.

(c) Definition \ref{def: picture group G(S)} is a generalization of the notion of ``picture groups'' for hereditary artin algebras of finite representation type as defined in \cite{IOTW4}. Indeed, the \underline{picture group} $G(\Lambda)$ for such an algebra is, by definition, equal to the picture group $G(\Phi^+(\Lambda))$ for the set $\Phi^+(\Lambda)$ of all positive roots of $\Lambda$. These roots are vertically ordered by dimension and laterally ordered by their position in the Auslander-Reiten quiver of $\Lambda$, i.e., there exists a lateral ordering so that, for any irreducible map $M_\alpha\to M_\beta$, $\alpha<\beta$ in lateral order.
\end{rem}

\begin{defn}
Given $\cS$ admissible, we define the \und{Coxeter element} $c_\cS$ of $G(\cS)$ to be the product of the generators $x(\alpha_i)$ for all simple roots $\alpha_i\in\cS$ in lateral order, i.e., so that $\alpha_i<\alpha_j$ whenever there is an arrow $i\leftarrow j$ in the quiver of the algebra. 
\end{defn}

\begin{rem}
As an element of the picture group $G(\cS)$, this product $c_\cS=\prod x(\alpha_i)$ is independent of the choice of the lateral ordering. This is because one can pass from any lateral ordering to any other by transposing consecutive generators $x(\alpha_i),x(\alpha_j)$ when there is no arrow between them in the quiver. But in that case, $x(\alpha_i),x(\alpha_j)$ commute. So, the product remains invariant.
\end{rem}

\begin{eg}\label{eg: A3 example}
Consider the quiver of type $A_3$ with straight orientation: $1\leftarrow 2\leftarrow 3$. The Auslander-Reiten quiver, with modules on the left and corresponding roots on the right is:
\[
\xymatrixrowsep{10pt}\xymatrixcolsep{10pt}
\xymatrix{%begin xy matrix
&& P_3\ar[dr] &&&&&&& \alpha_6\ar[dr] \\
&P_2\ar[dr]\ar[ur]\ar@{--}[rr] && I_2\ar[dr] &&&&&\alpha_4\ar[ur]\ar[dr]\ar@{--}[rr] && \alpha_5\ar[dr]
\\
S_1 \ar[ur]\ar@{--}[rr]&&S_2\ar[ur]\ar@{--}[rr] && S_3 &&&  
	\alpha_1 \ar[ur]\ar@{--}[rr]&&\alpha_2 \ar[ur]\ar@{--}[rr]&&
	\alpha_3
	}%end xy matrix
\]
The set $\cS=(\alpha_1,\alpha_2,\alpha_4,\alpha_3)$ is vertically ordered since the subroot $\alpha_1$ and quotient root $\alpha_2$ of $\alpha_4$ come before it. The set $\cS$ is admissible since it also has a lateral ordering $\alpha_1<\alpha_4<\alpha_2<\alpha_3$. The subsequence $\cS'=(\alpha_1, \alpha_4,\alpha_3)$ is weakly admissible. Also, $\cS'$ is relatively closed in $\cS$ since the missing element is simple.

The picture group $G(\cS)$ has four generators $x(\alpha_1), x(\alpha_2), x(\alpha_3), x(\alpha_4)$ and four relations given by the four pairs of hom-orthogonal roots: 
\begin{enumerate}
\item $x(\alpha_1)x(\alpha_2)=x(\alpha_2)x(\alpha_4)x(\alpha_1)$ from the extension $\alpha_1\cof \alpha_4\onto \alpha_2$.
\item $x(\alpha_2)x(\alpha_3)=x(\alpha_3)x(\alpha_2)$ since the extension $\alpha_5$ of $\alpha_2$ by $\alpha_3$ is not in $\cS$.
\item $x(\alpha_1)x(\alpha_3)=x(\alpha_3)x(\alpha_1)$ since $\alpha_1,\alpha_3$ do not extend each other.
\item $x(\alpha_4)x(\alpha_3)=x(\alpha_3)x(\alpha_4)$ since $\alpha_6\notin\cS$.
\end{enumerate}
Thus $x(\alpha_3)$ is central. (This follows from the fact that $\alpha_3$ is last in both vertical and lateral orderings.) The picture group $G(\cS')$ has generators $x(\alpha_1), x(\alpha_3), x(\alpha_4)$ modulo the relation that $x(\alpha_3)$ is central. The Coxeter element of $G(\cS)$ is
\[
	c_\cS=x(\alpha_1)x(\alpha_2)x(\alpha_3).
\]
\end{eg}

\begin{rem}\label{rem: commutator in lateral order}
If $\beta_i,\beta_j$ are hom-orthogonal and $\beta_i<\beta_j$ in lateral order then 
\[
	x(\beta_i)x(\beta_j)=x(\beta_j)w
\]
where $w$ is a positive expression in letters $\gamma$ where $\beta_i\le\gamma<\beta_j$ in lateral order since $hom(\beta_i,\gamma)\neq0$ and $hom(\gamma,\beta_j)\neq 0$ when $\gamma\neq\beta_i$.
\end{rem}

An important case is when $j=m$, the size of $\cS$. For $\beta_i,\beta_m$ hom-orthogonal we get
\[
	x(\beta_i)x(\beta_m)=x(\beta_m)x(\beta_i).
\]
since the other roots $\gamma_k$ in the formula above would come after $\beta_m$ so do not lie in $\cS$.

We recall that, for all roots $\beta$, there is a unique exceptional module $M_\beta$ with dimension vector $\beta$. The subset $D(\beta)\subseteq \RR^n$ is given by
\[
	D(\beta)=\{x\in\RR^n: \brk{ x,\beta} =0\text{ and } \brk{ x,\beta'} \le0\ \forall \beta'\subset \beta\}
\]
where $\beta'\subset\beta$ means that $M_\beta$ contains an exceptional submodule isomorphic to $M_{\beta'}$. The inner product $\brk{x,\beta}$ is the weighted dot product $\brk{x,\beta}=\sum x_ib_if_i$ where $x_i,b_i$ are the $i$th coordinates of $x,\beta$ and $f_i=\dim_K\End(S_i)$ where $S_i$ is the $i$th simple module. So, $D(\beta)$ does not contain points in $\RR^n$ all of whose coordinates are positive (or negative). For more details see Appendix \ref{ss4}.

Theorem \ref{thm: equivalent definitions of D(b)} in the Appendix proves that $D(\beta)$ has the following equivalent description.
\[
	D(\beta)=\{x\in\RR^n: \brk{ x,\beta} =0\text{ and } \brk{ x,\beta''} \ge0\ \forall \beta''\twoheadleftarrow \beta\}
\]
where $\beta''\twoheadleftarrow \beta$ means that $M_\beta$ has an exceptional quotient module isomorphic to $M_{\beta''}$.

Given $\cS$ a weakly admissible sequence of roots, let $CL(\cS)\subset\RR^n$ denote the union of $D(\beta)$ for all $\beta\in\cS$. Since this set is invariant under scaling in the sense that $\lambda CL(\cS)=CL(\cS)$ for all $\lambda>0$, we usually consider just the intersection $L(\cS):=CL(\cS)\cap S^{n-1}$. The \und{semi-invariant picture} for $G(\cS)$ is defined to be this set $L(\cS)\subset S^{n-1}$ together with the labels of its walls by positive roots and the normal orientation of each wall $D(\beta)$ telling on which side the vector $\beta$ lies. When $n=3$, we draw the stereographic projection of this set onto the plane. (Projecting away from the negative octant. See Figures \ref{Figure99} and \ref{Figure weakly}.)

\begin{defn} Given 
$\cS=(\beta_1,\cdots,\beta_m)$ weakly admissible and $\epsilon=(\epsilon_1,\cdots,\epsilon_m)\in \{0,+,-\}^m$. 
\begin{enumerate}
\item Define $\cU_\epsilon$ to be the convex open set given by
\[
	\cU_\epsilon=\{x\in\RR^n: \brk{ x,\beta_i} >0\text{ if }\epsilon_i=+\text{ and }\brk{ x,\beta_j} <0\text{ if }\epsilon_j=-\}.
\]
\item $\epsilon$ will be called \und{admissible} (with respect to $\cS$) if for all $1\le k\le m$ we have:
\[
	\epsilon_k=0 \iff D(\beta_k)\cap\cU_{\epsilon_1,\cdots,\epsilon_{k-1}}=\emptyset.
\]
\item When $\epsilon$ is admissible the open set $\cU_\epsilon$ will be called an \und{$\cS$-compartment}. See Fig. \ref{Figure99}, \ref{Figure weakly}.
\end{enumerate}
\end{defn}

In Proposition \ref{prop: weakly admissible S gives convex compartments} below we show that, for $\cS$ weakly admissible, each compartment $\cU_\epsilon$ is open and convex and these regions form the components of the complement of $CL(\cS)$ in $\RR^n$.

\begin{figure}[htbp]
\begin{center}
\begin{tikzpicture}[scale=.75]
%\draw[help lines=1,thick] (-5,-5) grid (15,4);
%\foreach \x in {-8,-6,...,8}\draw (\x,0) node{\x};\foreach \y in {-6,-4,...,8}\draw (0,\y) node{\y};
%\draw[very thick, color=blue] (5,-5)--(5,4);
%\draw[very thick, color=blue] (9,-5)--(9,4);
\begin{scope}
\coordinate (A) at (-2,0);
\coordinate (B) at (1,0);
\coordinate (C) at (-.8,0);
\coordinate (D1) at (-.8,-3);
\coordinate (D2) at (3,3);
\coordinate (X1) at (-3,0);
\coordinate (X2) at (-1,0);
\coordinate (X3) at (1,0);
\coordinate (X4) at (3,0);
\coordinate (X5) at (-4,-3);
\draw (-3.5,2.5) node{$D(\alpha_1)$};
\draw (3,2.5) node{$D(\alpha_2)$};
\draw (2.2,1.3) node{$D(\alpha_4)$};
\draw (X1) node{$\cU_{+-0}$};
\draw (X2) node{$\cU_{++0}$};
\draw (X3) node{$\cU_{-++}$};
\draw (X4) node{$\cU_{-+-}$};
\draw (X5) node{$\cU_{--0}$};
\draw[very thick] (A) ellipse[x radius=2cm, y radius=2.5cm];
\draw[very thick] (B) ellipse[x radius=3cm, y radius=2.5cm];
\clip (D1) rectangle (D2);
\draw[very thick] (C) ellipse[x radius=2.9cm, y radius=2cm];
\end{scope}
\begin{scope}[xshift=10cm]
\coordinate (A) at (-2,0);
\coordinate (B) at (1,0);
\coordinate (C) at (-.8,0);
\coordinate (D1) at (-.8,-3);
\coordinate (D2) at (3,3);
\coordinate (E) at (0,-2);
\draw (3.5,-3.7) node{$D(\alpha_3)$};
\begin{scope}[yshift=5mm,xshift=0mm]
\coordinate (X1) at (-3,0);
\coordinate (X2) at (-1,0);
\coordinate (X3) at (1,0);
\coordinate (X4) at (3,0);
\coordinate (X5) at (0,-3.5);
\draw (X1) node{$\cU_{+-0-}$};
\draw (X2) node{$\cU_{++0-}$};
\draw (X3) node{$\cU_{-++-}$};
\draw (X4) node{$\cU_{-+--}$};
\draw (X5) node{$\cU_{--0+}$};
\end{scope}
\begin{scope}[yshift=-6mm,xshift=0mm]
\coordinate (X1) at (-2.5,-.7);
\coordinate (X2) at (-1,0);
\coordinate (X3) at (1,0);
\coordinate (X4) at (2.5,-.7);
\coordinate (X5) at (-4,-3);
\draw (X1) node{$\cU_{+-0+}$};
\draw (X2) node{$\cU_{++0+}$};
\draw (X3) node{$\cU_{-+++}$};
\draw (X4) node{$\cU_{-+-+}$};
\draw (X5) node{$\cU_{--0-}$};
\end{scope}
\draw[very thick] (A) ellipse[x radius=2cm, y radius=2.5cm];
\draw[very thick] (B) ellipse[x radius=3cm, y radius=2.5cm];
\draw[very thick] (E) ellipse[x radius=4cm, y radius=2cm];
\clip (D1) rectangle (D2);
\draw[very thick] (C) ellipse[x radius=2.9cm, y radius=2cm];
\end{scope}
\end{tikzpicture}
\caption{On the left is the semi-invariant picture $L(\cS_0)$ for the admissible subsequences $\cS_0=(\alpha_1,\alpha_2,\alpha_4)$ of $\cS$ from Example \ref{eg: A3 example}. $L(\cS_0)$ is a subset of $S^2\subset\RR^3$. Thus, e.g., $D(\alpha_i)$ are actually coordinate hyperplanes. The $\cS_0$-compartments are the components of the complement of $L(\cS_0)$. For example, $\cU_{++0}=\cU_{++}$ is the region on the positive side of the two hyperplanes $D(\alpha_1),D(\alpha_2)$. $\cU_{-++}$ is the set of point in $\cU_{-+}$ on the positive side of $D(\alpha_4)$.
On the right, the wall $D(\alpha_3)$ cuts all five $\cS_0$-compartments in half giving the semi-invariant picture for $\cS=(\alpha_1,\alpha_2,\alpha_4,\alpha_3)$ with ten compartments.
}
\label{Figure99}
\end{center}
\end{figure}

\begin{defn}\label{def: MGS}
For any weakly admissible $\cS$, we define a \und{maximal $\cS$-green sequence} (of length $s$) to be a sequence of $\cS$-compartments $\cU_{\epsilon(0)},\cdots,\cU_{\epsilon(s)}$ satisfying the following.
\begin{enumerate}
\item Every pair of consecutive compartments $\cU_{\epsilon(i-1)},\cU_{\epsilon(i)}$ is separated by a wall $D(\beta_{k_i})$ so that $\epsilon(i-1)_{k_i}=-$ and $\epsilon(i)_{k_i}=+$ and $\epsilon(i-1)_j=\epsilon(i)_j$ for all $j< k_i$. 
\item $\cU_{\epsilon(0)}$ is the compartment containing vectors all of whose coordinates are negative.
\item $\cU_{\epsilon(s)}$ is the compartment containing vectors all of whose coordinates are positive.
\end{enumerate} We say that $(\cU_\epsilon)$ is an \und{$\cS$-green sequence} if only the first condition is satisfied.
We define an \und{$\cS$-green path} representing the $\cS$-green sequence $(\cU_\epsilon)$ to be a continuous path, $\gamma:\RR\to\RR^n$, so that, for some $t_1<t_2<\cdots<t_s$ we have the following
\begin{enumerate}
\item $\gamma(t)\in \cU_{\epsilon(0)}$ when $t<t_1$
\item $\gamma(t)\in \cU_{\epsilon(s)}$ when $t>t_s$
\item $\gamma(t)\in \cU_{\epsilon(i)}$ for $0<i<s$ whenever $t_i<t<t_{i+1}$
\item For $1\le i\le s$, $\gamma(t)$ goes from the negative side to the positive side of $D(\beta_{k_i})$ for some $\beta_{k_i}\in\cS$ when $t$ crosses the value $t_i$.
\end{enumerate}
The word ``maximal'' may be misleading. (See Figure \ref{Figure weakly}.)
\end{defn}

\begin{figure}[htbp]
\begin{center}
\begin{tikzpicture}[scale=.75]
\begin{scope}%[xshift=10cm]
\coordinate (A) at (-2,0);
\coordinate (B) at (1,0);
\coordinate (C) at (-.8,0);
\coordinate (D1) at (-.8,-3);
\coordinate (D2) at (3,3);
\coordinate (E) at (0,-2);
\draw (-4.7,0.5) node{$D(\alpha_1)$};
\draw (1.5,1.8) node{$D(\alpha_4)$};
\draw (3.5,-3.7) node{$D(\alpha_3)$};
\begin{scope}[yshift=5mm,xshift=0mm]
\coordinate (X1) at (-3,0);
\coordinate (X2) at (-2,0);
\coordinate (X3) at (1,0);
\coordinate (X4) at (-4,2);
\draw (X2) node{$\cU_{+0-}$};
\draw (X3) node{$\cU_{-+-}$};
\draw (X4) node{$\cU_{---}$};
\end{scope}
\begin{scope}[yshift=-6mm,xshift=0mm]
\coordinate (X1) at (-2.5,-.7);
\coordinate (X2) at (-1.8,-.5);
\coordinate (X3) at (1,0);
\coordinate (X4) at (2.5,-.7);
\draw (X2) node{$\cU_{+0+}$};
\draw (X3) node{$\cU_{-++}$};
\draw (X4) node{$\cU_{--+}$};
\end{scope}
\draw[very thick] (A) ellipse[x radius=2cm, y radius=2.5cm];
\draw[very thick] (E) ellipse[x radius=4cm, y radius=2cm];
\draw[thick,color=green!60!black,->] (-2,3)--(-1,-.5);
\clip (D1) rectangle (D2);
\draw[very thick] (C) ellipse[x radius=2.9cm, y radius=2cm];
\draw[ thick,color=green,dashed,->] (1,3)..controls (-.7,2) and (-.6,0)..(-.5,-.9);
\end{scope}
\end{tikzpicture}
\caption{Semi-invariant picture $L(\cS')$ for the weakly admissible sequence $\cS'=(\alpha_1,\alpha_4,\alpha_3)$ from Example \ref{eg: A3 example}. The  solid green arrow indicates an $\cS'$-green path giving the maximal $\cS'$-green sequence $\cU_{---},\cU_{+0-},\cU_{+0+}$. Note that the dashed green arrow indicates another $\cS'$-green path giving the maximal $\cS'$-green sequence 
$\cU_{---}, \cU_{-+-}$, $\cU_{+0-}, \cU_{+0+}$. So, ``maximal'' is a misnomer when $\cS'$ is only weakly admissible. Also, $L(\cS')$ is not a ``planar picture'' for $G(\cS')$ as defined in Section \ref{sec3: Pictures} since $\cS'$ is not admissible.
}
\label{Figure weakly}
\end{center}
\end{figure}

\begin{rem}\label{rem: Coxeter path}
The green arrow in Figure \ref{Figure weakly} is an example of a ``{Coxeter path}'' which is given more generally as follows. Let $\cS$ be an admissible set of roots. Let $\alpha_1,\cdots,\alpha_k$ be the simple roots in $\cS$ in any lateral order. In other words, any arrow between the corresponding vertices in the quiver go from $\alpha_j$ to $\alpha_i$ only when $i<j$. Also, all roots in $\cS$ have support at these vertices by definition of an admissible set. The corresponding \und{Coxeter path} is defined to be the linear path $\gamma: \RR \to \RR^n$
\[
	\gamma(t)=(t,t,\cdots,t)-\sum j\alpha_j.
\]
This path crosses the hyperplanes $D(\alpha_i)$ at time $t=i$ (in the order $\alpha_1,\alpha_2,\cdots$) and it passes from the negative to the positive side of each hyperplane. 

Also, this path is disjoint from all other walls $D(\beta)$ for $\beta\in \cS$ not simple. To see this, suppose $\gamma(t_0)\in D(\beta)$. Then
\[
	\brk{\gamma(t_0),\beta}=0=\sum (t_0-j) f_jb_j.
\]
So, some of the coefficients $t_0-j$ are positive and some are negative with the positive ones coming first, say $t_0-1,t_0-2,\cdots,t_0-p$ positive and the rest negative. In that case $\beta'=b_1\alpha_1+b_2\alpha_2+\cdots+b_p\alpha_p$ is a sum of subroots of $\beta$, but $\brk{\gamma(t_0),\beta'}>0$ which contradicts the assumption that $\gamma(t_0)\in D(\beta)$. So, the Coxeter path does not meet any $D(\beta)$ for $\beta\in \cS$ not simple. Thus, the Coxeter path is an $\cS$-green path. Since the coordinates of $\gamma(t)$ are all negative for $t<<0$ and all positive for $t>>0$, this green path gives a maximal $\cS$-green sequence which we call the \ul{Coxeter green sequence}. The product of the group labels on the walls crossed by this green sequence form the Coxeter element $c_\cS=x(\alpha_1)x(\alpha_2)\cdots x(\alpha_k)\in G(\cS)$.
\end{rem}

\subsection{Statement of the main results}\label{ss1.2: statement of results}

The main property of $\cU_\epsilon\subset \RR^n$ is that it is convex and nonempty when $\cS$ is weakly admissible and $\epsilon$ is admissible with respect to $\cS$ (Proposition \ref{prop: weakly admissible S gives convex compartments}). Furthermore, when $\cS$ is admissible, the complement of the union of these regions forms a ``picture'' for the picture group $G(\cS)$. The precise statement is as follows.

\begin{thm}\label{thm: L(S) is picture for picture group G(S)}
When $\cS$ is admissible, each $\cS$-compartment $\cU_\epsilon$ can be labelled with an element of the picture group $g(\epsilon)\in G(\cS)$ so that, if $\cU_\epsilon$ and $\cU_{\epsilon'}$ are separated by a wall $D(\beta)$, $\beta\in \cS$, with $\cU_{\epsilon'}$ on the positive side of $D(\beta)$, then
\begin{equation}\label{eq: labels for wall crossing}
	g(\epsilon)x(\beta)=g(\epsilon').
\end{equation}
\end{thm}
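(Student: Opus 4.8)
The plan is to define the labels $g(\epsilon)$ by a holonomy (``developing map'') construction and to verify the relation \eqref{eq: labels for wall crossing} by reducing its consistency to a local computation at the codimension-two cells of $CL(\cS)$. Fix as base point the compartment $\cU_{\epsilon(0)}$ all of whose points have negative coordinates (a genuine $\cS$-compartment by Proposition \ref{prop: weakly admissible S gives convex compartments}) and set its label to be the identity of $G(\cS)$. Given any compartment $\cU_\epsilon$, I would join it to $\cU_{\epsilon(0)}$ by a path $\gamma$ meeting $CL(\cS)$ only transversally, in the interiors of walls, and avoiding the codimension-two skeleton $\bigcup D(\beta)\cap D(\beta')$. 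Reading the crossings of $\gamma$ from the base end, define $g_\gamma(\epsilon)$ to be the ordered product in which each passage of a wall $D(\beta)$ from its negative to its positive side contributes a right factor $x(\beta)$, and each passage in the opposite sense contributes $x(\beta)^{-1}$. With this convention, if $\cU_\epsilon,\cU_{\epsilon'}$ are separated by $D(\beta)$ with $\cU_{\epsilon'}$ on the positive side, then extending a path for $\cU_\epsilon$ across $D(\beta)$ produces a path for $\cU_{\epsilon'}$; hence \eqref{eq: labels for wall crossing} holds tautologically once $g(\epsilon):=g_\gamma(\epsilon)$ is shown to be independent of $\gamma$.

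Well-definedness is the heart of the matter. Since $\RR^n$ is contractible, any two admissible paths $\gamma_0,\gamma_1$ with the same endpoints are homotopic rel endpoints, and I would take the homotopy $H$ generic with respect to the (finite) stratification of $CL(\cS)$. Its two-dimensional image then misses all strata of codimension at least three and meets the codimension-two skeleton in finitely many points; away from these the product $g_\gamma(\epsilon)$ is unchanged, since pushing $\gamma$ across a wall and back inserts only a cancelling pair $x(\beta)x(\beta)^{-1}$, and sliding a crossing within a single wall changes nothing. Thus $g_{\gamma_0}(\epsilon)=g_{\gamma_1}(\epsilon)$ will follow once I prove that the monodromy word picked up along a small loop around each codimension-two cell is trivial in $G(\cS)$. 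This is precisely where the defining relations \eqref{eq: commutator relation} are forced into play.

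The key lemma, and the step I expect to be the main obstacle, is the local description of $CL(\cS)$ near a codimension-two cell $\cC$. Using the inner-product description of $D(\beta)$ together with Theorem \ref{thm: wide subcategory gen by hom-orthogonal roots}, I would show that the walls of $\cS$ whose closures contain $\cC$ are exactly the $D(\gamma)$ with $\gamma\in\cS$ of the form $\gamma=a\beta_i+b\beta_j$ for a fixed pair of hom-orthogonal roots $\beta_i,\beta_j$ with $ext(\beta_i,\beta_j)=0$, and that in a transverse two-disk these walls are rays ordered by the ratio $a/b$, reproducing exactly the rank-two semi-invariant picture of the wide subcategory generated by $M_{\beta_i},M_{\beta_j}$. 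A small loop about $\cC$ then spells out, on one arc, the crossings giving $x(\beta_i)x(\beta_j)$ and, on the complementary arc, the ordered product $\prod x(\gamma_k)$ of \eqref{eq: commutator relation}; hence the monodromy is the relator $x(\beta_i)x(\beta_j)\bigl(\prod x(\gamma_k)\bigr)^{-1}$, trivial in $G(\cS)$. (When no intermediate $\gamma$ lies in $\cS$ this degenerates to the commutation $x(\beta_i)x(\beta_j)=x(\beta_j)x(\beta_i)$, in agreement with Remark \ref{rem: commutator in lateral order}.) Establishing this rank-two reduction — that the codimension-two local picture of the semi-invariant domains is governed entirely by the two bounding roots and their positive combinations, in the correct cyclic order — is the real work, and it is what ties the geometry of $CL(\cS)$ to the algebra of $G(\cS)$.

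The remaining points are routine. Every compartment is reachable from $\cU_{\epsilon(0)}$ because the $\cS$-compartments are exactly the open connected components of the complement of $CL(\cS)$ in the connected space $\RR^n$ (Proposition \ref{prop: weakly admissible S gives convex compartments}), so $g(\epsilon)$ is defined on all of them; genericity of $H$ is achieved by a standard transversality argument against the finite stratification; and the low-rank cases $n\le 2$, where $CL(\cS)$ is a union of rays, are checked directly by the same monodromy bookkeeping. Granting the local lemma, the wall-crossing relation \eqref{eq: labels for wall crossing} then holds for every wall by construction, completing the proof.
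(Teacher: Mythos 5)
Your proposal is correct and follows essentially the same route as the paper: the paper also defines $g(\epsilon)$ by reading wall crossings along a path from the all-negative compartment, proves independence of the path via a generic homotopy meeting only the codimension-two strata, and resolves the local monodromy there by showing (via Proposition \ref{prop: exceptional sequences are lin indep} and Corollary \ref{cor: W(x0) cap S}) that the walls through such a point are exactly the $D(\gamma)$ for $\gamma$ a nonnegative combination of two hom-orthogonal minimal roots, so the loop spells out precisely the defining relation \eqref{eq: commutator relation}. The only cosmetic difference is that the paper restricts to $\cS$-green (monotone) paths supplied by Lemma \ref{lem: all max green sequences start at neg L and go to pos L}, whereas you allow crossings in both senses with inverse letters; your ``key lemma'' is exactly the paper's Corollary \ref{cor: W(x0) cap S}.
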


Note that, given any system of compartment labels $g(\epsilon)$ satisfying \eqref{eq: labels for wall crossing}, left multiplication of all labels by a fixed element of $G(\cS)$ will preserve the condition. Therefore, we may, without loss of generality, assume that $g(\epsilon)=1$ on the negative $\cS$-compartment $\cU_\epsilon$ where all $\epsilon_i$ are negative or zero. Theorem \ref{thm: L(S) is picture for picture group G(S)} follows from the following lemma.

\begin{lem}\label{lem: all max green sequences start at neg L and go to pos L}
For $\cS$ weakly admissible, every $\cS$-compartment $\cU_\epsilon$ lies in a maximal $\cS$-green sequence given by an $\cS$-green path.
\end{lem}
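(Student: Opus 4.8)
The plan is to realize the required maximal green sequence by a single straight line traveling in the direction $\mathbf{1}=(1,1,\dots,1)$. Concretely, I would fix a point $p\in\cU_\epsilon$ and take the path $\gamma(t)=p+t\mathbf{1}$. The decisive computation is that for every $\beta\in\cS$ the function $t\mapsto\brk{\gamma(t),\beta}=\brk{p,\beta}+t\sum_i b_if_i$ is \emph{strictly increasing}, since $\brk{\mathbf{1},\beta}=\sum_i b_if_i>0$ (all $b_i\ge0$ with some $b_i>0$ because $\beta$ is a positive root, and each $f_i>0$). Hence each hyperplane $\brk{\cdot,\beta}=0$ is met exactly once and transversally, and $\gamma$ always passes from the negative to the positive side. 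For $t\ll0$ all coordinates of $\gamma(t)$ are negative and for $t\gg0$ all are positive, so $\gamma$ begins in the negative compartment and ends in the positive compartment; this yields conditions (2)--(3) of a maximal green sequence as soon as we identify which crossings are genuine wall crossings.

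Next I would arrange genericity by perturbing the basepoint. Since $\cU_\epsilon$ is open and nonempty (Proposition~\ref{prop: weakly admissible S gives convex compartments}) and each wall $D(\beta)$ is a polyhedral cone, the set of $p$ for which $\gamma$ meets two hyperplanes simultaneously, or meets a wall outside its relative interior (on some face or intersection $D(\beta)\cap D(\beta')$), is a finite union of subsets of codimension $\ge1$ in $\RR^n$; a generic $p\in\cU_\epsilon$ therefore avoids all of them while still giving $\gamma(0)=p\in\cU_\epsilon$. For such $p$ the crossing times $t^*_i$ (where $\brk{\gamma(t),\beta_i}=0$) are distinct, and at each $t^*_i$ either $\gamma(t^*_i)\in D(\beta_i)$, so $\gamma$ crosses the wall and moves to an adjacent compartment, or $\gamma(t^*_i)\notin D(\beta_i)$, so $\gamma$ merely crosses the hyperplane \emph{inside} a compartment whose $i$-th sign is $0$ and no wall is crossed. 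Listing the genuine wall crossings in increasing order of $t$ produces the sequence $\cU_{\epsilon(0)},\dots,\cU_{\epsilon(s)}$, which by Proposition~\ref{prop: weakly admissible S gives convex compartments} are exactly the components of $\RR^n\setminus CL(\cS)$ traversed by $\gamma$, each consecutive pair separated by one wall crossed from the $-$ to the $+$ side.

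The one clause still to verify is the lexicographic condition of the green-sequence definition: when $\gamma$ crosses $D(\beta_{k_i})$ the signs $\epsilon_j$ with $j<k_i$ must not change. I would prove this by induction on $j$, using that admissibility determines $\epsilon_j$ from $\epsilon_1,\dots,\epsilon_{j-1}$ alone. Assuming $\epsilon_1,\dots,\epsilon_{j-1}$ are unchanged across $t_i$, the partial compartment $\cU_{\epsilon_1,\dots,\epsilon_{j-1}}$ is the same before and after, so whether $D(\beta_j)$ cuts it, i.e.\ whether $\epsilon_j=0$, is unchanged; and if $\epsilon_j\ne0$ its value is the sign of $\brk{\gamma(t),\beta_j}$, which does not flip at $t_i=t^*_{k_i}$ since $t^*_j\ne t^*_{k_i}$ by genericity. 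The same reasoning at $j=k_i$ shows $\epsilon(i-1)_{k_i}=-$ (it cannot be $0$, as $\gamma$ actually meets $D(\beta_{k_i})$ while remaining in $\cU_{\epsilon_1,\dots,\epsilon_{k_i-1}}$) and $\epsilon(i)_{k_i}=+$, giving condition (1).

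I expect the main obstacle to be precisely this last bookkeeping with the $0$ signs: one must carefully distinguish crossing the full hyperplane $\brk{\cdot,\beta_i}=0$ from crossing the actual wall $D(\beta_i)$, and confirm that the admissibility convention keeps the two notions of ``sign'' compatible along $\gamma$. Everything else is soft, and as a sanity check the construction recovers the generalized Coxeter path of Remark~\ref{rem: Coxeter path} as the special case $p=-\sum_j j\alpha_j$.
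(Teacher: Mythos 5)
Your proposal is correct and follows essentially the same route as the paper: the paper's proof also takes a general point $v\in\cU_\epsilon$ and the straight line $f(t)=v+(t,t,\cdots,t)$, observing that $\brk{(1,\cdots,1),\beta}>0$ for every positive root $\beta$ forces all wall crossings to be in the green direction, with the endpoints in the negative and positive compartments. The paper leaves the genericity of $v$ and the sign-bookkeeping for the admissible $\epsilon$'s implicit, which you spell out, but the underlying argument is identical.
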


\begin{proof}
Given any $\cS$-compartment $\cU_\epsilon$, choose a general point $v\in \cU_\epsilon$ and consider the straight line $f(t)=v+(t,t,\cdots,t)$, $t\in \RR$. This line passes though walls $D(\beta)$ only in the positive direction since
\[
	\brk{ (1,1,\cdots,1),\beta} >0
\]
for all positive roots $\beta$. Thus $f(t)$ is an $\cS$-green path giving an $\cS$-green sequence. For $t>>0$, the coordinates of $f(t)$ are all positive. For $t<<0$, they are all negative. Therefore $f(t)$ gives a maximal $\cS$-green sequence passing through the $\cS$-compartment $\cU_\epsilon$ at $t=0$.
\end{proof}

\begin{proof}[Proof of Theorem \ref{thm: L(S) is picture for picture group G(S)}]
Given an $\cS$-compartment $\cU_\epsilon$ choose an $\cS$-green path through $\cU_\epsilon$ as in the lemma above. Let $g(\epsilon)$ be the product of labels $x(\beta_i)$ for the walls crossed by this path on the way to $\cU_\epsilon$. Condition \eqref{eq: labels for wall crossing} will be satisfied. We only need to show that $g(\epsilon)$ is well defined. To do this suppose we have two $\cS$-green paths $\gamma,\gamma'$ from the negative compartment to $\cU_\epsilon$. Since $\RR^n$ is contractible these paths are homotopic. The homotopy gives a mapping of $h:[0,1]^2\to \RR^n$. Make this a smooth mapping transverse to $CL(\cS)$.

Since each wall $D(\beta)$ is contained in the hyperplane $H(\beta)$, $CL(\cS)$ is contained in the union of these hyperplanes. The intersection of two hyperplanes has codimension 2. Since $\cS$ is finite, there are only finitely many such subspaces. We ignore the other intersections which have higher codimension. By transversality, the homotopy $h$ will only meet these codimension 2 subspaces at a finite number of points. Let $x_0\in \RR^n$ be one these points and let $\cB$ be the set of all $\beta\in\cS$ so that $x_0\in D(\beta)$. Let $\cA$ be the set of minimal elements of $\cB$, i.e., the set of all $\alpha\in\cB$ so that no subroot of $\alpha$ lies in $\cB$.

Then $\cA$ has at most two elements since, otherwise, by Proposition \ref{prop: exceptional sequences are lin indep}, the intersection of $D(\alpha)$ for $\alpha\in\cA$ has codimension $\ge3$. If $\cA$ has only one element then $\cA=\cB$. In that case, the wall crossing sequence is unchanged when the path is deformed past $x_0$. The remaining case is when $\cA$ has two elements: $\cA=\{\alpha_1,\alpha_2\}$. By Corollary \ref{cor: W(x0) cap S}, the other elements of $\cB$ are positive linear combinations $\beta=x\alpha_1+y\alpha_2$ and $D(\beta)$ lies on the negative side of $D(\alpha_1)$ and the positive side of $D(\alpha_2)$ since $\alpha_1\subset\beta$ implies $\brk{v,\alpha_1}\le 0$ for all $v\in D(\beta)$. This means that, on one side of $x_0$, the $\cS$-green path goes through $D(\alpha_1)$ followed by $D(\alpha_2)$ and on the other side, it goes through $D(\alpha_2)$, then, being on the positive side of $D(\alpha_2)$ and on the negative side of $D(\alpha_1)$ it goes through $D(\beta)$ for $\beta\in\cB$. (See Figure \ref{Fig: wall crossing homotopy}.) This sequence of wall crossings gives the same element of the picture group. So, the group label $g(\epsilon)$ is independent of the path. This proves the theorem.
\end{proof}

\begin{figure}[htbp]
\begin{center}
\begin{tikzpicture}[scale=.85]
\coordinate (A) at (0,0);
\coordinate (B) at (3.5,0);
\coordinate (Bp) at (4,3);
\coordinate (X) at (1,3);
\coordinate (A1) at (3,-1.5);
\coordinate (A1p) at (3.5,1.75);
\coordinate (A2) at (3,1.5);
\coordinate (A2p) at (3.5,4.25);
\coordinate (DA2) at (2.5,3.25);
\coordinate (A1m) at (-3,1.5);
\coordinate (A1mp) at (-1.5,4.25);
\coordinate (A2m) at (-3,-1.5);
\coordinate (A2mp) at (-2,1);
\draw[very thick] (A1)--(A1p)--(A1mp);
\draw[fill,color=white] (X)--(A)--(B)--(Bp)--(A2p)--(X);
\draw[very thick] (B)--(Bp)--(X);
\draw[fill,color=white] (X)--(A)--(A2)--(A2p)--(X);
\draw[very thick] (A2m)--(A2mp);
\draw[very thick] (A1)--(A1m)--(A1mp) (X)--(A2p)--(A2)--(A2m) (A)--(B);
\draw[thick] (A)--(X) (-1.8,-.3) node{$D(\alpha_2)$};
\draw[thick] (-2,1.7) node{$D(\alpha_1)$};
\draw[thick] (1.3,1.3) node{$D(\alpha_2)$};
\draw[thick] (2.4,-.6) node{$D(\alpha_1)$};
\draw[thick] (2.7,.4) node{$D(\beta)$};
\draw[fill] (.5,1.5) circle[radius=1mm] node[left]{$x_0$};
\draw[ thick,color=red] (1,3.9) --(-.5,1.6);
\draw[ thick,color=red] (1,3.9) --(0,1.6);
\draw[ thick,color=red] (1,3.9) --(0.5,1.6);
\draw[ thick,color=red] (1,3.9) --(1,1.6);
\draw[ thick,color=red] (1,3.9) --(1.5,1.6);
\draw[ thick,color=red] (1.5,2.5) node{$h$};
\draw[ thick,color=red] (-.5,-.25)--(0,-1.4) --(0,0);
\draw[ thick,color=red] (.5,-.25)--(0,-1.4) --(1,-.5);
\draw[very thick,color=green,->] (1,4)..controls (1,3.5) and (2,3)..(2,1.5); 
\draw[very thick,color=green,->] (2,1)--(1.9,.5); 
\draw[very thick,color=green,->] (1.8,0)--(1.6,-.5); 
\draw[very thick,color=green,->] (1.5,-.75)..controls (1.3,-1) and (0,-1.2)..(0,-1.5); 
\draw[very thick,color=green,->] (1,4)..controls (1,3.5) and (-1,2)..(-1,1.5); 
\draw[very thick,color=green,->] (-1,.5)--(-1,0); 
\draw[color=green!70!black] (-.5,2.6) node{$\gamma$} (2.1,2.6) node{$\gamma'$}; 
\draw[very thick,color=green,->]  (-.9,-.45)..controls (-.7,-1)and (0,-1.2)..(0,-1.5); 
\end{tikzpicture}
\caption{A typical intersection of two walls $D(\alpha_1)$ and $D(\alpha_2)$ producing walls $D(\beta_i)$. In this drawing there is only $\beta=\alpha_1+\alpha_2$. The green path $\gamma$ crosses $D(\alpha_1),D(\alpha_2)$ and $\gamma'$ crosses $D(\alpha_2),D(\beta),D(\alpha_1)$. The homotopy $h:\gamma\simeq\gamma'$ passes through $x_0$.}
\label{Fig: wall crossing homotopy}
\end{center}
\end{figure}

\begin{lem}\label{lem: green sequences give cS}
Take any maximal $\cS$-green sequence for $\cS$ admissible and consider the sequence of walls $D(\beta_{k_1})$, $\cdots,D(\beta_{k_s})$ which are crossed by the sequence. Then the product of the corresponding generators  $x(\beta_{k_i})\in G(\cS)$ is equal to the Coxeter element $c_\cS\in G(\cS)$:
\[
	x(\beta_{k_1})\cdots x(\beta_{k_s})=\prod x(\alpha_i).
\]
\end{lem}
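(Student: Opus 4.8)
The plan is to reduce the statement to two facts already in hand: the well-definedness of the compartment labels from Theorem \ref{thm: L(S) is picture for picture group G(S)}, and the explicit computation of the Coxeter path in Remark \ref{rem: Coxeter path}. First I would invoke Theorem \ref{thm: L(S) is picture for picture group G(S)} to equip every $\cS$-compartment $\cU_\epsilon$ with a group label $g(\epsilon)\in G(\cS)$ satisfying the wall-crossing rule \eqref{eq: labels for wall crossing}, normalized (as the remark following that theorem permits) so that $g(\epsilon(0))=1$ on the negative compartment, i.e.\ the one containing vectors with all coordinates negative.

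Next I would read off the product in the lemma directly from these labels. By the definition of a maximal $\cS$-green sequence, each pair of consecutive compartments $\cU_{\epsilon(i-1)},\cU_{\epsilon(i)}$ is separated by a single wall $D(\beta_{k_i})$ with $\cU_{\epsilon(i)}$ on its positive side. Hence the wall-crossing rule gives $g(\epsilon(i))=g(\epsilon(i-1))\,x(\beta_{k_i})$ for each $i$. Telescoping from $i=1$ to $s$ and using $g(\epsilon(0))=1$ yields
\[
	x(\beta_{k_1})\cdots x(\beta_{k_s})=g(\epsilon(s)),
\]
so the product in question is exactly the label of the terminal compartment $\cU_{\epsilon(s)}$, which by condition (3) in the definition is the positive compartment.

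Finally I would identify $g(\epsilon(s))$ independently of the chosen sequence. Since $g$ is a well-defined function of the compartment (this is the content of Theorem \ref{thm: L(S) is picture for picture group G(S)}), the label of the positive compartment does not depend on which maximal green sequence was used to reach it. I would then compute that label using one specific sequence, the Coxeter path of Remark \ref{rem: Coxeter path}: that path is a maximal $\cS$-green sequence crossing exactly the simple walls $D(\alpha_i)$ in lateral order and no others, so the identical telescoping identifies the label of the positive compartment with $x(\alpha_1)\cdots x(\alpha_k)=c_\cS$. Combining, every maximal $\cS$-green sequence gives $x(\beta_{k_1})\cdots x(\beta_{k_s})=c_\cS$.

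The substantive work is all upstream, in Theorem \ref{thm: L(S) is picture for picture group G(S)} (well-definedness of the labels, resting on the codimension-two homotopy analysis) and in Remark \ref{rem: Coxeter path} (that the Coxeter path avoids every non-simple wall); granting those, the present lemma is a short telescoping argument. The one point I would check carefully is that the two endpoints of \emph{every} maximal green sequence are forced to be the normalized negative and positive compartments — this is precisely conditions (2) and (3) in the definition — so that the normalization $g(\epsilon(0))=1$ always applies and the terminal label is always the same element $c_\cS$.
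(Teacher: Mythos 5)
Your proposal is correct and is essentially identical to the paper's own proof: both invoke Theorem \ref{thm: L(S) is picture for picture group G(S)} with the normalization $g=1$ on the negative compartment, telescope the wall-crossing rule \eqref{eq: labels for wall crossing} to identify the product with the label of the positive compartment, and then use the Coxeter path of Remark \ref{rem: Coxeter path} to compute that label as $c_\cS$. Your write-up just makes explicit the telescoping and the role of conditions (2) and (3) in the definition of a maximal green sequence, which the paper leaves implicit.
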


\begin{proof} Use Theorem \ref{thm: L(S) is picture for picture group G(S)} with the group element $g(\epsilon)=1$ on the negative $\cS$-compartment. Then the group label on the positive $\cS$-compartment is equal to the product of the positive expression associated to any maximal $\cS$-green sequence. By Remark \ref{rem: Coxeter path}, any Coxeter path gives the Coxeter element. Therefore, every maximal $\cS$-green sequence gives a positive expression for the Coxeter element of $G(\cS)$.
\end{proof}

Lemma \ref{lem: green sequences give cS} can be rephrased as follows. Any maximal $\cS$-green sequence gives a positive expression for $c_\cS$ by reading the labels of the walls which are crossed by the sequence. The main theorem of this paper is the following theorem and its corollary. 

\begin{customthm}{A}\label{thm A}
Suppose that $\cS$ is an admissible set of roots. Then, the operation described above gives a bijection:
\[
	\left\{
	\text{maximal $\cS$-green sequences}
	\right\}\xrightarrow{\cong}
	\left\{
	\text{positive expressions for $c_\cS$ in $G(\cS)$}
	\right\}
\]
\end{customthm}

It is clear that distinct maximal $\cS$-green sequences give distinct positive expressions. Therefore, it suffices to show that every positive expression for $c_\cS$ can be realized as a maximal $\cS$-green sequence.

Recall from Remark \ref{rem: G(R) is a functor}(c) that, for $\Lambda$ a hereditary artin algebra of finite representation type, the set $\Phi^+(\Lambda)$ of positive roots of $\Lambda$ forms an admissible set and that the picture group of $\Lambda$ is equal to the picture group of $\cS=\Phi^+(\Lambda)$. This leads to the following corollary.

\begin{customcor}{B}\label{cor B}
For $\Lambda$ any hereditary artin algebra of finite representation type, there is a bijection between the set of maximal green sequences for $\Lambda$ and the set of positive expressions for the Coxeter element $c_Q=x(\alpha_1)\cdots x(\alpha_n)$ in $ G(\Lambda)=G(\Phi^+(\Lambda))$.\qed
\end{customcor}

\subsection{Outline of proof of Theorem A}\label{ss1.3: outline of proof of main theorem}

The proof is by induction on $m$, the size of the finite set $\cS$. If $m=1$, the root $\beta_1$ must be simple. So, the group $G(\cS)$ is infinite cyclic with generator $x(\beta_1)$ which is equal to $c_\cS$. There are two compartments $\cU_1,\cU_{-1}$ separated by the single hyperplane $D(\beta_1)=H(\beta_1)$. And $\cU_-,\cU_+$ is the unique $\cS$-green sequence. The associated positive expression is $x(\beta_1)$ which is the unique positive expression for $c_\cS$. So, the result holds for $m=1$. Thus, we may assume that $m\ge 2$ and the theorem holds for the admissible sequence of roots $\cS_0=(\beta_1,\cdots,\beta_{m-1})$.

\begin{rem}\label{rem: bm commutes with b iff hom-orthog}
One key property of the last element $\beta_m$ in an admissible sequence $\cS$ is that, for $\beta\neq \beta_m$ in $\cS$, $x(\beta)$ commutes with $x(\beta_m)$ if and only if $\beta$ is hom-orthogonal to $\beta_m$. The reason is that there is a formula for the commutator of two roots if and only if they are hom-orthogonal and, in that case, the commutator is a product of extensions of these roots. But any extension comes afterwards in admissible (vertical) order, so any extension of $\beta_m$ will not be in the set $\cS$. 
\end{rem}

\begin{lem}\label{lem: pi: G(S) to G(S0)}
There is a surjective group homomorphism
\[
	\pi:G(\cS)\onto G(\cS_0)
\]
given by sending each $x(\beta_i)\in G(\cS)$ to the generator in $G(\cS_0)$ with the same name when $i<m$ and sending $x(\beta_m)$ to 1. 
\end{lem}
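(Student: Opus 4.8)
The plan is to construct $\pi$ directly from the presentation of $G(\cS)$ in Definition \ref{def: picture group G(S)}. Since a group defined by generators and relations admits a homomorphism to any target group as soon as the proposed images of the generators satisfy every defining relation, it suffices to define $\phi$ on generators by $\phi(x(\beta_i))=x(\beta_i)$ for $i<m$ and $\phi(x(\beta_m))=1$, and then verify that $\phi$ carries each relation \eqref{eq: commutator relation} of $G(\cS)$ to a valid relation in $G(\cS_0)$. First I would note that $G(\cS_0)$ is well defined, because the Remark following the definition of admissible sequences guarantees that deleting the last element $\beta_m$ from the admissible sequence $\cS$ leaves an admissible sequence $\cS_0$.

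The relations of $G(\cS)$ are indexed by hom-orthogonal pairs $\beta_i,\beta_j$ with $ext(\beta_i,\beta_j)=0$, and I would split into two cases according to whether $\beta_m$ belongs to the pair $\{\beta_i,\beta_j\}$. If $\beta_m\in\{\beta_i,\beta_j\}$, then by Remark \ref{rem: bm commutes with b iff hom-orthog} (equivalently, the displayed ``important case $j=m$'' following Remark \ref{rem: commutator in lateral order}) any proper extension $\gamma_k=a_k\beta_i+b_k\beta_j$ would come strictly after $\beta_m$ in vertical order and hence cannot lie in $\cS$; so the relation degenerates to the commutation $x(\beta_i)x(\beta_m)=x(\beta_m)x(\beta_i)$. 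Applying $\phi$ replaces $x(\beta_m)$ by $1$ and turns this into the trivial identity $x(\beta_i)=x(\beta_i)$ in $G(\cS_0)$ (and symmetrically when $\beta_i=\beta_m$).

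If instead $i,j<m$, then both $\beta_i,\beta_j\in\cS_0$ and the $G(\cS)$ relation reads $x(\beta_i)x(\beta_j)=\prod_k x(\gamma_k)$, where the $\gamma_k$ are the roots of $\cS$ of the form $a_k\beta_i+b_k\beta_j$ listed in increasing ratio $a_k/b_k$. The only way $\beta_m$ can occur here is as one of the intermediate factors $\gamma_k$. Since $\cS_0=\cS\setminus\{\beta_m\}$, the list of linear combinations $a_k\beta_i+b_k\beta_j$ lying in $\cS_0$ is exactly the above list with $\beta_m$ (if present) deleted; this deleted list, in the same order, is precisely the right-hand side of the corresponding relation of $G(\cS_0)$ for the pair $\beta_i,\beta_j$. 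Applying $\phi$ sets $x(\beta_m)=1$, which removes exactly that factor, so $\phi$ sends the $G(\cS)$ relation to the $G(\cS_0)$ relation for the same pair. Hence $\phi$ respects all relations and extends to a group homomorphism $\pi:G(\cS)\onto G(\cS_0)$.

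Surjectivity is then immediate, since the image of $\pi$ contains every $x(\beta_i)$ with $i<m$ and these are exactly the generators of $G(\cS_0)$. The only delicate point I expect is the bookkeeping in the second case: confirming that deleting the single factor $x(\beta_m)$ from the middle of the product $\prod_k x(\gamma_k)$ yields precisely the defining relation of $G(\cS_0)$ rather than some other word. This follows at once from the observation that the two relation sets are indexed by the same pairs of hom-orthogonal roots and differ only in which linear combinations $a_k\beta_i+b_k\beta_j$ happen to lie in $\cS$ versus $\cS_0$.
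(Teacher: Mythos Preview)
Your proof is correct and follows essentially the same approach as the paper: both split into the case where $\beta_m$ is one of the two hom-orthogonal roots (yielding a bare commutation relation that collapses under $x(\beta_m)\mapsto 1$) and the case where $\beta_m$ can appear only as an intermediate $\gamma_k$ (so deleting it yields exactly the corresponding $G(\cS_0)$ relation). Your write-up is in fact somewhat more explicit than the paper's, spelling out why $\cS_0$ is admissible and why surjectivity is automatic.
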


\begin{proof}
By Remark \ref{rem: bm commutes with b iff hom-orthog} there are only two kinds of relations in $G(\cS)$ involving $x(\beta_m)$:
\begin{enumerate}
\item Commutation relations: $[x(\beta_m),x(\beta_j)]=1$ when $\beta_m,\beta_j$ are hom-orthogonal.
\item Relations in which $x(\beta_m)$ occurs only once:
\[
	x(\beta_i)x(\beta_j)=x(\beta_j)\cdots x(\beta_m)\cdots x(\beta_i).
\]\end{enumerate}
In both cases, when $x(\beta_m)$ is deleted, the relation in $G(\cS)$ reduces to a relation in $G(\cS_0)$ (or to the trivial relation $x(\beta_j)=x(\beta_j)$ in Case 1). Thus, $G(\cS_0)$ is given by $G(\cS)$ modulo the relation $x(\beta_m)=1$.
\end{proof}

Suppose $m\ge 2$ and $\beta_m$ is simple, say $\beta_m=\alpha_k$ the $k$th simple root. Then, since $\cS$ is admissible, all previous roots $\beta_j,j<m$ have support disjoint from $\alpha_k$. Then $x(\beta_m)$ is central and $G(\cS)$ is the product $G(\cS)=G(\cS_0)\times \ZZ$ where the $\ZZ$ factor is generated by $x(\beta_m)$. Thus a positive expression for $c_\cS$ is given by any positive expression for $x_{\cS_0}$ with the letter $x(\beta_m)$ inserted at any point. 

Each $\cS_0$-compartment $\cU_\epsilon$ is the inverse image in $\RR^n$ of a compartment for $\cS_0$ in $\RR^{n-1}$. Thus, any $\cS$-maximal green sequence will pass through these walls giving a maximal $\cS_0$-green sequence and must, at some point, pass from the negative side of the hyperplane $D(\beta_m)$ to its positive side. (See Figure \ref{Figure99} for an example.) By induction on $m$, this $\cS_0$-maximal green sequence is any positive word for $c_{\cS_0}$ and the crossing of $D(\beta_m)$ inserts $x(\beta_m)$ at any point. This describes all words for $c_\cS$. So, the theorem holds in this case.

Now suppose $\beta_m$ is not simple. Then $\cS,\cS_0$ have the same set of simple roots. So, $\pi(c_\cS)=c_{\cS_0}$. Suppose that $w$ is a positive expression for $c_\cS$ in $G(\cS)$. Let $\pi(w)=w_0$ be the positive expression for $c_{\cS_0}$ in $G(\cS_0)$ given by deleting every instance of the generator $x(\beta_m)$ from $w$. By induction on $m$, there exists a unique maximal $\cS_0$-green sequence $\cU_{\epsilon(0)},\cdots,\cU_{\epsilon(s)}$ which realizes the positive expression $w_0$. These fall into two classes.\vs2

\underline{Class 1.} Each $\cS_0$-compartment $\cU_{\epsilon(i)}$ in the maximal $\cS_0$-green sequence is disjoint from $D(\beta_m)$.

For maximal $\cS_0$-green sequences in this class, each $\cU_{\epsilon(i)}=\cU_{\epsilon'(i)}$ where $\epsilon'(i)=(\epsilon_1,\cdots,\epsilon_{m})$ with $\epsilon_m=0$ and $\epsilon(i)=(\epsilon_1,\cdots,\epsilon_{m-1})$. Therefore, the maximal $\cS_0$-green sequence $\cU_{\epsilon(i)}$ is also a maximal $\cS$-green sequence and $w_0=w$ by the following lemma proved in subsection \ref{ss3.5: proof of lemmas C,E}. So, the positive expression $w$ is realized by a maximal $\cS$-green sequence.

\begin{customlem}{C}\label{lem C: beta m occurs a fixed number of times}
Let $w,w'$ be two positive expressions for the same element of the group $G(\cS)$. Suppose $\pi(w)=\pi(w')$, i.e., the two expressions are identical modulo the generator $x(\beta_m)$. Then $x(\beta_m)$ occurs the same number of times in $w,w'$. In particular, $x(\beta_m)\neq1$ in $G(\cS)$.
\end{customlem}

In the case at hand, $w'=w_0$ does not contain the letter $x(\beta_m)$. So, neither does $w$ and we must have $w=w_0$ as claimed. So, by Lemma \ref{lem C: beta m occurs a fixed number of times}, the theorem hold when $w_0=\pi(w)$ corresponds to a maximal $\cS_0$-green sequence of Class 1.

\underline{Class 2.} At least one $\cS_0$-compartment in the $\cS_0$-green sequence meets $D(\beta_m)$.

For green sequences in this class, the $\cS_0$-compartments which intersect $D(\beta_m)$ are consecutive:

\begin{customlem}{D}\label{lem D: properties of compartments used in proof}
Let $\cU_{\epsilon(0)},\cdots,\cU_{\epsilon(s)}$ be a maximal $\cS_0$-green sequence. Then
\begin{enumerate}
\item The $\cS_0$-compartments $\cU_{\epsilon(i)}$ which meet $D(\beta_m)$ are consecutive, say $\cU_{\epsilon(p)},\cdots,\cU_{\epsilon(q)}$.
\item Let $D(\beta_{k_i})$ be the wall between $\cU_{\epsilon(i-1)}$ and $\cU_{\epsilon(i)}$ so that $w_0=x(\beta_{k_1})\cdots x(\beta_{k_s})$. Then $\beta_m$ is hom-orthogonal to $\beta_{k_i}$ for $p<i\le q$ but not hom-orthogonal to $\beta_{k_p}$, $ \beta_{k_{q+1}}$.
\item For $p<i\le q$ and $\delta\in\{+,-\}$, $D(\beta_{k_i})$ is also the wall separating $\cU_{\epsilon(i-1),\delta}$ and $\cU_{\epsilon(i),\delta}$.
\end{enumerate}
\end{customlem}

Lemma \ref{lem D: properties of compartments used in proof} tells us: (1) The $\cS_0$-compartments $\cU_{\epsilon(r)}$ for $p\le r\le q$ are divided into two $\cS$-compartments by the wall $D(\beta_m)$. (3) The wall separating consecutive $\cS_0$-compartments $\cU_{\epsilon(r)}, \cU_{\epsilon(r+1)}$ for $p\le r<q$ also separate the pairs of $\cS$-compartments $\cU_{\epsilon(r),-},\cU_{\epsilon(r+1),-}$ and $\cU_{\epsilon(r),+},\cU_{\epsilon(r+1),+}$. (See Figure \ref{Fig: Inescapable regions}.)

So, we can refine the maximal $\cS_0$-green sequence to a maximal $\cS$-green sequence, by staying on the negative side of $D(\beta_m)$ until we reach the $\cS$-compartment $\cU_{\epsilon(r),-}$ for some $p\le r\le q$, then cross through $D(\beta_m)$ into $\cU_{\epsilon(r),+}$ and continue in the given $\cS_0$-compartments but on the positive side of $D(\beta_m)$. This gives the maximal $\cS$-green sequence
\[
	\cU_{\epsilon(0),0},\cdots,\cU_{\epsilon(p-1),0},\cU_{\epsilon(p),-},\cdots,\cU_{\epsilon(r),-},\cU_{\epsilon(r),+}, \cdots, \cU_{\epsilon(q),+}, \cU_{\epsilon(q+1),0}, \cdots, \cU_{\epsilon(s),0} 
\]
of length $s+1$ giving the positive expression
\[
	w_r=x(\beta_{k_1})\cdots x(\beta_{k_p}) \cdots x(\beta_{k_r})x(\beta_m)x(\beta_{k_{r+1}})\cdots x(\beta_{k_q})\cdots x(\beta_{k_s}).
\]
By the defining relations in the group $G(\cS)$, the generators $x(\beta)$ and $x(\beta_m)$ commute if $\beta$ is hom-orthogonal to $\beta_m$. By (3) in the lemma this implies that $w_r$ is a positive expression for $c_\cS$ if $p\le r\le q$. We have just shown that each such $w_r$ is realizable by a maximal $\cS$-green sequence. So, it remains to show that the positive expression $w$ that we started with is equal to one of these $w_r$. 

By Lemma \ref{lem C: beta m occurs a fixed number of times}, $x(\beta_m)$ occurs exactly once in the expression $w$. We need to show that, if the generator $x(\beta_m)$ occurs in the ``wrong place'' then $w$ is not a positive expression for $c_\cS$, in other words, the product of the elements of $w$ is not equal to $c_\cS$. This follows from the following lemma proved in subsection \ref{ss3.5: proof of lemmas C,E}.

\begin{customlem}{E}\label{lem E: beta m only commutes with hom orthogonal roots}
Let
\[
	\cR(\beta_m)=\{\beta_i\in\cS_0: hom(\beta_i,\beta_m)=0=hom(\beta_m,\beta_i)\}.
\]
Let $\beta_{j_1},\cdots,\beta_{j_s}$ be elements of $\cS_0$ which do not all lie in $\cR(\beta_m)$. Then $x(\beta_m)$, $\prod x(\beta_{j_i})$ do not commute in the group $G(\cS)$.
\end{customlem}

By part (2) of Lemma \ref{lem D: properties of compartments used in proof}, $\beta_{k_r}\in\cR(\beta_m)$ if $p< r\le q$ and $\beta_{k_p},\beta_{k_{q+1}}\notin\cR(\beta_m)$. So, this lemma implies that $w_r$ is a positive expression for $c_\cS$ if and only if $p\le r\le q$. So, we must have $w=w_r$ for one such $r$ and $w$ is realizable. This concludes the outline of the proof of the main theorem. It remains only to prove the three lemmas \ref{lem C: beta m occurs a fixed number of times}, \ref{lem D: properties of compartments used in proof}, \ref{lem E: beta m only commutes with hom orthogonal roots} invoked in the proof.

\begin{rem}\label{rem: only finitely many S-MGS}
The number of times a maximal $\cS$-green sequences crosses $D(\beta_m)$ is at most one. In Class 1, the number is zero by definition. In Class 2, the number is one as explained in detail above assuming Lemmas \ref{lem C: beta m occurs a fixed number of times}, \ref{lem D: properties of compartments used in proof}, \ref{lem E: beta m only commutes with hom orthogonal roots}. It follows that any maximal $\cS$-green sequence crosses any wall $D(\beta_k)$ for $\beta_k\in\cS$ at most once since $\beta_k$ will be the last element of the subsequence $\cR=(\beta_1,\cdots,\beta_k)$ of $\cS$ (which is admissible by Remark \ref{rem: properties of admissible sequences}) and any maximal $\cS$-green sequence gives a maximal $\cR$-green sequence which crosses $D(\beta_k)$ the same number of times. It follows that any maximal $\cS$-green sequence has length at most equal to the size of $\cS$. Since $\cS$ is finite, it follows that there are only finitely maximal $\cS$-green sequences.
\end{rem}

\section{Properties of compartments $\cU_\epsilon$}\label{sec2: Compartments}

We derive the basic properties of the compartments $\cU_\epsilon$ and prove Lemma \ref{lem D: properties of compartments used in proof}. The basic property is the following.

\begin{prop}\label{prop: weakly admissible S gives convex compartments}
For all weakly admissible $\cS$ and all admissible $\epsilon$ the $\cS$-compartment $\cU_\epsilon$ is convex and nonempty. When $\epsilon_m\neq0$, or equivalently, when $D(\beta_m)\cap\cU_{\epsilon_1,\cdots,\epsilon_{m-1}}$ is nonempty, the boundary of $D(\beta_m)$ does not meet $\cU_{\epsilon_1,\cdots,\epsilon_{m-1}}$. Equivalently,
\[
D(\beta_m)\cap\cU_{\epsilon_1,\cdots,\epsilon_{m-1}}=H(\beta_m)\cap \cU_{\epsilon_1,\cdots,\epsilon_{m-1}}.
\]
Consequently, the $\cS$-compartments form the components of the complement of $CL(\cS)$ in $\RR^n$.
\end{prop}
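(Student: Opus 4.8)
The plan is to prove all the assertions simultaneously by induction on $m=\size{\cS}$, establishing the statement for \emph{every} weakly admissible sequence, so that the conclusions are available for each initial segment $(\beta_1,\dots,\beta_k)$, $k\le m$ (each of which is again weakly admissible: laterally ordered because any subset of a laterally ordered set is, and weakly vertically ordered because the relevant sub/quotient-root conditions only refer to earlier indices). Write $\cS_0=(\beta_1,\dots,\beta_{m-1})$ and $\epsilon'=(\epsilon_1,\dots,\epsilon_{m-1})$; the restriction of an admissible $\epsilon$ is admissible for $\cS_0$. Convexity of $\cU_\epsilon$ is immediate, since it is an intersection of the open half-spaces $\brk{x,\beta_i}>0$ and $\brk{x,\beta_j}<0$. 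For nonemptiness: if $\epsilon_m=0$ then $\cU_\epsilon=\cU_{\epsilon'}$, nonempty by induction; if $\epsilon_m\ne0$ then admissibility gives a point $x_0\in D(\beta_m)\cap\cU_{\epsilon'}$, and perturbing $x_0$ inside the open set $\cU_{\epsilon'}$ along a vector $u$ with $\brk{u,\beta_m}\ne0$ produces points on both sides of $H(\beta_m)$, so both $\cU_{\epsilon',+}$ and $\cU_{\epsilon',-}$ are nonempty.

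The heart of the argument, and the step I expect to be the main obstacle, is the \emph{inescapable wall} identity $D(\beta_m)\cap\cU_{\epsilon'}=H(\beta_m)\cap\cU_{\epsilon'}$ when $\epsilon_m\ne0$. By weak vertical ordering, $\beta_m$ satisfies either the subroot condition or the quotient-root condition, and I choose the matching description of $D(\beta_m)$ from Theorem \ref{thm: equivalent definitions of D(b)}, so that every root cutting out $D(\beta_m)$ lies in $\cS_0$. I treat the subroot case (the quotient case is dual, with signs reversed). First, for a proper subroot $\beta_i\subset\beta_m$ the point $x_0\in D(\beta_m)$ has $\brk{x_0,\beta_i}\le0$, so $\epsilon_i\ne+$, i.e.\ $\epsilon_i\in\{0,-\}$. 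Next I claim every proper-subroot inequality is \emph{strict} at each $x\in D(\beta_m)\cap\cU_{\epsilon'}$: if $\epsilon_i=-$ this is automatic, while if $\epsilon_i=0$ then admissibility gives $D(\beta_i)\cap\cU_{\epsilon'}=\emptyset$, yet $\brk{x,\beta_i}=0$ would force $x\in D(\beta_i)$ because subroots of $\beta_i$ are subroots of $\beta_m$ (transitivity of submodules), a contradiction. Hence $D(\beta_m)\cap\cU_{\epsilon'}$ is relatively open in the convex, hence connected, set $H(\beta_m)\cap\cU_{\epsilon'}$; being also relatively closed and nonempty, it equals the whole set. This simultaneously shows the relative boundary of $D(\beta_m)$ misses $\cU_{\epsilon'}$.

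For the final clause I show $\RR^n\setminus CL(\cS)=\bigsqcup_{\epsilon\text{ admissible}}\cU_\epsilon$ with the pieces open, nonempty, convex, and pairwise disjoint, whence each is a connected component. Each $\cU_\epsilon$ avoids every $D(\beta_k)$: if $\epsilon_k\ne0$ because $\cU_\epsilon$ lies strictly off $H(\beta_k)\supseteq D(\beta_k)$, and if $\epsilon_k=0$ because admissibility gives $D(\beta_k)\cap\cU_{\epsilon_1,\dots,\epsilon_{k-1}}=\emptyset$ and $\cU_\epsilon\subseteq\cU_{\epsilon_1,\dots,\epsilon_{k-1}}$. Two distinct admissible $\epsilon,\bar\epsilon$ first differ at some $k$; equality of the initial entries forces, via admissibility applied to the common set $\cU_{\epsilon_1,\dots,\epsilon_{k-1}}$, that the differing entries cannot be $0$ versus nonzero, so they are opposite nonzero signs and $H(\beta_k)$ separates the two regions. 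For covering, given $x\notin CL(\cS)$ I build $\epsilon$ inductively, maintaining $x\in\cU_{\epsilon_1,\dots,\epsilon_k}$: set $\epsilon_k=0$ when $D(\beta_k)\cap\cU_{\epsilon_1,\dots,\epsilon_{k-1}}=\emptyset$, and otherwise invoke the inescapable-wall identity for the initial segment $(\beta_1,\dots,\beta_k)$ (available from induction for $k<m$ and from the previous paragraph for $k=m$) to get $D(\beta_k)\cap\cU_{\epsilon_1,\dots,\epsilon_{k-1}}=H(\beta_k)\cap\cU_{\epsilon_1,\dots,\epsilon_{k-1}}$; since $x\notin D(\beta_k)$ this yields $\brk{x,\beta_k}\ne0$, so $\epsilon_k=\sgn\brk{x,\beta_k}$ is well defined. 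The resulting $\epsilon$ is admissible with $x\in\cU_\epsilon$, completing the induction.
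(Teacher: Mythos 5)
Your proof is correct and follows essentially the same route as the paper's: induction on $m$, splitting off the last root $\beta_m$, using weak admissibility (together with Theorem \ref{thm: equivalent definitions of D(b)} in the dual case) to ensure that all roots cutting out $D(\beta_m)$ lie in $\cS_0$, and then showing that points of $D(\beta_m)$ where a proper-subroot inequality becomes tight lie on walls $D(\beta_i)$ with $\beta_i\in\cS_0$ and hence cannot meet the compartment $\cU_{\epsilon_1,\cdots,\epsilon_{m-1}}$. Your clopen-connectedness step and the explicit disjointness and covering arguments at the end merely spell out what the paper leaves implicit, the only cosmetic difference being that you invoke the definition of admissibility directly where the paper cites the inductive statement that $\cS_0$-compartments avoid $CL(\cS_0)$.
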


\begin{proof}
When $m=1$, $\beta_1$ is simple and $D(\beta_1)=H(\beta_1)$ is a hyperplane whose complement has two convex components $\cU_+,\cU_-$. So, the proposition holds for $m=1$. Now, suppose $m\ge2$ and all statements hold for $m-1$. Let $\cS_0=\cS\backslash \beta_m$. This a weakly admissible sequence of roots. So, the $\cS_0$-components $\cU_\epsilon$ are convex and open and their union is the complement of $CL(\cS_0)$.

Since $\cS$ is weakly admissible, it either contains all subroots of $\beta_m$ or it contains all quotient roots of $\beta_m$. By symmetry we assume the first condition. Let $\epsilon=(\epsilon_1,\cdots,\epsilon_{m-1})$ be admissible of length $m-1$. If $(\epsilon, 0)$ is admissible for $\cS$ then $\cU_{\epsilon,0}=\cU_\epsilon$. Otherwise, $\cU_\epsilon$ meets $D(\beta_m)$. In that case $\cU_\epsilon\cap \partial D(\beta_m)$ must be empty since any element $x_0\in \partial D(\beta_m)$ must be an element of $D(\beta')$ for some proper subroot $\beta'\subsetneq \beta_m$. By assumption, $\beta'\in\cS_0$. So, $x_0\in CL(\cS_0)$. This gives a contradiction since $\cU_\epsilon$ is disjoint from $CL(\cS_0)$ by induction on $m$. Therefore, $\cU_\epsilon$ is divided into two convex open sets $\cU_{\epsilon,+}$ and $\cU_{\epsilon,-}$ separated by $D(\beta_m)$. So the $\cS$-compartments fill up the complement of $CL(\cS_0)\cup D(\beta_m)=CL(\cS)$.
\end{proof}

\subsection{Inescapable regions}\label{ss2.1: inescapable regions}

For $\cS_0$ weakly admissible, let $\cV$ be the closure of the union of some set of $\cS_0$-compartments $\cU_\epsilon$. Then $\cV$ has internal and external walls. The \und{internal walls} of $\cV$ are the ones between two of the compartments $\cU_\epsilon,\cU_{\epsilon'}$ in $\cV$. $\cV$ has points on both sides of the internal walls. The \und{external walls} of $\cV$ are the ones which separate $\cV$ from its complement. The region $\cV$ will be called \und{inescapable} if it is on the positive side of all of its external walls. I.e., they are all red on the inside. Once an $\cS_0$-green sequence enters such a region, it can never leave. Since $\cV$ is closed, it contains all of its internal and external walls. We also consider open regions $\cW$ which are inescapable regions minus their external walls. Then $\cW$ is the complement of the closure of the union of all compartments not in $\cW$.

Given an admissible sequence $\cS$ with last object $\beta_m$ which we assume to be nonsimple, let $\cS_0=(\beta_1,\cdots,\beta_{m-1})$. Recall that this is also admissible. We will construct two inescapable regions $\cW(\beta_m),\cV(\beta_m)$ where the first is open and the second is closed. All maximal $\cS_0$-green sequences start outside both regions, end inside both regions and fall into two classes: those that enter $\cW(\beta_m)$ before they enter $\cV(\beta_m)$ and those that enter $\cV(\beta_m)$ before they enter $\cW(\beta_m)$. And these coincide with the two classes of maximal $\cS_0$-green sequences discussed in the outline of the main theorem (Corollary \ref{cor: two classes of green paths} below).

The first inescapable region is the open set
\[
	\cW(\beta_m):=\{x\in\RR^n: \brk{x,\alpha}>0\text{ for some }\alpha\subsetneq\beta_m\}.
\]
For example, on the left side of Figure \ref{Figure99}, $m=3$ and $\cW(\beta_3)$ is the interior of $D(\alpha_1)$.
\begin{prop} The complement of $\cW(\beta_m)$ in $\RR^n$ is closed and convex. Furthermore:
\begin{equation}\label{eq: V cap H is the complement of D}
	 \cW(\beta_m)\cap H(\beta_m)=H(\beta_m)-D(\beta_m).
\end{equation}
\end{prop}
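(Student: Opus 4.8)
The plan is to prove the two claims in the proposition separately, deriving the second from a careful analysis that also yields the first. Let me set up notation: recall $\cW(\beta_m)=\{x: \brk{x,\alpha}>0\text{ for some }\alpha\subsetneq\beta_m\}$, where $\alpha$ ranges over proper subroots of $\beta_m$. First I would prove that the complement of $\cW(\beta_m)$ is closed and convex. The complement is exactly $\{x: \brk{x,\alpha}\le 0\text{ for all }\alpha\subsetneq\beta_m\}$, which is an intersection of closed half-spaces $\{x:\brk{x,\alpha}\le 0\}$ over the finite collection of proper subroots $\alpha$ of $\beta_m$. (There are finitely many such subroots since $M_{\beta_m}$ is a finite-dimensional exceptional module with finitely many exceptional submodules.) An intersection of closed half-spaces is closed and convex, so this part is essentially immediate; the only thing to check is that the index set is nonempty or, if it is empty, that the statement is vacuously fine.

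For the equation \eqref{eq: V cap H is the complement of D}, I would work from the definition of $D(\beta_m)$. Recall
\[
	D(\beta_m)=\{x\in H(\beta_m): \brk{x,\beta'}\le 0\ \forall \beta'\subset\beta_m\},
\]
where $H(\beta_m)=\{x:\brk{x,\beta_m}=0\}$ and the subroots $\beta'\subset\beta_m$ include $\beta_m$ itself. The key observation is that for $x\in H(\beta_m)$ the constraint $\brk{x,\beta_m}\le 0$ is automatic (it is an equality), so on $H(\beta_m)$ the defining conditions for $D(\beta_m)$ reduce to $\brk{x,\beta'}\le 0$ for all \emph{proper} subroots $\beta'\subsetneq\beta_m$. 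This means $D(\beta_m)=H(\beta_m)\cap(\RR^n\setminus\cW(\beta_m))$, i.e., $D(\beta_m)$ consists of exactly those points of $H(\beta_m)$ lying \emph{outside} $\cW(\beta_m)$. Taking complements within $H(\beta_m)$ gives
\[
	\cW(\beta_m)\cap H(\beta_m)=H(\beta_m)\setminus D(\beta_m),
\]
which is precisely \eqref{eq: V cap H is the complement of D}.

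I would write this out as a chain of set equalities: a point $x\in H(\beta_m)$ lies in $\cW(\beta_m)$ if and only if $\brk{x,\alpha}>0$ for some proper subroot $\alpha$, if and only if $x$ fails at least one of the defining inequalities of $D(\beta_m)$ (the inequality for $\beta_m$ itself being an equality that always holds on $H(\beta_m)$), if and only if $x\notin D(\beta_m)$. The main point requiring care is the reduction from ``all subroots $\beta'\subset\beta_m$'' to ``all proper subroots $\beta'\subsetneq\beta_m$'' in the definition of $D(\beta_m)$; this hinges on the fact that $\beta_m$ is itself a subroot of $\beta_m$ (the improper one) and that its constraint $\brk{x,\beta_m}\le 0$ is subsumed by membership in $H(\beta_m)$.

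I do not expect a serious obstacle here, since both assertions follow formally once the definitions are unwound; the whole content is the bookkeeping observation that the improper subroot contributes the hyperplane condition while the proper subroots carve out $D(\beta_m)$ inside that hyperplane, and that $\cW(\beta_m)$ is built from exactly those proper-subroot inequalities. The only subtlety worth flagging explicitly is that $\cW(\beta_m)$ is defined by \emph{strict} inequalities (open set) while $D(\beta_m)$ is defined by \emph{non-strict} ones, so the complement relation inside $H(\beta_m)$ is clean: the boundary behavior matches up precisely because $x\notin\cW(\beta_m)$ means all the relevant inequalities are non-strict, which is exactly the defining condition for $D(\beta_m)$ on $H(\beta_m)$.
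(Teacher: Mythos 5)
Your proposal is correct and takes essentially the same approach as the paper: the complement is written as the intersection of the closed convex half-spaces $\brk{x,\alpha}\le 0$ over proper subroots $\alpha\subsetneq\beta_m$, and the identity \eqref{eq: V cap H is the complement of D} follows by observing that on $H(\beta_m)$ the stability condition for the improper subroot $\beta_m$ itself holds automatically, so that $D(\beta_m)=H(\beta_m)\cap(\RR^n\setminus\cW(\beta_m))$. Your explicit bookkeeping about proper versus improper subroots and strict versus non-strict inequalities is exactly the (implicit) content of the paper's argument.
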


\begin{proof}
The complement of $\cW(\beta_m)$ is
\[
	\RR^n\backslash\cW(\beta_m)=\{x\in\RR^n: \brk{x,\alpha}\le0\text{ for all }\alpha\subsetneq\beta_m\}
\]
which is closed and convex since it is given by closed convex conditions $\brk{x,\alpha}\le0$.

For the second statement, suppose that $v\in H(\beta_m)$. Then $\brk{v,\beta_m}=0$. By the stability conditions which we are using to define $D(\beta_m)$, $v\in D(\beta_m)$ if and only if $\brk{v,\alpha}\le0$ for all $\alpha\subset\beta_m$, in other words,
\[
	D(\beta_m)=H(\beta_m)\cap(\RR^n\backslash\cW(\beta_m))
\]
which is equivalent to \eqref{eq: V cap H is the complement of D}.
\end{proof}

\begin{prop}\label{prop: walls to enter W}
The region $\cW(\beta_m)$ is inescapable. I.e., all external walls are red. Furthermore, each external walls of $\cW(\beta_m)$ has the form $D(\alpha)$ for some $\alpha\subsetneq \beta_m$. Consequently, every $\cS_0$-compartment is contained either in $\cW(\beta_m)$ or in its complement.
\end{prop}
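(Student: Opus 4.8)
The plan is to first pin down the boundary $\partial\cW(\beta_m)$ and show that it is contained in $CL(\cS_0)$; everything else then follows quickly. Since $\cW(\beta_m)$ is open, its boundary lies in the complement $\RR^n\setminus\cW(\beta_m)$, so every boundary point $x_0$ satisfies $\brk{x_0,\alpha}\le0$ for all $\alpha\subsetneq\beta_m$. On the other hand $x_0$ is a limit of points of $\cW(\beta_m)$; because $\beta_m$ has only finitely many proper subroots, a single $\alpha\subsetneq\beta_m$ must witness membership infinitely often along an approximating sequence, forcing $\brk{x_0,\alpha}\ge0$ and hence $\brk{x_0,\alpha}=0$. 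I would then observe that every subroot $\alpha'\subset\alpha$ is again a proper subroot of $\beta_m$ (a submodule of a submodule is a submodule), so the inequalities $\brk{x_0,\alpha'}\le0$ already in force show $x_0\in D(\alpha)$. Thus $\partial\cW(\beta_m)\subseteq\bigcup_{\alpha\subsetneq\beta_m}D(\alpha)$, and since by the vertical ordering every such $\alpha$ lies in $\cS_0$, this set is contained in $CL(\cS_0)$. This proves at once that every external wall has the asserted form $D(\alpha)$ with $\alpha\subsetneq\beta_m$.

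Next I would check inescapability. Fix an external wall $D(\alpha)$ of $\cW(\beta_m)$. By definition one of its two adjacent regions lies in $\cW(\beta_m)$ and the other in the complement. Any point $x$ of the complement satisfies $\brk{x,\alpha}\le0$ in particular, so the complement sits on the negative (closed) side of $H(\alpha)$; therefore $\cW(\beta_m)$ occupies the positive side $\brk{x,\alpha}>0$. Hence the normal root $\alpha$ points into $\cW(\beta_m)$ across every external wall, i.e. all external walls are red. An $\cS_0$-green path, which crosses each wall only from its negative to its positive side, can therefore enter $\cW(\beta_m)$ but never leave it.

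Finally, the last assertion is a formal corollary of $\partial\cW(\beta_m)\subseteq CL(\cS_0)$. Since $\cW(\beta_m)$ is open and its boundary is contained in the wall set $CL(\cS_0)$, it is a union of connected components of $\RR^n\setminus CL(\cS_0)$, that is, a union of $\cS_0$-compartments. Consequently each $\cS_0$-compartment lies either in $\cW(\beta_m)$ or entirely in its complement.

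I expect the only delicate point to be the boundary computation in the first paragraph: one must combine the finiteness of the set of proper subroots (to extract a single active constraint $\brk{x_0,\alpha}=0$ in the limit) with the transitivity of the subroot relation (to upgrade the weak conclusion $x_0\in H(\alpha)$, together with all $\brk{x_0,\alpha'}\le0$, to the stronger membership $x_0\in D(\alpha)$). Once the inclusion $\partial\cW(\beta_m)\subseteq CL(\cS_0)$ is established, both the inescapability and the partition of the compartments are immediate.
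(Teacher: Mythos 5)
Your proposal is correct, and it gets to the three assertions by a genuinely different route than the paper. The paper's argument is local and path-based: it takes a path $v_t$ crossing an external wall in general position, observes that membership in $\cW(\beta_m)$ is witnessed by a strict inequality $\brk{v_t,\alpha}>0$ for some proper subroot $\alpha\subsetneq\beta_m$, and reads off both the wall label and the red orientation from the sign change at $t=0$. You instead work boundary-first: the pigeonhole argument over the finitely many proper subroots, combined with transitivity of the subroot relation, yields the containment $\partial\cW(\beta_m)\subseteq\bigcup_{\alpha\subsetneq\beta_m}D(\alpha)\subseteq CL(\cS_0)$; inescapability then follows from the global observation that the complement of $\cW(\beta_m)$ lies in the closed negative half-space of every $H(\alpha)$ with $\alpha\subsetneq\beta_m$, so the region on the inner side of each external wall must be the positive one; and the compartment dichotomy follows from the purely topological fact that an open set whose boundary lies in $CL(\cS_0)$ is a union of connected components of $\RR^n\setminus CL(\cS_0)$, which by Proposition \ref{prop: weakly admissible S gives convex compartments} are exactly the $\cS_0$-compartments. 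Your version buys two things: it makes explicit why boundary points lie in $D(\alpha)$ rather than merely in $H(\alpha)$ (the subroot-of-a-subroot step, which the paper leaves implicit), and it replaces sign-tracking along a generic path by a limit argument, giving a cleaner justification of the final assertion than the paper's one-line remark. The one place where you still lean on the same genericity as the paper is the phrase ``this proves at once'': passing from $\partial\cW(\beta_m)\subseteq\bigcup_{\alpha\subsetneq\beta_m}D(\alpha)$ to the claim that each individual external wall is a single $D(\alpha)$ tacitly uses that a codimension-one piece of the boundary cannot be covered by finitely many of the sets $D(\alpha)$ unless it lies in one whose hyperplane it spans; this is standard and minor, and matches the level of rigor of the paper's own general-position step.
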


\begin{proof}
Take any external wall $D(\alpha)$ of $\cW(\beta_m)$. Let $v_t$ be a continuous path which goes through that wall from inside to outside. In other words, $v_t\in\cW(\beta_m)$ for $t<0$ and $v_t\notin \cW(\beta_m)$ for $t\ge0$. By definition of $\cW(\beta_m)$ this means that there is some $\alpha'\subsetneq \beta_m$ so that $\brk{v_t,\beta}$ changes sign from positive to nonpositive at $t$ goes from negative to nonnegative.

By choosing $v_t$ in general position, $v_0$ will not lie in $H(\alpha')$ for any $\alpha'\neq\alpha$. So, we must have $\alpha\subsetneq\beta_m$. And $\brk{v_t,\alpha}>0$ for $t<0$ and $\brk{v_t,\alpha}<0$ for $t>0$. Therefore, $\cW(\beta_m)$ is on the positive (red) side of the external wall $D(\alpha)$. So, $\cW(\beta_m)$ is inescapable.

Since each part of the boundary lies in $D(\alpha)$ for some $\alpha\in \cS_0$, the boundary of $\cW(\beta_m)$ is contained in the union of the boundaries of the $\cS_0$-compartments. So, all such compartments are either entirely insider or entirely outside $\cW(\beta_m)$.
\end{proof}

The second inescapable region is the closed set
\[
	\cV(\beta_m)=\{y\in\RR^n: \brk{y,\gamma}\ge 0\text{ for all quotient roots $\gamma$ of $\beta_m$}\}.
\]
For example, on the left side of Figure \ref{Figure99}, $m=3$ and $\cV(\beta_3)$ is the closure of the interior of $D(\alpha_2)$. In Figure \ref{Fig: Inescapable regions}, $\cV(\beta_m)$ is the region enclosed by the large oval. By arguments analogous to the ones above, we get the following.
\begin{prop}\label{prop: walls to enter V}
$\cV(\beta_m)$ is a closed convex inescapable region whose external walls all have the form $D(\gamma)$ where $\gamma$ is a quotient root of $\beta_m$. So, every $\cS_0$-compartment is contained in $\cV(\beta_m)$ or its complement. Furthermore, 
\[
	\cV(\beta_m)\cap H(\beta_m) = D(\beta_m).
\]
\end{prop}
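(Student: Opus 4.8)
The plan is to run the entire argument for $\cV(\beta_m)$ as the mirror image of the one just given for $\cW(\beta_m)$, reversing all inequalities and letting quotient roots play the role that proper subroots played before. The only genuinely new ingredient is the dual (quotient-root) description of the stability set $D(\beta)$ supplied by Theorem \ref{thm: equivalent definitions of D(b)}, used together with the transitivity of the quotient relation. The easy assertions come first: since $\cV(\beta_m)=\bigcap_\gamma\{y\in\RR^n:\brk{y,\gamma}\ge0\}$ is an intersection of closed half-spaces, one for each quotient root $\gamma$ of $\beta_m$, it is automatically closed and convex, the exact analogue of the convexity of the complement of $\cW(\beta_m)$. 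Each defining half-space already places $\cV(\beta_m)$ on the positive side of $H(\gamma)$, which is what will force inescapability once the external walls are identified.

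Next I would identify the external walls and prove inescapability, copying the proof of Proposition \ref{prop: walls to enter W}. Take an external wall and a path $y_t$ crossing the topological boundary of $\cV(\beta_m)$ from inside ($t<0$) to outside ($t>0$); since $\cV(\beta_m)$ is cut out by the conditions $\brk{y,\gamma}\ge0$, leaving it means some $\brk{y_t,\gamma}$ passes from nonnegative to negative. Choosing $y_t$ in general position, only one such $\gamma$ is active at $y_0$, so $\brk{y_0,\gamma}=0$ while $\brk{y_0,\gamma'}>0$ for the remaining quotient roots $\gamma'$; in particular $\cV(\beta_m)$ lies on the side $\brk{\cdot,\gamma}\ge0$, the positive (red) side, so the region is inescapable. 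To upgrade $y_0$ from $H(\gamma)$ to $D(\gamma)$ I would invoke the quotient-root form of $D(\gamma)$: it suffices that $\brk{y_0,\gamma''}\ge0$ for every quotient root $\gamma''$ of $\gamma$, and each such $\gamma''$ is again a quotient root of $\beta_m$, because composing a surjection $M_{\beta_m}\twoheadrightarrow M_\gamma$ with $M_\gamma\twoheadrightarrow M_{\gamma''}$ yields a surjection $M_{\beta_m}\twoheadrightarrow M_{\gamma''}$. Hence $\brk{y_0,\gamma''}\ge0$ holds because $y_0\in\cV(\beta_m)$, giving $y_0\in D(\gamma)$. Thus every external wall has the form $D(\gamma)$ with $\gamma$ a quotient root of $\beta_m$; since $\cS$ is admissible, each such $\gamma\neq\beta_m$ already lies in $\cS_0$, so the boundary of $\cV(\beta_m)$ is contained in $CL(\cS_0)$ and each $\cS_0$-compartment lies wholly inside or wholly outside $\cV(\beta_m)$.

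For the final identity $\cV(\beta_m)\cap H(\beta_m)=D(\beta_m)$ I would simply compare the two definitions on the hyperplane $H(\beta_m)$. For $v$ with $\brk{v,\beta_m}=0$, membership in $\cV(\beta_m)$ reads $\brk{v,\gamma}\ge0$ for all quotient roots $\gamma$ of $\beta_m$, which is verbatim the quotient-root description of $D(\beta_m)$ from Theorem \ref{thm: equivalent definitions of D(b)} (the constraint from $\gamma=\beta_m$ being the vacuous $0\ge0$). Hence the two sets coincide.

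The step I expect to require the most care is the identification of the external walls as genuine $D(\gamma)$ rather than arbitrary pieces of the hyperplanes $H(\gamma)$: everything hinges on upgrading the boundary point, and this is exactly where one must invoke the transitivity of quotients and the dual characterization of $D$ from the Appendix, rather than rely on any formal parallel with the $\cW(\beta_m)$ case. The remaining assertions are bookkeeping that mirrors the preceding propositions.
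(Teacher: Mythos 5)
Your proof is correct and follows essentially the same route as the paper, which in fact gives no separate proof of this proposition: it states only that the result follows ``by arguments analogous to the ones above'' (i.e., to the two propositions for $\cW(\beta_m)$), and your dualization --- with the quotient-root characterization of $D(\gamma)$ from Theorem \ref{thm: equivalent definitions of D(b)} together with transitivity of the quotient relation supplying the upgrade of generic boundary points from $H(\gamma)$ to $D(\gamma)$ --- is exactly that intended argument, spelled out more completely than the paper spells out even the $\cW(\beta_m)$ case (where the analogous upgrade, via transitivity of subroots, is left implicit).

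One point to fix: you should resolve, rather than straddle, the ambiguity over whether $\gamma=\beta_m$ is allowed as a ``quotient root'' in the definition of $\cV(\beta_m)$; it must be excluded, whereas your final parenthetical suggests you include it. If the constraint $\brk{y,\beta_m}\ge 0$ were part of the definition, then the relative interior of $D(\beta_m)$ would itself be an external wall of $\cV(\beta_m)$; since $\beta_m\notin\cS_0$, this wall is not contained in $CL(\cS_0)$, and the $\cS_0$-compartments meeting $D(\beta_m)$ would straddle $\partial\cV(\beta_m)$, so the compartment claim in the proposition would fail. The paper's intended definition uses only proper quotient roots, as its formula for $\cV_0$ (where the condition $\gamma\neq\beta_m$ appears explicitly) and Figure \ref{Fig: Inescapable regions} (where $\cV(\beta_m)$ has points on both sides of $H(\beta_m)$) confirm; with that reading, your general-position argument correctly yields $\gamma\neq\beta_m$ at every external wall, and the rest of your proof goes through verbatim.
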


\begin{figure}[htbp]
\begin{center}
\begin{tikzpicture}[scale=.8]
\coordinate (V) at (4.3,-1);
\coordinate (V0) at (1,2.3);
\coordinate (W) at (-6.8,.8);
\coordinate (H) at (6.8,0);
\coordinate (A) at (-2.8,2);
\coordinate (B) at (2.8,-2);
\coordinate (C) at (4,1.5);
\coordinate (G) at (-1.7,2.3);
\coordinate (Upm) at (-2,.5);
\coordinate (Upp) at (-2,-.5);
\coordinate (Urm) at (0,.5);
\coordinate (Urp) at (0,-.5);
\coordinate (Uqm) at (2,.5);
\coordinate (Uqp) at (2.,-.5);
\draw (Upm) node{$\cU_{\epsilon(p)-}$};
\draw (Upp) node{$\cU_{\epsilon(p)+}$};
\draw (Urm) node{$\cU_{\epsilon(r)-}$};
\draw (Urp) node{$\cU_{\epsilon(r)+}$};
\draw (Uqm) node{$\cU_{\epsilon(q)-}$};
\draw (Uqp) node{$\cU_{\epsilon(q)+}$};
\draw[very thick] (0,-1.5)..controls (4,-1.5) and (5.8,1.5)..(7.5,1.5);
\draw[very thick] (0,-1.5)..controls (-4,-1.5) and (-5.8,1.5)..(-7.5,1.5);
\draw[thick,color=red] (-7.5,0)--(7.5,0);
\draw[very thick] (0,-1) ellipse [x radius=5cm, y radius= 2.5cm];
\draw[color=red] (H) node[above]{$H(\beta_m)$};
\draw (V) node{$\cV(\beta_m)$};
\draw (W) node{$\cW(\beta_m)$};
\draw (V0) node[right]{$\cV_0$};
\draw[thick,->] (V0)--(0.2,1.1);
\draw[very thick, color=green,->] (-5.5,2)..controls (-6,0) and (-5,-1.5)..(-3.5,-1.7); 
\draw[color=green] (-5.5,2) node[right]{$\gamma_1$};
\draw[very thick, color=green,->] (G)..controls (-2,-1) and (1.5,1)..(1.5,-2); 
\draw[color=green] (G) node[right]{$\gamma_2$};
\draw[color=blue] (A) node[above]{$D(\beta_{k_p})$} (B) node[below]{$D(\beta_{k_{q+1}})$} (C) node[right]{$D(\beta_m)$};
\draw[thick, color=blue,->] (A) --(-2.5,1.3);
\draw[thick, color=blue,->] (B) --(2.4,-1.2);
\draw[thick, color=blue,->] (C) --(3.7,.1);
\begin{scope}
\clip (-3.3,-2)--(-2.7,2)--(-1.3,2)--(-.7,-2);
\draw[very thick,color=blue] (0,-1) ellipse [x radius=5cm, y radius= 2.5cm];
\end{scope}
\begin{scope}
\clip (.7,-2)--(1.3,2)--(3.3,2)--(2.7,-2);
\draw[very thick, color=blue] (0,-1.5)..controls (4,-1.5) and (5.8,1.5)..(7.5,1.5);
\end{scope}
\clip (0,-1) ellipse [x radius=5cm, y radius= 2.5cm];
\clip (0,-1.5)..controls (4,-1.5) and (5.8,1.5)..(7.5,1.5)--(-7.5,1.5)..controls (-5.8,1.5) and (-4,-1.5)..(0,-1.5);
\draw[very thick,color=blue] (-5,0)--(5,0);
\draw[thick] (-3.3,-2)--(-2.7,2) (-.7,-2)--(-1.3,2) (.7,-2)--(1.3,2) (2.7,-2)--(3.3,2) ;
\end{tikzpicture}
\caption{The green path $\gamma_1$ is in Class 1 since it is disjoint from $D(\beta_m)$. The green path $\gamma_2$ is in Class 2 and passes through three $\cS_0$-compartments $\cU_{\epsilon(p)},\cU_{\epsilon(r)},\cU_{\epsilon(q)}$ in $\cV_0=int(\cV(\beta_m)\backslash\cW(\beta_m))$. Each of these is divided into two $\cS$-compartments by the wall $D(\beta_m)$ and $\gamma_2$ passes through four of these $\cS$-compartments in $\cV_0$. $D(\beta_m)$ is the part of the hyperplane $H(\beta_m)$ inside the oval region $\cV(\beta_m)$ and outside of $\cW(\beta_m)$.}
\label{Fig: Inescapable regions}
\end{center}
\end{figure}

\subsection{Class 1 and Class 2 maximal $\cS_0$-green sequences}\label{ss2.2: Class 1 and 2}

Recall that a maximal $\cS_0$-green sequence with $\cS_0=(\beta_1,\cdots,\beta_{m-1})$ is in:
\begin{enumerate}
\item \und{Class 1} if each $\cS_0$-compartment $\cU_{\epsilon(i)}$ in the green sequence is disjoint from $D(\beta_m)$.
\item \und{Class 2} if at least one $\cS_0$-compartment, say $\cU_{\epsilon(j)}$, in the $\cS_0$-green sequence meets $D(\beta_m)$. So, $\cU_{\epsilon(j)}$ is divided into two $\cS$-compartments $\cU_{\epsilon(j),-}$ and $\cU_{\epsilon(j),+}$. See Figure \ref{Fig: Inescapable regions}.
\end{enumerate}

\begin{cor}\label{cor: two classes of green paths}
A maximal $\cS_0$-green sequence is in Class 1 if and only if it passes through $\cW(\beta_m)\backslash \cV(\beta_m)$. It is in Class 2 if and only if it contains a compartment in 
\[
\cV(\beta_m)\backslash \cW(\beta_m)=\{x\in \RR^n: \brk{x,\alpha}\le 0\text{ for all }\alpha\subsetneq \beta_m\text{ and }\brk{x,\gamma}\ge0\text{ for all }\beta_m\onto\gamma\}.
\]
\end{cor}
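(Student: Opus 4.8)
The plan is to analyze how a maximal $\cS_0$-green sequence sits relative to the two inescapable regions $\cW(\beta_m)$ and $\cV(\beta_m)$, using that each $\cS_0$-compartment lies entirely inside or entirely outside each of these regions (Propositions \ref{prop: walls to enter W} and \ref{prop: walls to enter V}). First I would record the boundary behavior: the negative compartment has $\brk{x,\beta}<0$ for every positive root $\beta$, so it lies outside both $\cW(\beta_m)$ and $\cV(\beta_m)$, while the positive compartment lies inside both; since both regions are inescapable, no green sequence can leave either once it enters. The key algebraic input is that $\beta_m$ is a quotient root of itself, so the inequality $\brk{x,\beta_m}\ge 0$ is one of the conditions cutting out $\cV(\beta_m)$; hence $\cV(\beta_m)$ lies entirely on the nonnegative side of the hyperplane $H(\beta_m)$. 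Combined with the identities $\cW(\beta_m)\cap H(\beta_m)=H(\beta_m)-D(\beta_m)$ and $\cV(\beta_m)\cap H(\beta_m)=D(\beta_m)$ proved above, this yields the disjoint decomposition $H(\beta_m)=D(\beta_m)\sqcup\big(\cW(\beta_m)\cap H(\beta_m)\big)$ together with $D(\beta_m)=H(\beta_m)\cap\big(\cV(\beta_m)\setminus\cW(\beta_m)\big)$.

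From these identities I would extract two routine facts. First, for a single compartment $\cU$: it meets $D(\beta_m)$ if and only if $\cU\subseteq\cV(\beta_m)\setminus\cW(\beta_m)$ and $\cU$ meets $H(\beta_m)$; in particular a compartment meeting $D(\beta_m)$ is contained in $\cV(\beta_m)\setminus\cW(\beta_m)$. This gives at once the easy direction: a Class 2 sequence contains a compartment in $\cV(\beta_m)\setminus\cW(\beta_m)$. Second, using inescapability, a single sequence cannot contain both a compartment in $\cW(\beta_m)\setminus\cV(\beta_m)$ and one in $\cV(\beta_m)\setminus\cW(\beta_m)$: if it contained both, then whichever of $\cW(\beta_m),\cV(\beta_m)$ the earlier of the two compartments lay in would, by inescapability, also contain the later one, contradicting that the later compartment avoids it. The explicit description of $\cV(\beta_m)\setminus\cW(\beta_m)$ in the statement is then just the intersection of the defining inequalities of $\cV(\beta_m)$ with those of the complement of $\cW(\beta_m)$, which were computed in Proposition \ref{prop: walls to enter W}.

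The heart of the argument, and the step I expect to be the main obstacle, is to show that every sequence visits at least one of the two difference regions, and to determine which. Here I would fix a green path $\gamma$ representing the sequence, arranged so that $\brk{\gamma(t),\beta_m}$ is negative for $t\ll 0$ and positive for $t\gg 0$ (possible since the first and last compartments contain vectors with all coordinates negative, respectively positive) and so that $\gamma$ is in general position. Then there is a first time $t^\ast$ with $\gamma(t^\ast)\in H(\beta_m)$, and $\gamma(t)$ stays on the strictly negative side of $H(\beta_m)$, hence outside $\cV(\beta_m)$, for $t<t^\ast$. By the decomposition of $H(\beta_m)$ there are exactly two cases. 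If $\gamma(t^\ast)\in D(\beta_m)$, then $\gamma(t^\ast)$ lies in the interior of some $\cS_0$-compartment $\cU$ of the sequence, which therefore meets $D(\beta_m)$, so the sequence is Class 2. If instead $\gamma(t^\ast)\in\cW(\beta_m)\cap H(\beta_m)$, then, $\cW(\beta_m)$ being open, for $t$ slightly less than $t^\ast$ the point $\gamma(t)$ lies in $\cW(\beta_m)$ but, being on the negative side of $H(\beta_m)$, outside $\cV(\beta_m)$, so the sequence visits a compartment in $\cW(\beta_m)\setminus\cV(\beta_m)$; by the second fact of the previous paragraph it then does not visit $\cV(\beta_m)\setminus\cW(\beta_m)$ and so is Class 1. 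This dichotomy on the first crossing of $H(\beta_m)$ is the delicate point, and it also rules out any ``simultaneous entry'' degeneracy. Finally I would assemble the logic: Class 2 implies a visit to $\cV(\beta_m)\setminus\cW(\beta_m)$ by the easy direction; conversely, a sequence visiting $\cV(\beta_m)\setminus\cW(\beta_m)$ cannot visit $\cW(\beta_m)\setminus\cV(\beta_m)$, so it must fall in the first case of the dichotomy and is Class 2. The Class 1 statement then follows by taking complements, since Class 1 and Class 2 are by definition exhaustive and mutually exclusive.
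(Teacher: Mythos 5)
Your proof is correct and takes essentially the same route as the paper: the paper likewise locates the first compartment of the sequence meeting $H(\beta_m)$ and splits into the two cases $\cU\cap D(\beta_m)=\emptyset$ (so $\cU\subseteq\cW(\beta_m)\setminus\cV(\beta_m)$ and, by inescapability of $\cW(\beta_m)$, the sequence is Class 1) or $\cU\cap D(\beta_m)\neq\emptyset$ (so $\cU\subseteq\cV(\beta_m)\setminus\cW(\beta_m)$ and the sequence is Class 2). The only cosmetic differences are that you phrase the first crossing via a representing green path in general position where the paper argues directly at the level of compartments, and that you make explicit the mutual-exclusivity step (no sequence visits both difference regions) that the paper leaves implicit.
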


\begin{proof}
Every maximal green sequence starts on the negative side of the hyperplane $H(\beta_m)$ and ends on its positive side. Therefore the maximal $\cS_0$-green sequence must cross the hyperplane at some point. Since $\beta_m\notin \cS_0$, none of the $\cS_0$-compartments has $H(\beta_m)$ as a wall. So, there must be one compartment in the $\cS_0$-green sequences which meets the hyperplane $H(\beta_m)$. Let $\cU_\epsilon$ be the first such compartment. Then, either $\cU_\epsilon\cap D(\beta_m)$ is empty or nonempty. In the first case, $\cU_\epsilon$ is in $\cW(\beta_m)$ and it is outside $\cV(\beta_m)$. Since $\cW(\beta_m)$ is inescapable and does not meet $D(\beta_m)$, the green sequence is in Class 1. In the second case, $\cU_\epsilon$ is in $\cV(\beta_m)$ and not in $\cW(\beta_m)$ and the green sequence is in Class 2. So, these two cases correspond to Class 1 and Class 2 proving the corollary.
\end{proof}

Recall that $\cR(\beta_m)$ is the set of all $\alpha\in\cS_0$ which are hom-orthogonal to $\beta_m$. Let $\cV_0$ be the interior of the closed region $\cV(\beta_m)\backslash \cW(\beta_m)$. Thus
\[
	\cV_0:=\interior(\cV(\beta_m)\backslash \cW(\beta_m))\]
\[=\{x\in \RR^n: \forall\alpha\subsetneq \beta_m\brk{x,\alpha}< 0\text{ and }\brk{x,\gamma}>0\,\forall\beta_m\onto\gamma, \gamma\neq\beta_m\}.
\]

\begin{prop}\label{prop: D(a) meets V0 iff a lies in R(b)}
For all $\alpha\in\cS_0$, $\alpha\in\cR(\beta_m)$ if and only if $D(\alpha)\cap \cV_0\neq\emptyset$.
\end{prop}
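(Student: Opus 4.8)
The plan is to reinterpret both conditions in terms of (semi)stability, using the description of $D(\beta)$ from the Appendix (Theorem \ref{thm: equivalent definitions of D(b)}): for $x$ with $\brk{x,\beta}=0$ one has $x\in D(\beta)$ if and only if $M_\beta$ is $x$-semistable, i.e. every submodule of $M_\beta$ has $x$-slope $\le 0$, equivalently every quotient has $x$-slope $\ge 0$. Since $M_{\beta_m}$ is a brick, the strict inequalities defining $\cV_0$ should be read as saying that $M_{\beta_m}$ is $x$-\emph{stable} on the slice $\cV_0\cap H(\beta_m)$; note that this slice equals the relative interior of $D(\beta_m)$, because on $H(\beta_m)$ the strict subroot conditions and the strict quotient-root conditions agree by the two equivalent descriptions of $D(\beta_m)$. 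I would prove the two implications separately.

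For the reverse implication I argue the contrapositive: assume $\alpha\notin\cR(\beta_m)$, so by the symmetry between submodules and quotients in the definitions of $D(\cdot)$ and $\cV_0$ we may assume $hom(\alpha,\beta_m)\neq 0$, and fix a nonzero $f\colon M_\alpha\to M_{\beta_m}$ with image $I$. If $f$ is injective then $\alpha\subsetneq\beta_m$ is a proper subroot, so $\cV_0$ forces $\brk{x,\alpha}<0$ while $D(\alpha)$ forces $\brk{x,\alpha}=0$; hence $D(\alpha)\cap\cV_0=\emptyset$. If $f$ were surjective then $\beta_m$ would be a quotient root of $\alpha$, impossible because $\beta_m$ is the last element of the admissible sequence while $\alpha\in\cS_0$ comes earlier (condition (2) of vertical ordering forbids it). In the remaining case $I$ is simultaneously a proper submodule of $M_{\beta_m}$ and a proper quotient of $M_\alpha$; for any $x\in D(\alpha)\cap\cV_0$, semistability of $M_\alpha$ (slope $0$) gives $\brk{x,I}\ge 0$, whereas the stability of $M_{\beta_m}$ on $\cV_0$ gives $\brk{x,I}<0$, a contradiction, so again $D(\alpha)\cap\cV_0=\emptyset$.

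For the forward implication, let $\alpha\in\cR(\beta_m)$. The lateral ordering forces at most one of $ext(\alpha,\beta_m)$, $ext(\beta_m,\alpha)$ to be nonzero, so after possibly interchanging the two roots we may assume $hom(\alpha,\beta_m)=ext(\alpha,\beta_m)=0$; then $(\alpha,\beta_m)$ is an exceptional pair and, by Theorem \ref{thm: wide subcategory gen by hom-orthogonal roots}, $M_\alpha$ and $M_{\beta_m}$ are the two simple objects of the wide subcategory they generate. The corner $Z=D(\alpha)\cap D(\beta_m)$ is then nonempty of codimension $2$, and I would check that $Z$ is not contained in the relative boundary of $D(\beta_m)$: if it were, then $\alpha$, $\beta_m$, and some proper subroot or quotient root of $\beta_m$ would all vanish on $Z$ and hence be linearly dependent, contradicting Proposition \ref{prop: exceptional sequences are lin indep} together with Corollary \ref{cor: W(x0) cap S} applied at a point of $Z$. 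A generic $x\in Z$ therefore lies in the relative interior of $D(\beta_m)$, which equals $\cV_0\cap H(\beta_m)\subseteq\cV_0$, and also lies in $D(\alpha)$; thus $D(\alpha)\cap\cV_0\neq\emptyset$.

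The main obstacle is the final case of the reverse implication: the image $I$ need not be an exceptional module, so $\brk{x,I}<0$ does not follow directly from the exceptional-subroot inequalities defining $\cV_0$. The key point to establish is that those inequalities in fact force \emph{every} proper submodule of $M_{\beta_m}$ to have strictly negative $x$-slope (and, dually, every proper quotient strictly positive $x$-slope) for all $x\in\cV_0$, not merely on the wall $H(\beta_m)$ and not merely for exceptional subroots. This is the non-exceptional strengthening of the semistability description of $D(\beta_m)$, and it is exactly what the Appendix machinery of Theorem \ref{thm: equivalent definitions of D(b)} is designed to deliver; once it is in hand, both the slope contradiction in the reverse direction and the identification of $\cV_0\cap H(\beta_m)$ with the stable locus of $M_{\beta_m}$ in the forward direction go through.
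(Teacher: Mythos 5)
Your reverse implication breaks down at exactly the point you flag, and the fix you propose does not exist. The strengthening you ask for --- that for \emph{every} $x\in\cV_0$, not merely $x\in H(\beta_m)$, every proper submodule $N\subsetneq M_{\beta_m}$ satisfies $\brk{x,\undim N}<0$ --- is not what Theorem \ref{thm: equivalent definitions of D(b)} delivers (that theorem concerns only points with $\brk{x,\beta}=0$ and gives only non-strict inequalities), and it is in fact false. Take the Kronecker quiver $2\to 1$ with valuation $(2,2)$ and the admissible sequence $((1,0),(0,1),(1,2))$, so $\beta_m=(1,2)$: the only proper subroot is $(1,0)$ and the only proper quotient root is $(0,1)$, whence $\cV_0=\{x\in\RR^2: x_1<0<x_2\}$; but $M_{\beta_m}$ has regular (non-exceptional) submodules $N$ with $\undim N=(1,1)$, and $\brk{x,(1,1)}=9>0$ at $x=(-1,10)\in\cV_0$. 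Since your point $x\in D(\alpha)\cap\cV_0$ lies on $H(\alpha)$ but generally not on $H(\beta_m)$, the inequality $\brk{x,\undim I}<0$ in your third case cannot be extracted from the defining inequalities of $\cV_0$. The paper's proof never leaves the world of exceptional roots: from $hom(\beta_m,\alpha)\neq0$ it produces an \emph{exceptional} root $\alpha'$ with $\beta_m\onto\alpha'\subset\alpha$ (dually for $hom(\alpha,\beta_m)\neq0$), and the contradiction $\brk{x,\alpha'}>0$ versus $\brk{x,\alpha'}\le0$ then only ever pairs $x$ against roots to which the definitions of $\cV_0$ and $D(\alpha)$ literally apply. (In finite type your argument can be repaired cheaply: every indecomposable summand of the image $I$ is an exceptional proper submodule of $M_{\beta_m}$, so $\undim I$ is a sum of proper subroots and the strict inequalities add up; but this decomposition step, not Theorem \ref{thm: equivalent definitions of D(b)}, is the missing ingredient, and without it the case where $I$ is neither injective nor surjective image is unproven.)

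The forward implication also has a gap: you assert that $Z=D(\alpha)\cap D(\beta_m)$ is ``nonempty of codimension $2$,'' but that is precisely the nontrivial content of this direction, and your boundary/genericity argument needs $Z$ to have dimension exactly $n-2$, not merely $Z\neq\emptyset$. You also cannot appeal to Corollary \ref{cor: corner sets meet D(bm)}, since the paper derives that corollary from the very proposition you are proving. The paper supplies the missing construction: for $\alpha\in\cR(\beta_m)$ choose a tilting object $T$ of $^\perp\cA(\alpha,\beta_m)$; by Proposition \ref{prop: x0 in interior of D(a)}, since $M_\alpha,M_{\beta_m}$ are the minimal objects of $T^\perp=\cA(\alpha,\beta_m)$, the point $g(\undim T)$ lies in the \emph{interiors} of both $D(\alpha)$ and $D(\beta_m)$, hence already in $D(\alpha)\cap\cV_0$, making your subsequent relative-interior analysis unnecessary. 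So both halves of your outline are structurally parallel to the paper's proof, but each one omits the key representation-theoretic input (the exceptional common subquotient root in one direction, the perpendicular-category $g$-vector construction in the other) and substitutes a claim that is either unproven or, in the first case, false as stated.
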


\begin{proof}
Suppose that $x\in D(\alpha)\cap \cV_0$ and $hom(\beta_m,\alpha)\neq0$. Then there is a subroot $\alpha'$ of $\alpha$ which is also a quotient root of $\beta_m$: $\beta_m\onto \alpha'\subset \alpha$. Since $\alpha\in\cS_0$ we cannot have $\beta_m\subset \alpha$. Therefore $\alpha'$ is a proper quotient of $\beta_m$. Then $\brk{x,\alpha'}>0$ since $x\in\cV_0$ and $\brk{x,\alpha'}\le0$ since $x\in D(\alpha)$ and $\alpha'\subset\alpha$. This is a contradiction. So, $hom(\beta_m,\alpha)=0$. A similar argument shows that $hom(\alpha,\beta_m)=0$. So, $\alpha\in\cR(\beta_m)$.

Conversely, if $\alpha\in\cR(\beta_m)$ then $\alpha,\beta_m$ span a rank 2 wide subcategory $\cA(\alpha,\beta_m)$ of $mod\text-\Lambda$. Choose any tilting object $T$ in the left perpendicular category $^\perp\cA(\alpha,\beta_m)$ (for example the sum of the projective objects). Then the $g$-vector $g(\undim T)$ lies in the interior of both $D(\alpha)$ and $D(\beta_m)$ by Proposition \ref{prop: x0 in interior of D(a)} since $M_\alpha,M_{\beta_m}$ are the minimal objects in $T^\perp=\cA(\alpha,\beta_m)$. So, $g(\undim T)\in \cV_0$. So, $D(\alpha)$ meets $\cV_0$.
\end{proof}

\begin{cor}
The open region $\cV_0$ contains no vertices of the semi-invariant picture $L(\cS_0)$.
\end{cor}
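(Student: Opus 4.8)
The plan is to argue by contradiction, reducing a hypothetical vertex in $\cV_0$ to an impossible configuration of three pairwise hom-orthogonal roots. First I would recall, from the proof of Theorem \ref{thm: L(S) is picture for picture group G(S)}, what a vertex of $L(\cS_0)$ is: a codimension-$2$ point $x_0$ of $CL(\cS_0)$ at which the set $\cB=\{\beta\in\cS_0:x_0\in D(\beta)\}$ has exactly two minimal elements $\alpha_1,\alpha_2$, and these are hom-orthogonal, being the simple objects of a rank-$2$ wide subcategory $\cA(\alpha_1,\alpha_2)$ (this is the $|\cA|=2$ case of that proof, via Corollary \ref{cor: W(x0) cap S}). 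Suppose, for contradiction, that such an $x_0$ lies in $\cV_0$. Then both walls $D(\alpha_1)$ and $D(\alpha_2)$ meet $\cV_0$, so Proposition \ref{prop: D(a) meets V0 iff a lies in R(b)} gives $\alpha_1,\alpha_2\in\cR(\beta_m)$; that is, $\alpha_1,\alpha_2,\beta_m$ are pairwise hom-orthogonal.

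Next I would extract the structural consequences. Because hom-orthogonality to $M_{\beta_m}$ is closed under extensions and $M_{\alpha_1},M_{\alpha_2}$ are the simples of $\cA(\alpha_1,\alpha_2)$, every object of $\cA(\alpha_1,\alpha_2)$ is hom-orthogonal to $M_{\beta_m}$; in particular $M_{\beta_m}\notin\cA(\alpha_1,\alpha_2)$, while every root $\gamma$ with $x_0\in D(\gamma)$ already lies in $\cA(\alpha_1,\alpha_2)$ by Corollary \ref{cor: W(x0) cap S}. Now, since $\alpha_1,\alpha_2,\beta_m$ are pairwise hom-orthogonal, Proposition \ref{prop: exceptional sequences are lin indep} applies exactly as in the proof of Theorem \ref{thm: L(S) is picture for picture group G(S)}: the triple is linearly independent and $D(\alpha_1)\cap D(\alpha_2)\cap D(\beta_m)$ has codimension $\ge 3$. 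As $x_0\in D(\alpha_1)\cap D(\alpha_2)$ is only codimension $2$, this forces $x_0\notin D(\beta_m)$, so in particular $\brk{x_0,\beta_m}\neq0$.

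The contradiction must then come from confronting this with the defining inequalities of $\cV_0$, and this is the step I expect to be the main obstacle. Here I would use that a hom-orthogonal exceptional pair has $ext$ nonzero in at most one direction, so each of $\alpha_1,\alpha_2$ lies in one of the two rank-$(n-1)$ perpendicular categories ${}^{\perp}M_{\beta_m}$ or $M_{\beta_m}^{\perp}$; combined with the fact (from the proof of Proposition \ref{prop: D(a) meets V0 iff a lies in R(b)}) that each $D(\alpha_i)$ actually crosses $D(\beta_m)$ inside $\cV_0$, one wants to conclude that $D(\alpha_1)$ and $D(\alpha_2)$ lie on a common side of $D(\beta_m)$ within the open convex region $\cV_0$, where their intersection $x_0$ would reproduce a genuine vertex of the semi-invariant picture of that rank-$(n-1)$ perpendicular category inside a single convex $\cV_0$-region. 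The delicate point is turning the linear-independence/codimension statement into an honest contradiction with membership in $\cV_0$ (rather than a mere generic-position statement): one must verify that the strict inequalities $\brk{x_0,\alpha}<0$ for $\alpha\subsetneq\beta_m$ and $\brk{x_0,\gamma}>0$ for proper quotient roots $\gamma$, together with convexity of $\cV_0$ and $x_0\notin D(\beta_m)$, cannot coexist with $x_0$ lying on two walls from $\cR(\beta_m)$. I expect the cleanest finish to run through the perpendicular-category separation just described, showing that no such mutual crossing survives in the interior of the convex region $\cV_0$.
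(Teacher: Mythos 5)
Your proposal has a genuine gap, and it starts at the very first step: your characterization of a ``vertex'' of $L(\cS_0)$ is wrong except when $n=3$. A vertex of the semi-invariant picture $L(\cS_0)\subset S^{n-1}$ is a zero-dimensional stratum, i.e.\ a point $x_0$ where the wide subcategory $\cW(x_0)$ of Theorem \ref{thm: W(x0) is wide} has rank $n-1$, so that there are $n-1$ minimal roots $\alpha_1,\cdots,\alpha_{n-1}\in\cS_0$ with $x_0\in D(\alpha_i)$ for all $i$. The codimension-$2$, two-minimal-roots configuration you describe is the local crossing model used in the homotopy argument for Theorem \ref{thm: L(S) is picture for picture group G(S)}; it coincides with ``vertex'' only in the planar case $n=3$. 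Worse, with your definition the corollary is actually \emph{false} for $n\ge 4$: by Corollary \ref{cor: corner sets meet D(bm)} with $k=2$, for hom-orthogonal $\alpha_1,\alpha_2\in\cR(\beta_m)$ the set $D(\alpha_1)\cap D(\alpha_2)\cap D(\beta_m)\cap\cV_0$ is nonempty, and since $\cV_0$ is open and the triple intersection has codimension $3$ (Proposition \ref{prop: exceptional sequences are lin indep}), there are points of $D(\alpha_1)\cap D(\alpha_2)\cap\cV_0$ lying off $D(\beta_m)$ --- honest codimension-$2$ wall crossings inside $\cV_0$. This is exactly why your attempted contradiction cannot close: the configuration you are trying to rule out genuinely exists.

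The second gap is that the contradiction in this corollary is representation-theoretic, not geometric, and your proposal never reaches it (as you acknowledge in the last paragraph). The paper's argument is short: at a vertex, the $n-1$ minimal roots $\alpha_i\in\cS_0$ of $\cW(x_0)$ all satisfy $D(\alpha_i)\cap\cV_0\neq\emptyset$, hence lie in $\cR(\beta_m)$ by Proposition \ref{prop: D(a) meets V0 iff a lies in R(b)} --- this much you have, in your truncated setting. But then $\alpha_1,\cdots,\alpha_{n-1},\beta_m$ are the minimal roots of a wide subcategory of rank $n$, which by Theorem \ref{thm: rank n wide subcategory} must be all of $mod\text{-}\Lambda$; its minimal objects are the simple modules, so $\beta_m$ is simple, contradicting the standing assumption of this section that $\beta_m$ is \emph{not} simple. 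Your proposal invokes neither Theorem \ref{thm: rank n wide subcategory} nor the non-simplicity of $\beta_m$, and these are precisely the missing ingredients. Your intermediate deduction that $x_0\notin D(\beta_m)$, hence $\brk{x_0,\beta_m}\neq0$, is not in tension with anything --- it holds for most points of $\cV_0$ --- so no amount of convexity or perpendicular-category bookkeeping will turn it into a contradiction.
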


\begin{proof}
Suppose that $x_0\in \cV_0$ is a vertex of $L(\cS_0)$. By Theorem \ref{thm: W(x0) is wide}, we have a wide subcategory $\cW(x_0)$ of all modules $V$ so that $x_0\in D(V)$. Since $x_0$ is a vertex of $L(\cS_0)$, the wide subcategory $\cW(x_0)$ must have rank $n-1$ and its minimal objects must lie in $\cS_0$, i.e., $\cW(x_0)=\cA(\alpha_1,\cdots,\alpha_{n-1})$ where $\alpha_i\in\cS_0$. 

By Proposition \ref{prop: D(a) meets V0 iff a lies in R(b)}, each $\alpha_i$ is hom-orthogonal to $\beta_m$. This implies that $\alpha_1,\cdots,\alpha_{n-1}$ together with $\beta_m$ form the minimal roots of a wide subcategory of rank $n$. By Theorem \ref{thm: rank n wide subcategory} this must be all of $mod\text-\Lambda$. So, $\beta_m$ must be a simple root contrary to our initial assumption. Therefore $\cV_0$ contains no vertices of $L(\cS_0)$.
\end{proof}

\begin{cor}\label{cor: corner sets meet D(bm)}
Let $\alpha_1,\cdots,\alpha_k$ be pairwise hom-orthogonal elements of $\cR(\beta_m)$ then the intersection $D(\alpha_1)\cap \cdots\cap D(\alpha_k)\cap D(\beta_m)\cap\cV_0$ is nonempty.
\end{cor}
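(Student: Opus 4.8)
The plan is to prove Corollary \ref{cor: corner sets meet D(bm)} by adapting the construction from the proof of Proposition \ref{prop: D(a) meets V0 iff a lies in R(b)}, but now for a collection of $k$ pairwise hom-orthogonal roots simultaneously rather than a single $\alpha$. First I would observe that since $\alpha_1,\dots,\alpha_k$ are pairwise hom-orthogonal elements of $\cR(\beta_m)$, each is also hom-orthogonal to $\beta_m$. The key structural input is that a set of pairwise hom-orthogonal exceptional roots spans a wide subcategory, so I would like to assert that $\alpha_1,\dots,\alpha_k,\beta_m$ together are the minimal objects of a rank $k+1$ wide subcategory $\cA=\cA(\alpha_1,\dots,\alpha_k,\beta_m)$ of $\mathrm{mod}\text-\Lambda$. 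This requires knowing that appending $\beta_m$ to the pairwise hom-orthogonal collection keeps the whole collection hom-orthogonal, which is exactly the membership $\alpha_i\in\cR(\beta_m)$.

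Next I would mimic the ``conversely'' direction of Proposition \ref{prop: D(a) meets V0 iff a lies in R(b)}: choose a tilting object $T$ in the left perpendicular category ${}^\perp\cA$ (for instance the sum of its projective objects), so that $T^\perp=\cA$ and the minimal objects of $T^\perp$ are precisely $M_{\alpha_1},\dots,M_{\alpha_k},M_{\beta_m}$. By Proposition \ref{prop: x0 in interior of D(a)}, the $g$-vector $g(\undim T)$ then lies in the interior of $D(\alpha_i)$ for every $i$ and in the interior of $D(\beta_m)$, simultaneously. This single point therefore lies in the intersection $D(\alpha_1)\cap\cdots\cap D(\alpha_k)\cap D(\beta_m)$, which furnishes the required nonemptiness of the intersection of walls.

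The remaining task is to verify that this common point $x_0:=g(\undim T)$ actually lies in $\cV_0$, i.e.\ that $\brk{x_0,\alpha}<0$ for all proper subroots $\alpha\subsetneq\beta_m$ and $\brk{x_0,\gamma}>0$ for all proper quotient roots $\gamma$ of $\beta_m$. Here I expect to use that $x_0$ lies in the \emph{interior} of $D(\beta_m)$ relative to the wide subcategory structure: being a $g$-vector with $T^\perp=\cA$ forces $x_0$ to be strictly stable, so the defining stability inequalities for $D(\beta_m)$ hold strictly on every proper sub- and quotient root coming from outside $\cA$. I would argue that no proper subroot $\alpha\subsetneq\beta_m$ or proper quotient root $\gamma$ lies in $\cA$ (since $\beta_m$ is a minimal object of $\cA$), so the inequalities $\brk{x_0,\alpha}\le 0$, $\brk{x_0,\gamma}\ge 0$ defining $D(\beta_m)$ are in fact strict at the interior point $x_0$, placing $x_0\in\cV_0$.

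The main obstacle will be the last step: carefully establishing that the interior $g$-vector point satisfies the \emph{strict} inequalities defining $\cV_0$, rather than merely the weak inequalities defining $D(\beta_m)$. This hinges on correctly invoking Proposition \ref{prop: x0 in interior of D(a)} to place $x_0$ in the relative interior of all the relevant walls and then translating ``relative interior of $D(\beta_m)$'' into the strict stability conditions on proper sub- and quotient roots. I would need to confirm that Proposition \ref{prop: x0 in interior of D(a)} indeed yields an interior point with respect to all $k+1$ walls at once (it is applied to a tilting object whose perpendicular category has all of $M_{\alpha_1},\dots,M_{\alpha_k},M_{\beta_m}$ as minimal objects), and that interiority is compatible with the strict inequalities; once that is in hand, the conclusion $x_0\in D(\alpha_1)\cap\cdots\cap D(\alpha_k)\cap D(\beta_m)\cap\cV_0$ follows immediately.
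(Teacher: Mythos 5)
Your proposal is correct and takes essentially the same approach as the paper: both form the rank $k+1$ wide subcategory $\cA(\alpha_1,\cdots,\alpha_k,\beta_m)$, take a (cluster) tilting object $T$ in $^\perp\cA(\alpha_1,\cdots,\alpha_k,\beta_m)$, and use Proposition \ref{prop: x0 in interior of D(a)} to place $g(\undim T)$ in the walls $D(\alpha_i)$ and in the interior of $D(\beta_m)$, hence in $\cV_0$. The only cosmetic difference is that the paper phrases this via a cluster tilting object and an induction on $k$ reducing to Proposition \ref{prop: D(a) meets V0 iff a lies in R(b)}, whereas you apply the same argument to all $k+1$ roots simultaneously and spell out the passage from ``interior of $D(\beta_m)$'' to the strict inequalities defining $\cV_0$, a step the paper leaves implicit.
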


\begin{proof}
More precisely, let $\cA(\alpha_1,\cdots,\alpha_k,\beta_m)$ be the rank $k+1$ wide subcategory of $mod\text-\Lambda$ with simple objects $M_{\alpha_i},M_{\beta_m}$. Let $T=T_1\oplus \cdots\oplus T_{n-k-1}$ be any cluster tilting object of the cluster category of $^\perp\cA(\alpha_1,\cdots,\alpha_k,\beta_m)$. Then the $g$-vector $g(\undim T)$ is a point in $D(\alpha_1)\cap\cdots\cap D(\alpha_k)\cap D(\beta_m)$ which lies in the interior of $D(\beta_m)$. This can be proved by induction on $k$ using the argument in the proof of Proposition \ref{prop: D(a) meets V0 iff a lies in R(b)}.
\end{proof}

\subsection{Proof of Lemma \ref{lem D: properties of compartments used in proof}}\label{ss2.3: proof of lemma D}

We will show that maximal $\cS_0$-green sequences satisfy the three properties listed in Lemma \ref{lem D: properties of compartments used in proof}.

\begin{prop}\label{prop: all compartments in V0 meet Dbm}
An $\cS_0$-compartment $\cU_\epsilon$ meets $D(\beta_m)$ if and only if $\cU_\epsilon\subseteq\cV_0$.
\end{prop}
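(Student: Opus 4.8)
The plan is to prove the two implications separately, using the convexity and inescapability of $\cW(\beta_m)$ and $\cV(\beta_m)$ established above together with the two boundary identities $\cV(\beta_m)\cap H(\beta_m)=D(\beta_m)$ (Proposition \ref{prop: walls to enter V}) and $D(\beta_m)=H(\beta_m)\cap(\RR^n\setminus\cW(\beta_m))$ (equation \eqref{eq: V cap H is the complement of D}).

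For the implication $\cU_\epsilon\cap D(\beta_m)\neq\emptyset\Rightarrow\cU_\epsilon\subseteq\cV_0$ I would pick $x_0\in\cU_\epsilon\cap D(\beta_m)$. Since $D(\beta_m)\subseteq\cV(\beta_m)$, the point $x_0$ lies in $\cV(\beta_m)$; because every $\cS_0$-compartment lies either inside $\cV(\beta_m)$ or inside its open complement (Proposition \ref{prop: walls to enter V}), the whole of $\cU_\epsilon$ must lie inside $\cV(\beta_m)$, hence, being open, inside $\interior\cV(\beta_m)$. Likewise $x_0\in H(\beta_m)\cap(\RR^n\setminus\cW(\beta_m))$ shows $x_0\notin\cW(\beta_m)$, so by Proposition \ref{prop: walls to enter W} all of $\cU_\epsilon$ avoids $\cW(\beta_m)$. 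Thus $\cU_\epsilon\subseteq\cV(\beta_m)\setminus\cW(\beta_m)$, and since $\cU_\epsilon$ is open, $\cU_\epsilon\subseteq\interior(\cV(\beta_m)\setminus\cW(\beta_m))=\cV_0$. This direction is routine.

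For the converse I first reduce to a straddling statement. On $\cV_0$ the sets $D(\beta_m)$ and $H(\beta_m)$ agree: if $x\in\cV_0\cap H(\beta_m)$ then $\brk{x,\beta_m}=0$ and $\brk{x,\alpha}<0$ for every proper subroot $\alpha\subsetneq\beta_m$, which is exactly the condition $x\in D(\beta_m)$, and the reverse inclusion is immediate from $D(\beta_m)\subseteq H(\beta_m)$. Hence $\cV_0\cap D(\beta_m)=\cV_0\cap H(\beta_m)$, so for $\cU_\epsilon\subseteq\cV_0$ we have $\cU_\epsilon\cap D(\beta_m)=\cU_\epsilon\cap H(\beta_m)$. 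Since $\cU_\epsilon$ is a convex cone, it meets the linear hyperplane $H(\beta_m)$ exactly when the functional $\phi(x)=\brk{x,\beta_m}$ takes both signs on $\cU_\epsilon$; thus it suffices to rule out $\phi\ge0$ on $\cU_\epsilon$, the case $\phi\le0$ being symmetric. Assume $\phi\ge0$ on $\cU_\epsilon$, hence on the closed convex cone $\overline{\cU_\epsilon}$, whose facets lie on the hyperplanes $H(\alpha)$ for the roots $\alpha\in\cR(\beta_m)$ cutting $\cU_\epsilon$ (these are the only $\cS_0$-walls meeting $\cV_0$, by Proposition \ref{prop: D(a) meets V0 iff a lies in R(b)}) together with the defining hyperplanes of $\cV_0$. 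By convex (Farkas) duality for the nondegenerate form $\brk{-,-}$, this forces $\beta_m$ to be a nonnegative combination of the inward facet normals $\sigma(\alpha)\alpha$ (with $\sigma(\alpha)\in\{+,-\}$), $-\alpha'$ for $\alpha'\subsetneq\beta_m$, and $\gamma$ for proper quotient roots $\gamma$. Pairing this certificate with a corner point $p\in\overline{\cU_\epsilon}\cap D(\beta_m)$ and using $\brk{p,\beta_m}=0$ makes every term in the expansion nonnegative with sum zero: the strict signs $\brk{p,\alpha'}<0$ and $\brk{p,\gamma}>0$ on $\cV_0$ kill the boundary coefficients, leaving $\beta_m\in\operatorname{span}\{\alpha\in\cR(\beta_m):p\in D(\alpha)\}$. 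By Proposition \ref{prop: exceptional sequences are lin indep} and the absence of $\cS_0$-vertices in $\cV_0$, these roots are independent and hom-orthogonal to $\beta_m$, so $\beta_m$ cannot lie in their span, a contradiction. The corner point $p$ is supplied by Corollary \ref{cor: corner sets meet D(bm)}.

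The hard part is the last step: matching the abstract duality certificate to an actual face of the \emph{given} chamber $\cU_\epsilon$, that is, producing a corner point of $D(\beta_m)$ lying in $\overline{\cU_\epsilon}$ with the correct signs $\sigma(\alpha)\brk{\cdot,\alpha}\ge0$, rather than merely somewhere in $\cV_0$. If this bookkeeping proves awkward, the cleaner conceptual route is to identify $\cV_0$, with its internal walls $D(\alpha)$ for $\alpha\in\cR(\beta_m)$ and the extra wall $D(\beta_m)=H(\beta_m)\cap\cV_0$, with an open piece of the semi-invariant picture of the wide subcategory generated by $\cR(\beta_m)$ and $\beta_m$, in which $\beta_m$ is a \emph{simple} object. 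There $D(\beta_m)$ is a genuine coordinate hyperplane, which therefore meets every chamber, yielding the straddling, and hence $\cU_\epsilon\cap D(\beta_m)\neq\emptyset$, directly.
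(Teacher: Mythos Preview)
Your first implication is fine and in fact more carefully stated than in the paper, which treats that direction as implicit in the earlier propositions.

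The genuine gap is exactly the one you flag. In your Farkas argument you need a point $p\in\overline{\cU_\epsilon}\cap D(\beta_m)$ to kill the boundary terms, but the existence of such a $p$ is essentially the statement you are trying to prove. Corollary~\ref{cor: corner sets meet D(bm)} only places the corner point somewhere in $\cV_0$, not in the closure of your particular chamber, and your alternative ``identify $\cV_0$ with a semi-invariant picture for the wide subcategory'' route, while morally right, would itself require a nontrivial argument (you would need to show that the $\cS_0$-chamber decomposition of $\cV_0$ is exactly the chamber decomposition for $\cR(\beta_m)\cup\{\beta_m\}$, which is not immediate).

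The paper avoids both the duality bookkeeping and the identification by a direct geometric trick. Take $k$ maximal so that $\overline{\cU_\epsilon}$ contains a point $z$ of some corner $D(\alpha_1)\cap\cdots\cap D(\alpha_k)$ with the $\alpha_i\in\cR(\beta_m)$ pairwise hom-orthogonal. Corollary~\ref{cor: corner sets meet D(bm)} then supplies a point $w$ in that same corner which also lies on $D(\beta_m)\cap\cV_0$. The segment $\gamma(t)=(1-t)z+tw$ stays in the convex set $D(\alpha_\ast)\cap\cV_0$; push it off by a tiny $\delta$ with $\brk{\delta,\beta_m}=0$ and $z+\delta\in\cU_\epsilon$. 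If $\gamma(t)+\delta$ ever left $\cU_\epsilon$, at the exit time it would hit a new wall $D(\alpha_0)$ with $\alpha_0$ hom-orthogonal to the $\alpha_i$, producing a $(k{+}1)$-fold corner in $\overline{\cU_\epsilon}$ and contradicting maximality of $k$. Hence $\gamma(1)+\delta\in\cU_\epsilon\cap D(\beta_m)$. This maximal-corner argument is the missing idea: it is what anchors the point produced by Corollary~\ref{cor: corner sets meet D(bm)} to the specific chamber $\cU_\epsilon$ rather than just to $\cV_0$.
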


Before proving this we show that this implies the first property in Lemma \ref{lem D: properties of compartments used in proof}. Recall that this states:\vs2

D(1) \emph{In every maximal $\cS_0$-green sequence in Class 2, the compartments which meet $D(\beta_m)$ are consecutive.}\vs2

\begin{proof}[Proof of D(1)]
Let $\cU_{\epsilon(i)}$ be a maximal $\cS_0$-green sequence. Let $p,q$ be minimal so that $\cU_{\epsilon(p)}\subseteq\cV(\beta_m)$ and $\cU_{\epsilon(q)}\subseteq\cW(\beta_m)$. When the green sequence is in Class 2, $p<q$. Since $\cV(\beta_m)$ is inescapable, $\cU_{\epsilon(i)}\subseteq\cV(\beta_m)$ iff $p\le i$. Since $\cW(\beta_m)$ is inescapable, $\cU_{\epsilon(i)}\subseteq\cV_0$ iff $p\le i<q$. So, the compartments of the green sequence which lie in $\cV_0$ are consecutive. By the proposition these are the compartments which meet $D(\beta_m)$.
\end{proof}

\begin{proof}[Proof of Proposition \ref{prop: all compartments in V0 meet Dbm}] Let $\cU_\epsilon$ be an $\cS_0$-compartment in $\cV_0$. Let $x\in \cU_\epsilon$. If $\brk{x,\beta_m}=0$ then $x\in H(\beta_m)\cap \cV_0\subset D(\beta_m)$ and we are done. So, suppose $\brk{x,\beta_m}\neq0$. Pick a point $y\in D(\beta_m)\cap \cV_0$ and take the straight line from $x$ to $y$. Since $\cV_0$ is convex, this line is entirely contained in $\cV_0$. If the line is not in $\cU_\epsilon$ then it must meet an internal wall $D(\alpha)$ on the boundary of $\cU_\epsilon$. By Proposition \ref{prop: D(a) meets V0 iff a lies in R(b)}, $\alpha\in\cR(\beta_m)$.

Let $k$ be maximal so that the closure of $\cU_\epsilon$ contains a point $z\in D(\alpha_\ast)=D(\alpha_1)\cap\cdots\cap D(\alpha_k)$ where $\alpha_1,\cdots,\alpha_k\in \cR(\beta_m)$ are pairwise hom-orthogonal. Then, by Corollary \ref{cor: corner sets meet D(bm)}, $D(\alpha_\ast)\cap D(\beta_m)\cap \cV_0$ is nonempty. Let $w$ be an element. Since $D(\alpha_\ast)$ and $\cV_0$ are both convex, $D(\alpha_\ast)\cap \cV_0$ contains the straight line $\gamma(t)=(1-t)z+tw$, $0\le t\le 1$.

Let $\delta$ be a very small vector so that $\brk{\delta,\beta_m}=0$ and $z+\delta\in\cU_\epsilon$. Consider the line $\gamma(t)+\delta$. This is in $\cU_\epsilon$ for $t=0$ and lies in $D(\beta_m)$ when $t=1$. This proves the proposition if $\gamma(t)+\delta\in \cU_\epsilon$ for all $0\le t\le1$. So, suppose not. Let $t_0$ be minimal so that this open condition fails. Then the line $\gamma(t)$ meets another wall at $t=t_0$ and $\gamma(t_0)$ will be a point in the closure of $\cU_\epsilon$ which meets a codimension $k+1$ set $D(\alpha_0)\cap D(\alpha_1)\cap\cdots\cap D(\alpha_k)$ where $\alpha_0\in \cS_0$ is hom-orthogonal to the other roots $\alpha_i$. (Take $\alpha_0$ of minimal length among the new roots so that $\gamma(t_0)\in D(\alpha_0)$.) This contradicts the maximality of $k$. So, there is no point $t_0$ and $\gamma(1)+\delta\in \cU_\epsilon\cap D(\beta_m)$ as claimed.
\end{proof}

We have already shown property (2) in Lemma \ref{lem D: properties of compartments used in proof}: Any maximal $\cS_0$-green sequence of Class 2 crosses a wall $D(\gamma)$ at some point to enter region $\cV_0$, passes through several internal walls of $\cV_0$, then exists $\cV_0$ by a wall $D(\alpha)$ of $\cW(\beta_m)$. By Propositions \ref{prop: walls to enter V}, \ref{prop: walls to enter W} $\gamma$ is a quotient root of $\beta_m$ and $\alpha$ is a subroot of $\beta_m$, both not hom-orthogonal to $\beta_m$. By Proposition \ref{prop: D(a) meets V0 iff a lies in R(b)} the internal walls of $\cV_0$ are $D(\beta)$ where $\beta\in\cR(\beta_m)$. So, property (2) in Lemma \ref{lem D: properties of compartments used in proof} holds.

The last property we need to verify in Lemma \ref{lem D: properties of compartments used in proof} is the following.\vs2

D(3)\emph{
Suppose that the two $\cS_0$-compartments $\cU_{\epsilon(1)}$ and $\cU_{\epsilon(2)}$ meet along a common internal wall $D(\beta_j)$. Then the $\cS$-compartments $\cU_{\epsilon(1),+},\cU_{\epsilon(2),+}$ meet along the common internal wall $D(\beta_j)$ and the $\cS$-compartments $\cU_{\epsilon(1),-},\cU_{\epsilon(2),-}$ also meet along $D(\beta_j)$.
}\vs2

\begin{proof}
Let $\cS'$, $\cS_0'$ be $\cS,\cS_0$ with $\beta_j$ deleted. Then $\cS',\cS_0'$ are weakly admissible. Since $\beta_j\notin\cS_0'$, the two $\cS_0$-compartments $\cU_{\epsilon(1)}$ and $\cU_{\epsilon(2)}$ merge to form one $\cS_0'$-compartment $\cU_\epsilon$. This compartment meets $D(\beta_m)$ so it breaks up into two $\cS'$-compartments $\cU_{\epsilon,+}$ and $\cU_{\epsilon,-}$. We know that $D(\beta_j)$ must divide these two $\cS'$-compartments into four $\cS$-compartments since $\cU_{\epsilon(1)}$, $\cU_{\epsilon(2)}$ are both divided into two parts by $D(\beta_m)$. Since $\cS'$-compartments are convex by Proposition \ref{prop: weakly admissible S gives convex compartments}, this can happen only if $D(\beta_j)$ meets both $\cS'$-compartments and forms the common wall separating the two halves of each.
\end{proof}

\section{Planar pictures and group theory}\label{sec3: Pictures}

In this section we will use planar pictures to prove the two properties of the group $G(\cS)$ that we are using: Lemmas \ref{lem C: beta m occurs a fixed number of times} and \ref{lem E: beta m only commutes with hom orthogonal roots}. The key tool will be the ``sliding lemma'' (Lemma \ref{lem: sliding lemma}) which comes from the first author's PhD thesis \cite{I:thesis}. Unless otherwise stated, all pictures in this section will be planar. We begin with a review of the topological definition of a (planar) picture with special language coming from the fact that all relations in our group $G(\cS)$ are commutator relations. Since this section uses only planar diagrams, we feel that theorems can be proven using diagrams and topological arguments. Algebraic versions of these arguments using HNN extensions, geometric realizations of categories and cubical CAT(0) categories can be found in other papers which prove similar results for pictures of arbitrary dimension (\cite{IT13}, \cite{Cat0}, \cite{IOTW4}). The first example of a picture group for a picture group of a Dynkin quiver appears in a paper by Jean-Louis Loday \cite{Loday} where the picture group for the quiver of type $A_n$ with straight orientation is constructed and the picture space (the $K(\pi,1)$ for the picture group) is also constructed.

\subsection{Planar pictures}\label{ss3.1: pictures in general}

Suppose that the group $G$ has a presentation $G=\left<\cX\,|\,\cY\right>$. This means there is an exact sequence \[
	R_\cY\into F_\cX\onto G
\]
where $F_\cX$ is the free group generated by the set $\cX$ and $R_\cY\subseteq F_\cX$ is the normal subgroup generated by the subset $\cY\subseteq F_\cX$. Then $G$ is the fundamental group of a 2-dimensional CW-complex $X^2$ given as follows. Let $X^1$ denote the 1-dimensional CW-complex having a single 0-cell $e^0$, one 1-cell $e^1(x)$ for every generator $x\in\cX$ attached on $e^0$. Then $\pi_1X^1=F_\cX$ and any $f\in F_\cX$ gives a continuous mapping $\eta_f:S^1\to X^1$ given by composing the loops corresponding to each letter in the unique reduced expression for $f$. Here $S^1=\{z\in\CC:||z||=1\}$, $1\in S^1$ is the basepoint and $S^1$ is oriented counterclockwise.

Let $X^2$ denote the 2-dimensional CW-complex given by attaching one 2-cell $e^2(r)$ for every relations $r\in\cY$ using an attaching map
\[
	\eta_r:S^1\to X^1
\]
homotopic to the one described above. We choose each mapping $\eta_r$ so that it is transverse to the centers of the 1-cells of $X^1$. So, the inverse images of these center points are fixed finite subsets of $S^1$. The relation $r$ is given by the union of these finite sets, call it $E_r\subset S^1$, together with a mapping $\lambda:E_r\to \cX\cup \cX^{-1}$ indicating which 1-cell the point goes to and in which direction the image of $\eta_r$ traverses that 1-cell. Then we have:
\[
	r=\prod_{x\in E_r}\lambda(x)\in F_\cX.
\]
The circle $S^1$ is the boundary of the unit disk $D^2=\{x\in \CC: ||x||\le 1\}$. Let $CE_r\subset D^2$ denote the cone of the set $E_r$:
\[
	CE_r:=\bigcup_{x\in E_r}\{ax\in D^2: 0\le a\le 1\}.
\]
This is the union of the straight lines from all $x\in E_r$ to $0\in D^2$.
\begin{figure}[htbp]
\begin{center}
\begin{tikzpicture}
\clip (-6.4,-1) rectangle (6.5,1.2);
	\draw (0,0) circle[radius=.7];
	\draw[thick] (1.5,-.5) -- (0,0) -- (-1,1);
	\draw[thick] (1,1) -- (0,0) -- (-1,-1);
	\draw (.85,.1) node{*} (1.2,-.4) node[below]{$z$} (.7,1) node{$x$};
	\draw (-1,-.7) node{$x$} (-.7,1) node{$y$};
	\draw (-5,.5) node{Example:};
	\draw (-5,0) node{$r=x^{-1}yxz$};
\end{tikzpicture}
\caption{The cone of $E_r$ in $D^2$ is the part inside the circle $S^1$. The asterisks $\ast$ indicates the position of the basepoint $1\in S^1$. The labels are drawn on the negative side of each edge.}
\label{figCEr}
\end{center}
\end{figure}

A picture is a geometric representation of a continuous pointed mapping $\theta:S^2\to X^2$ where pointed means preserving the base point. A (pointed) deformation of a picture represents a homotopy of such a mapping. Deformation classes of pictures form a module over the group ring $\ZZ G$.

\begin{defn}\label{def: planar picture}
Given a group $G$ with presentation $G=\left<\cX\,|\,\cY\right>$ and fixed choices of $E_r\subset S^1$, $\lambda:E^4\to \cX\cup\cX^{-1}$, a \und{picture} for $G$ is defined to be a graph $L$ embedded in the plane $\RR^2$ with circular edges allowed, together with: 
\begin{enumerate}
\item a label $x\in \cX$ for every edge in $L$,
\item a normal orientation for each edge in $L$,
\item a label $r\in \cY\cup \cY^{-1}$ for each vertex in $L$,
\item for each vertex $v$, a smooth ($C^\infty$) embedding $\theta_v:D^2\to \RR^2$ sending 0 to $v$
\end{enumerate} 
satisfying the following where $E(x)$ denotes the union of edges labeled $x$.
\begin{enumerate}
\item[(a)] Each $E(x)$ is a smoothly embedded 1-manifold in $\RR^2$ except possibly at the vertices.
\item[(b)] For each vertex $v\in L$, $\theta_v^{-1}(E(x))\subseteq CE_r$ is equal to the cone of $\lambda^{-1}(\{x,x^{-1}\})\subset E_r$.
\end{enumerate}
The image of $1\in S^1$ under $\theta_v:D^2\to \RR^2$ will be called the \und{basepoint direction} of $v$ and will be indicated with $\ast$ when necessary.
\end{defn} 

The embedding $\theta_v$ has positive, negative orientation when $r\in\cY$, $r\in\cY^{-1}$, respectively.

One easy consequence of this definition is the following.

\begin{prop}
Given a picture $L$ for $G$, there is a unique label $g(U)\in G$ for each component $U$ of the complement of $L$ in $\RR^2$ having the following properties.
\begin{enumerate}
\item $g(U_\infty)=1$ for the unique unbounded component $U_\infty$.
\item $g(V)=g(U)x$ if the regions $U,V$ are separated by an edge labeled $x$ and oriented towards $V$.
\end{enumerate}
\end{prop}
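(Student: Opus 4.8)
The plan is to prove existence and uniqueness of the region labels $g(U)$ by reducing everything to local wall-crossings, exactly paralleling the argument already used in the proof of Theorem \ref{thm: L(S) is picture for picture group G(S)}. First I would address \textbf{uniqueness}, which is immediate: any two components $U, V$ can be joined by a path in general position meeting $L$ transversally in finitely many interior points of edges, and property (2) forces $g(V)$ to equal $g(U)$ multiplied by the ordered product of the generators $x^{\pm 1}$ labelling the edges crossed (with sign according to the normal orientation). Since $U_\infty$ is connected to every other component and $g(U_\infty)=1$ is prescribed, the value $g(U)$ is determined by reading off the edge labels along \emph{any} such path. So once existence is established, uniqueness follows because the formula computing $g(U)$ in terms of crossings is forced.

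The real content is \textbf{existence}, i.e.\ \emph{well-definedness}: the product of edge labels along a transverse path from $U_\infty$ to $U$ must be independent of the chosen path. First I would fix a generic path $\gamma$ from a basepoint in $U_\infty$ to a point in $U$ and define $g(U)$ as the ordered product of the crossed edge labels $x^{\pm1}$. Then, given two such paths $\gamma, \gamma'$, I would use that $\RR^2$ is simply connected to produce a homotopy $h:[0,1]^2\to\RR^2$ between them, made smooth and transverse to $L$. The generic singularities of such a homotopy relative to the picture $L$ are of two types: (i) the path sweeps across a smooth interior point of a single edge $E(x)$ where a small cancelling pair $x x^{-1}$ (or $x^{-1}x$) is created or destroyed, which does not change the product in the free group and hence not in $G$; and (ii) the path sweeps across a vertex $v$ of $L$. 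Crossing a vertex $v$ labelled by a relation $r\in\cY\cup\cY^{-1}$ changes the ordered sequence of crossed labels by replacing one side of the relator word $r$ by the other. By condition (b) of Definition \ref{def: planar picture}, the local picture near $v$ is precisely the cone $CE_r$, so the word read off as the path passes on one side of $v$ differs from the word read off on the other side exactly by the relation $r=1$ (with appropriate orientation sign coming from whether $r\in\cY$ or $r\in\cY^{-1}$). Hence the product in $G$ is unchanged.

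Concretely I would organize the argument by choosing the homotopy $h$ so that the composite $h$ with the projection recording the crossed labels is, at each generic time, a word in $F_\cX$; the two move types above are the only ways this word can change, and each either leaves it fixed in $F_\cX$ or multiplies by a relator, hence fixes its image in $G=\langle \cX \mid \cY\rangle$. Therefore $g(U)\in G$ is well defined, and property (2) holds by construction since crossing a single edge labelled $x$ oriented towards $V$ appends the letter $x$. Property (1) is the chosen normalization.

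The step I expect to be the main obstacle is the careful bookkeeping of \emph{orientations and basepoint directions} in the vertex-crossing move (ii). One must verify that the relator word read off around $v$ genuinely matches $r$ (rather than a cyclic permutation or inverse) and that the sign is governed correctly by the orientation of $\theta_v$; this is exactly where the data of the basepoint direction $\ast$ and the positive/negative orientation of $\theta_v$ (the remark following Definition \ref{def: planar picture} that $\theta_v$ has positive orientation iff $r\in\cY$) enter. Once one fixes the convention that the word is read starting at the basepoint direction and proceeding counterclockwise, the two sides of a path sweeping past $v$ read the two subwords of $r$ whose concatenation (in the correct order) is the full cyclic word of $r$, so passing the vertex multiplies the accumulated element by $r^{\pm1}=1$ in $G$. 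Everything else is the standard transversality and general-position machinery already invoked in the proof of Theorem \ref{thm: L(S) is picture for picture group G(S)}.
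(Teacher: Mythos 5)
Your proposal is correct and follows essentially the same route as the paper's proof: define $g(U)$ as the ordered product of labels crossed by a transverse path from $U_\infty$, and establish well-definedness by deforming one path to another and checking that pushing the path across a vertex changes the read-off word only by a relator (the paper's one-sentence argument), so the product in $G$ is unchanged. Your version merely spells out the details the paper leaves implicit (the edge-tangency cancellation move and the orientation/basepoint bookkeeping), which is a fair expansion rather than a different method.
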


\begin{proof}
For any region $U$, choose a smooth path from $\infty$ to any point in $U$. Make the path transverse to all edge sets. Then let $g(U)=x_1^{\epsilon_1}\cdots x_m^{\epsilon_m}$ if the path crosses $m$ edges labeled $x_1,\cdots,x_m$ with orientations given by $\epsilon_i$. This is well defined since any deformation of the path which fixes the endpoints and which pushes it through a vertex will not change the product $g(U)$ since the paths on either side of the vertex have edge labels giving a relation in the group and therefore give the same product of labels in the group $G$.
\end{proof}

\begin{rem} Any particular smooth path $\gamma$ from $U_\infty$ to $U$ gives a lifting $f_\gamma(U)$ of $g(U)$ to the free group $F_\cX$.
\end{rem}

It is well-known that the set of deformation classes of pictures for any group $G$ is a $\ZZ G$-module $P(G)$. (See Theorem \ref{thm: PG is kernel of d2} and Corollary \ref{cor: long exact sequence for PG} below.)

The action of the group $G$ is very easy to describe. Given any picture $L$ and any generator $x\in \cX$, the pictures $xL, x^{-1}L$ are given by enclosing the set $L$ with a large circle, labeling the circle with $x$ and orienting it inward or outward, respectively. Addition of pictures is given by disjoint union of translates of the pictures.

To define the equivalence relation which we call ``deformation equivalence'' of pictures, it is helpful to associate to each picture $L$ an element $\psi(L)\in \ZZ G\!\left<\cY\right>$ where $\ZZ G\!\brk{\cY}$ is the free $\ZZ G$ module generated by the set of relations $\cY$. This is given by
\[
	\psi(L)=\sum_{v_i} g(v_i) \brk{r_i}
\]
where the sum is over all vertices $v_i$ of $L$, $r_i\in \cY\cup\cY^{-1}$ is the relation at $v_i$, $g(v_i)\in G$ is the group label at the basepoint direction of $v_i$ and $\brk{r^{-1}}=-\brk r$ by definition.

\begin{defn}
A \und{deformation} $L_0\simeq L_1$ of pictures for $G$ is defined to be a sequence of allowable moves given as follows.
\begin{enumerate}
\item Isotopy. $L_0\simeq L_1$ if there is an orientation preserving diffeomorphism $\varphi:\RR^2\to \RR^2$ so that $L_1=\varphi(L_0)$ with corresponding labels. By isotopy we can make the images of the embeddings $\theta_v:D^2\to\RR^2$ disjoint and arbitrarily small.
\item Smooth concordance of edge sets. There are two concordance moves:
\begin{enumerate}
	\item If $L_0$ contains a circular edge $E$ with no vertices and $L_0$ does not have any point in the region enclosed by $E$ then $L_0\simeq L_1$ where $L_1$ is obtained from $L_0$ by deleting $E$.
	\item If $U$ is a connected component of $\RR^2-L_0$ and two of the walls of $U$ have the same label $x$ oriented in the same way (inward towards $U$ or outward) then, choose a path $\gamma$ in $U$ connecting points on these two edges then perform the following modification of $L_0$ in a neighborhood of $\gamma$ to obtain $L_1\simeq L_0$.
\begin{figure}[htbp]
\begin{center}
{
\setlength{\unitlength}{1cm}
%\centerline
{\mbox{
\begin{picture}(7,2)
      \thicklines
%    \thinlines
  %
%    \put(-.1,0){\circle*{.05}}
    \put(0,0){
\qbezier(0,0)(1,1)(0,2)    
    \put(2,0){
\qbezier(0,0)(-1,1)(0,2)   }
\put(0,1.4){$x$}
\put(2,0.4){$x$}
\put(-1,1.8){$L_0:$}
%      \thicklines
   \thinlines
\put(.5,1){\line(1,0)1}
      \thicklines
%    \thinlines
\put(.9,1.1){$\gamma$}
}
\put(3.3,1){$\then$}
    \put(5.5,0){
\qbezier(0,0)(1,1)(2,0)    
    \put(0,0){
\qbezier(0,2)(1,1)(2,2)   }
\put(0,1.4){$x$}
\put(2,0.4){$x$}
\put(-1,1.8){$L_1:$}
}
\end{picture}}
}}
\end{center}
\end{figure}
\end{enumerate}
\item Cancellation of vertices. Suppose that two vertices $v_0,v_1$ of $L_0$ have inverse labels $r,r^{-1}$. Suppose that there is a path $\gamma$ disjoint from $L_0$ connecting the basepoint directions of $v_0,v_1$. Let $V$ be the union of the $\theta_{v_0}(D^2),\theta_{v_1}(D^2)$ and a small neighborhood of the path $\gamma$. We can choose $V$ to be diffeomorphic to $D^2$. Then $L_0\simeq L_1$ if $L_0,L_1$ are identical outside of the region $V$ and $L_1$ has no vertices in $V$. (The two vertices in $V\cap L_0$ cancel.)
\end{enumerate}
\begin{center}
\begin{tikzpicture}
	\draw (0,0) node{$\then$};
	\draw (-6,0) circle[radius=.7]  (-2.5,0) circle[radius=.7] (2.5,0) circle[radius=.7] (6,0) circle[radius=.7];
	\draw (-5.7,0)node{$\ast$}  -- (-2.8,0) node{$\ast$};
	\draw[thick] (-6,0)  +(-1,.8)-- +(0,0) +(3.5,0) -- +(4.5,.8);
	\draw[thick] (-6,0) +(-1,-1)-- +(0,0) +(3.5,0) -- +(4.5,-1);
	\draw[thick] (-6,0) +(-1.3,-.2)-- +(0,0)  +(3.5,0) -- +(4.8,-.2);
	\draw (-6,0) +(1.75,0) node[above]{$\gamma$};
	\draw (-6,0) +(-1,.8) node[above]{$x$} +(-1,-1) node[above]{$z$} +(-1.3,-.2) node[above]{$y$} ;
	\draw (-2.5,0) +(1,.8) node[above]{$x$} +(1,-1) node[above]{$z$} +(1.3,-.2) node[above]{$y$} ;
	\draw (-6.2,1.5) node{$L_0$};
	\draw (2,1.5) node{$L_1$};
	\draw[thick] (2.5,0)  +(-1,.8).. controls (2.5,0) and (5,0) .. +(4.5,.8);
	\draw[thick] (2.5,0) +(-1,-1).. controls (2.5,0) and (5,0) .. +(4.5,-1);
	\draw[thick] (2.5,0) +(-1.3,-.2).. controls (2.5,0) and (5,0) ..  +(4.8,-.2);
	\draw (2.5,0) +(-1,.8) node[above]{$x$} +(-1,-1) node[above]{$z$} +(-1.3,-.2) node[above]{$y$} ;
	\draw (6,0) +(1,.8) node[above]{$x$} +(1,-1) node[above]{$z$} +(1.3,-.2) node[above]{$y$} ;
\end{tikzpicture}
\end{center}
\end{defn}

Concordance means $L_0,L_1$ have the same vertex sets and are equal in a neighborhood of each vertex and that $f_{\gamma_i}\in F_\cX$ are equal for $L_0$, $L_1$ for some (and thus every) choice of paths $\gamma_i$ disjoint from vertices from $\infty$ to the basepoint direction of each vertex of $L_0$. The same paths work for $L_1$ since $L_0,L_1$ have the same vertex set.

\begin{thm}\cite{JW}\cite[Prop 7.4]{I:dilog1}\label{thm: PG is kernel of d2}
$L_0,L_1$ are deformation equivalent if and only if $\psi(L_0)=\psi(L_1)$. Furthermore, the set of possible values of $\psi(L)$ for all pictures $L$ is equal to the kernel of the mapping
\[
	\ZZ G\!\brk \cY\xrightarrow{d_2} \ZZ G\!\brk\cX
\]
where $d_2\brk r=\sum \partial_x r\!\brk x$, where $\partial_x$ is the Fox derivative of $r$ with respect to $x$. %and $\brk{x^{-1}}:=-x^{-1}\brk x$.
\end{thm}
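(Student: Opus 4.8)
The plan is to recognize that a picture for $G=\langle\cX\mid\cY\rangle$ is precisely a combinatorial encoding of a pointed map $\theta\colon S^2\to X^2$ made transverse to the dual skeleton of $X^2$, and that the three deformation moves of the preceding definition are exactly the elementary moves of a transverse homotopy. Granting this dictionary (the content of \cite{I:pictures}, \cite{JW}, \cite[Prop 7.4]{I:dilog1}), the set of deformation classes of pictures is $\pi_2(X^2)=\pi_2(X^2,e^0)$ as a $\ZZ G$-module. I would then pass to the universal cover $p\colon\widetilde X^2\to X^2$. Since $p$ induces isomorphisms on $\pi_n$ for $n\ge2$ and $\widetilde X^2$ is simply connected, the Hurewicz theorem gives $\pi_2(X^2)=\pi_2(\widetilde X^2)\xrightarrow{\cong}H_2(\widetilde X^2)$.

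The next step is to compute $H_2(\widetilde X^2)$ cellularly. Lifting the single $0$-cell, the $1$-cells $e^1(x)$, and the $2$-cells $e^2(r)$ of $X^2$ along $p$ gives free $\ZZ G$-bases indexed by the deck group $G$, so the cellular chain complex of $\widetilde X^2$ is $\ZZ G\brk{\cY}\xrightarrow{d_2}\ZZ G\brk{\cX}\xrightarrow{d_1}\ZZ G$. The classical computation via free differential calculus identifies $d_2\brk r=\sum_x\partial_x r\,\brk x$ as the Fox Jacobian and $d_1\brk x=(x-1)$. Because $\widetilde X^2$ is $2$-dimensional there is no $\partial_3$, so $H_2(\widetilde X^2)=\ker d_2$, which already identifies $\pi_2(X^2)$ with $\ker d_2$ as a $\ZZ G$-module.

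It remains to check that, under this chain of isomorphisms, the invariant $\psi$ is exactly the Hurewicz map. This is the geometric heart: each vertex $v_i$ of a picture $L$ is a transverse preimage of the barycenter of a $2$-cell $e^2(r_i)$, and its group label $g(v_i)\in G$ at the basepoint direction records which lift $g(v_i)\cdot\widetilde e^2(r_i)$ of that cell is covered, with sign fixed by the orientation of $\theta_{v_i}$ (positive for $r_i\in\cY$, negative for $r_i\in\cY^{-1}$). Summing these lifted $2$-cells is precisely the cellular $2$-cycle representing $[\theta]$, that is, $\psi(L)=\sum_i g(v_i)\brk{r_i}$. A direct combinatorial check confirms $\psi(L)\in\ker d_2$ independently of Hurewicz: one computes $d_2\psi(L)=\sum_i g(v_i)\sum_x\partial_x r_i\,\brk x$, groups the terms into the contributions of the edges of $L$ meeting each $v_i$, and observes that every edge has two ends (or is a vertex-free circle contributing nothing) carrying the same region labels on its two sides, so these contributions cancel in pairs.

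Given all this, both assertions follow. The equivalence $L_0\simeq L_1\iff\psi(L_0)=\psi(L_1)$ is the injectivity of the Hurewicz isomorphism on deformation classes, and the image of $\psi$ equals $H_2(\widetilde X^2)=\ker d_2$ by its surjectivity; the inclusion $\ker d_2\subseteq\im\psi$ can also be seen by hand, realizing any cycle $\sum g_i\brk{r_i}$ as a picture by placing a labeled vertex for each term and joining edge-ends in matched pairs, the cycle condition guaranteeing that a consistent matching exists. I expect the main obstacle to be establishing the dictionary of the first paragraph rigorously: one must show every pointed map $S^2\to X^2$ is homotopic to a transverse one whose dual data form a picture, and, harder, that a generic homotopy between two transverse maps decomposes into exactly the three allowable moves (births and deaths of circles, finger and handle slides along a region, and cancellation of oppositely oriented vertex pairs). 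This is a stratified-transversality argument in the spirit of Cerf theory, and matching its elementary moves one-to-one with the combinatorially defined deformation moves is the delicate point.
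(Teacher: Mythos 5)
Your proposal is correct and takes essentially the same route the paper itself indicates: the paper defers the proof of this theorem to \cite{I:pictures}, \cite{JW}, \cite[Prop 7.4]{I:dilog1}, but the Remark following Corollary \ref{cor: long exact sequence for PG} records precisely your chain of identifications $P(G)=H_2(\widetilde{X^2})=\pi_2(\widetilde{X^2})=\pi_2(X^2)$, with the Fox-derivative complex $\ZZ G\brk{\cY}\xrightarrow{d_2}\ZZ G\brk{\cX}\xrightarrow{d_1}\ZZ G$ as the cellular chain complex of the universal cover. The transversality dictionary you rightly flag as the delicate point (pictures as transverse maps $S^2\to X^2$, deformations as generic homotopies decomposed into the three allowable moves) is exactly the content of the cited references rather than anything reproved in this paper.
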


The \und{Fox derivative} of $w\in F_\cX$ is given recursively on the reduced length of $w$ by
\begin{enumerate}
\item $\partial_x(x)=1$, $\partial_x(x^{-1})=-x^{-1}$.
\item $\partial_x(y)=0$ if $y\in\cX\cup \cX^{-1}$ is not equal to $x,x^{-1}$.
\item $\partial_x(ab)=\partial_x a+a\partial_xb$ for any $a,b\in F_\cX$.
\end{enumerate}

\begin{defn} The \und{group of pictures} $P(G)$ is defined to be the group of deformation classes of pictures for $G$. 
\end{defn}

\begin{cor}\label{cor: long exact sequence for PG}
There is an exact sequence of $\ZZ G$-modules
\[
	0\to P(G)\to \ZZ G\!\brk \cY\xrightarrow{d_2} \ZZ G\!\brk\cX \xrightarrow{d_1} \ZZ G\xrarrow\epsilon \ZZ\to 0
\]
where $d_1\sum a_i\brk{x_i}=\sum a_i(x_i-1)$, $\epsilon:\ZZ G\to \ZZ$ is the augmentation map and $d_2$ is as above.
\end{cor}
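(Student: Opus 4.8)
The plan is to separate the five exactness assertions into the two on the left, which are essentially a restatement of Theorem \ref{thm: PG is kernel of d2}, and the three on the right, which form the standard partial free resolution of the trivial module $\ZZ$ over $\ZZ G$ attached to the presentation $G=\brk{\cX\mid\cY}$.

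First I would observe that $\psi$ is a homomorphism of $\ZZ G$-modules: disjoint union of pictures adds $\psi$-values, and the $\ZZ G$-action by enclosing circles multiplies all group labels, hence multiplies $\psi$ by the corresponding element of $\ZZ G$. By Theorem \ref{thm: PG is kernel of d2}, $\psi$ is constant on deformation classes and separates distinct classes, so it induces an injection $P(G)\into \ZZ G\brk{\cY}$; this is exactness at $P(G)$. The same theorem identifies the image of this injection with $\ker d_2$, which is exactness at $\ZZ G\brk{\cY}$. Thus the left half of the sequence is a direct translation of the theorem, including the $\ZZ G$-module structure.

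For the right-hand tail I would identify $d_2,d_1,\epsilon$ with the equivariant cellular boundary maps of the universal cover $\widetilde{X}^2$ of the presentation complex $X^2$, with $G=\pi_1 X^2$ acting by deck transformations: here $\ZZ G\brk{\cX}=C_1(\widetilde{X}^2)$ with $d_1\brk{x}=x-1$ the lift of $\partial e^1(x)$, $\ZZ G=C_0(\widetilde{X}^2)$, and $\epsilon$ the induced augmentation. That this is a complex is immediate, since $\epsilon(x-1)=0$ and $d_1 d_2=0$ follows from the Fox-derivative formula of Theorem \ref{thm: PG is kernel of d2}. Exactness at $\ZZ$ is surjectivity of $\epsilon$. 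Exactness at $\ZZ G$ is the statement $H_0(\widetilde{X}^2)=\ZZ$: the augmentation ideal $\ker\epsilon$ is generated over $\ZZ G$ by the elements $g-1$, and since $\cX$ generates $G$ each such element is a $\ZZ G$-combination of the $x-1=d_1\brk{x}$, so $\ker\epsilon=\im d_1$.

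The main obstacle is exactness at $\ZZ G\brk{\cX}$, that is $\ker d_1=\im d_2$, the one point not supplied by Theorem \ref{thm: PG is kernel of d2}; it is the vanishing $H_1(\widetilde{X}^2)=0$. The clean route is topological: $\widetilde{X}^2$ is simply connected as the universal cover of a space with fundamental group $G$, so $H_1=0$, and the Hurewicz isomorphism gives $H_2(\widetilde{X}^2)=\pi_2 X^2$, re-confirming $\ker d_2\cong P(G)$. An equivalent algebraic route stays inside the combinatorial framework of this section: a $1$-chain in $\ker d_1$ lifts to a word $w\in F_\cX$ with image $1\in G$, hence $w\in R_\cY$; writing $w$ as a product of conjugates of relations in $\cY\cup\cY^{-1}$ then exhibits the chain as $d_2$ applied to an explicit element of $\ZZ G\brk{\cY}$, with the identity $d_1 d_2=0$ guaranteeing that the two computations are compatible. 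I expect the Hurewicz argument to be the most economical, the only care being to match orientation and basepoint conventions so that the Fox-derivative map $d_2$ of Theorem \ref{thm: PG is kernel of d2} is literally the cellular boundary of $\widetilde{X}^2$.
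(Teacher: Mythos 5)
Your proposal is correct and follows essentially the same route as the paper: the two left-hand exactness statements are read off from Theorem \ref{thm: PG is kernel of d2} (injectivity of $\psi$ on deformation classes with image $\ker d_2$), and the right-hand tail is the standard partial free resolution of $\ZZ$ coming from the cellular chain complex of the universal cover $\widetilde{X^2}$, whose simple connectivity gives $H_1=0$ and $H_0=\ZZ$ — exactly the identification the paper makes in the remark immediately following the corollary. Your extra care with the $\ZZ G$-linearity of $\psi$, the augmentation-ideal argument, and the alternative algebraic verification of $\ker d_1=\im d_2$ via $R_\cY$ are fine but add nothing beyond the paper's (implicit) argument.
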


\begin{rem}
The chain complex $\ZZ G\!\brk \cY\xrightarrow{d_2} \ZZ G\!\brk\cX \xrightarrow{d_1} \ZZ G$ is the cellular chain complex of the universal covering $\widetilde{X^2}$ of the 2-dimensional CW complex $X^2$ constructed above. Since $\widetilde{X^2}$ is simply connected, we have 
\[
P(G)=H_2(\widetilde{X^2})=\pi_2(\widetilde{X^2})=\pi_2(X^2).
\]
Therefore, $P(G)=\pi_2(X^2)$ as claimed at the beginning of this subsection.
\end{rem}

We also use ``partial pictures''. These are given by cutting a picture in half using a straight line transverse to the picture.

\begin{defn} Let $w$ be a word in $\cX\cup \cX^{-1}$ given by a finite subset $W$ of the $x$-axis in $\RR^2$ together with a mapping $W\to \cX\cup\cX^{-1}$.
A \und{partial picture} with boundary $\partial L=w$ is defined to be a closed subset $L$ of the upper half plane so that the intersection of $L$ with the $x$-axis is equal to $W$ together with labels on $L$ so that the union of $L$ and its mirror image $L_-$ in the lower half plane is a picture for $G$ and so that the labels on the edges which cross the $x$-axis agree with the given mapping $W\to \cX\cup\cX^{-1}$. We call $L\cup L_-$ the \und{double} of $L$.
\end{defn}

\begin{figure}[htbp]
\begin{center}
\begin{tikzpicture}%[scale=3]
%\draw[help lines=1,thick] (-5,-5) grid (5,4);
%\foreach \x in {-8,-6,...,8}\draw (\x,0) node{\x};\foreach \y in {-6,-4,...,8}\draw (0,\y) node{\y};
%
\begin{scope}
\coordinate (A0) at (-2.5,.2); % begin first L
\coordinate (A1) at (-2.5,0);
\coordinate (Aa) at (-2.5,2);
\coordinate (Ab) at (-1.5,2.2);
\coordinate (A2) at (-1,2.2);
\coordinate (A2a) at (-.5,2.2);
\coordinate (A2b) at (0,2);
\coordinate (A3) at (0,0);
\coordinate (B0) at (-3,0.2); % first L
\coordinate (B1) at (-3,0);
\coordinate (Ba) at (-3,2.4);
\coordinate (Bb) at (-1.5,2.7);
\coordinate (B2) at (-1,2.7);
\coordinate (B2a) at (0,2.7);
\coordinate (B2b) at (0.5,2);
\coordinate (B3) at (0.5,0);
\coordinate (C) at (-1.2,1.5); % r1, first L
\coordinate (C1a) at (-1.4,1.2);
\coordinate (C1b) at (-1.6,1);
\coordinate (C1) at (-1.9,0);
\coordinate (C2) at (-1.3,0);
\coordinate (C3a) at (-1,1.3);
\coordinate (C3b) at (-.7,1);
\coordinate (C3) at (-.5,0);
\coordinate (C10) at (-1.9,0.2);
\coordinate (C20) at (-1.3,0.2);
\coordinate (C30) at (-.5,0.2);
\coordinate (C10b) at (-1.9,-0.2);
\coordinate (C20b) at (-1.2,-0.2);
\coordinate (C30b) at (-.5,-0.2);
\coordinate (D0) at (1.3,0.2); % first L
\coordinate (D1) at (1.3,0);
\coordinate (Da) at (1.3,2);
\coordinate (Db) at (1.3,2.2);
\coordinate (D2) at (2.3,2.2);
\coordinate (D2a) at (2.5,2.2);
\coordinate (D2b) at (2.8,2);
\coordinate (D3) at (2.8,0);
\coordinate (E) at (2,1.5); % r2, first L
\coordinate (E1) at (1.7,0);
\coordinate (E2) at (2.3,0);
\coordinate (E10b) at (1.7,-.2);
\coordinate (E20b) at (2.3,-.2);
\coordinate (E15b) at (2,-.2);

\coordinate (L) at (-3,3);

\draw (L) node{$L_1:$};

\coordinate (Cp) at (-1.2,1.9);
\coordinate (Cpp) at (-1.2,4);
\draw[thick,dotted,<-] (Cp)--(Cpp); % dotted r1
\draw (Cpp) node[right]{$\ell_1$};

\coordinate (Ep) at (2,1.9);
\coordinate (Epp) at (2,4);
\draw[thick,dotted,<-] (Ep)--(Epp); % dotted r2
\draw (Epp) node[right]{$\ell_2$};

\draw (A0) node[left]{$b$}; % first L
\draw (B0) node[left]{$a$};
\draw (D0) node[left]{$c$};
\draw[blue] (C10) node[right]{$x$}; % r1, first L
\draw[blue] (C20) node[right]{$y$};
\draw[blue] (C30) node[left]{$z$};
\draw[blue,thick] 
(C20b) node[below]{$r_1$};
\draw[blue,thick,->] (C10b)--(C30b);

\draw[->] (-3.5,0)--(3,0); % first L
\draw[thick] (A1)..controls (Aa) and (Ab) ..(A2)..controls (A2a) and (A2b)..(A3);
\draw[thick] (B1)..controls (Ba) and (Bb) ..(B2)..controls (B2a) and (B2b)..(B3);
\draw[blue] (C) node[above]{$\ast$}
(C) node[right]{$v_1$}; to % r1, first L
\draw[blue,thick] (C)..controls (C1a) and (C1b)..(C1) 
(C)--(C2)
(C)..controls (C3a) and (C3b)..(C3);
\draw[thick] (D1)..controls (Da) and (Db) ..(D2)..controls (D2a) and (D2b)..(D3);
\draw[thick,red] (E1)--(E)--(E2) (E)node[above]{$\ast$}
 (E15b) node[below]{$r_2$}
 (E) node[right]{$v_2$};
\draw[thick,red,->] 
(E10b)--(E20b) ; % r2, first L
\end{scope} % end first L
\draw (3.5,1.2) node{$\then$};
\begin{scope}[xshift=9cm] % begin second L'
\coordinate (A1) at (-2.5,0);
\coordinate (Aa) at (-2.5,2);
\coordinate (Ab) at (-1.5,2.2);
\coordinate (A2) at (-1,2.2);
\coordinate (A2a) at (-.5,2.2);
\coordinate (A2b) at (0,2);
\coordinate (A3) at (0,0);
\coordinate (B1) at (-3,0); % second L'
\coordinate (Ba) at (-3,2.4);
\coordinate (Bb) at (-1.5,2.7);
\coordinate (B2) at (-1,2.7);
\coordinate (B2a) at (0,2.7);
\coordinate (B2b) at (0.5,2);
\coordinate (B3) at (0.5,0);
\coordinate (C) at (-1.2,1.5); % r1, second L'
\coordinate (Cp) at (-1.2,1.9);
\coordinate (Cpp) at (-1.2,4.4);
\draw[thick,dotted,<-] (Cp)--(Cpp); % dotted r1, L'
\draw (Cpp) node[right]{$\ell_1'$};

\coordinate (C1a) at (-1.4,1.2); % r1, second L'
\coordinate (C1b) at (-1.6,1);
\coordinate (C1) at (-1.9,0);
\coordinate (C2) at (-1.3,0);
\coordinate (C3a) at (-1,1.3);
\coordinate (C3b) at (-.7,1);
\coordinate (C3) at (-.5,0);
\coordinate (C10b) at (-1.9,-0.2);
\coordinate (C20b) at (-1.2,-0.2);
\coordinate (C30b) at (-.5,-0.2);

\coordinate (D0) at (1.3,0.2); % second L'
\coordinate (D1) at (1.3,0);
\coordinate (Da) at (1.3,2);
\coordinate (Db) at (1.3,3);
\coordinate (D2) at (-1,3);
\coordinate (D2a) at (-3,3);
\coordinate (D2b) at (-3.5,2.5);
\coordinate (D2c) at (-3.8,0);
\coordinate (D2c0) at (-3.8,0.2);

\coordinate (Dx0) at (-4.3,0.2); % second L'
\coordinate (Dx) at (-4.3,0);
\coordinate (Dxa) at (-4,3);
\coordinate (Dxb) at (-3,4);
\coordinate (Dxc) at (-1,4);
%\coordinate (Dx2) at (2,3);
\coordinate (Dx2a) at (2.3,4);
\coordinate (Dx2b) at (2.8,3);
\coordinate (D3) at (2.8,0);
\coordinate (D30) at (2.8,0.2);

\coordinate (L) at (-4.3,3.3);

\draw (L) node{$L_2:$}; % second L'

%\draw[blue,thick] (C10b)--(C30b)
%(C20b) node[below]{$r_1$};
\draw[blue,thick] 
(C20b) node[below]{$r_1$};
\draw[blue,thick,->] (C10b)--(C30b);

\draw (D2c0) node[right]{$c$}; % second L'
\draw (D30) node[right]{$c$};
\draw (Dx0) node[left]{$c$};
\draw (D0) node[left]{$c$};
\coordinate (E) at (-2.3,3.3); % r2, second L'
\coordinate (E1a) at (1.3,3.3);
\coordinate (E1b) at (1.7,3);
\coordinate (E1) at (1.7,0);

\coordinate (Ep) at (-2.5,3.45);
\coordinate (Epp) at (-2.5,4.4);
\draw[thick,dotted,<-] (Ep)--(Epp); % dotted r2, L'

\coordinate (E2a) at (1.3,4);
\coordinate (E2b) at (2.3,3);
\coordinate (E2) at (2.3,0);

\coordinate (E10b) at (1.7,-.2);
\coordinate (E20b) at (2.3,-.2);
\coordinate (E15b) at (2,-.2);

\draw[thick,red] (E15b) node[below]{$r_2$};
\draw[thick,red,->] %(E1)--(E)--(E2) (E)node[above]{$\ast$}
(E10b)--(E20b) ; % second L'

\draw[->] (-4.5,0)--(3,0);
\draw[thick] (A1)..controls (Aa) and (Ab) ..(A2)..controls (A2a) and (A2b)..(A3);
\draw[thick] (B1)..controls (Ba) and (Bb) ..(B2)..controls (B2a) and (B2b)..(B3);
\draw[blue] (C) node[above]{$\ast$}; % second L'
\draw[thick,blue] (C)..controls (C1a) and (C1b)..(C1) 
(C)--(C2)
(C)..controls (C3a) and (C3b)..(C3); % r1, second L'
\draw[thick] (D1)..controls (Da) and (Db) ..(D2) ..controls (D2a) and (D2b)..(D2c) 
(Dxc)..controls (Dx2a) and (Dx2b)..(D3) 
(Dx)..controls (Dxa) and (Dxb)..(Dxc);
\draw[thick,red] 
(E)..controls (E1a) and (E1b)..(E1)
(E)..controls (E2a) and (E2b)..(E2)
(Ep)node[below]{$\ast$}; % r2, second L'
\end{scope} % end second L'
%
%\coordinate (A) at (0,0);
%\draw[thick,color=blue] (A) ellipse [x radius=2.8cm,y radius=2.1cm];
\end{tikzpicture}
\caption{On the left, $L_1$ is a partial picture with $\partial L_1=abr_1b^{-1}a^{-1}cr_2c^{-1}$ where $\color{blue}r_1=x^{-1}y^{-1}z$ and $\color{red}r_2$ are relations (or inverse relations). $L_1$ is the ``standard partial picture'' for $q(L_1)=(ab,r_1)(c,r_2)\in Q(G)$. On the right is $L_2$, a deformation of $L_1$ with $\partial L_2=cc^{-1}\partial L_1$. $q(L_2)=(c,r_2)(cr_2^{-1}c^{-1}ab,r_1)$ since the vertex for $r_2$ is on the left and $cr_2^{-1}c^{-1}ab$ is given by reading the labels on the dotted path $\ell_1'$. Then $q(L_1)=q(L_2)$ by \eqref{eq: relation for Q(G)}.}
\label{Fig: partial picture example}
\end{center}
\end{figure}

A \und{deformation} of a partial picture $L$ is defined to be any deformation of its double in which vertices do not cross the $x$-axis and which are transverse to the $x$-axis at the beginning and end of the deformation. (See Figure \ref{Fig: partial picture example} for an example where the deformation pushes the $c$ curve through the $x$-axis producing the cancelling pair $cc^{-1}$ in the word for $\partial L_2$.) It is clear that deformation of partial pictures preserves its boundary $\partial L=w$ as an element of the free group $F_\cX$ and that $w$ lies in the relation group $R_\cY\subseteq F_\cX$. The main theorem about partial pictures is the following.

\begin{thm}\label{thm: Peiffer presentation}
The set of deformation classes of partial pictures forms a (nonabelian) group $Q(G)$ given by generators and relations as follows.
\begin{enumerate}
\item The generators of $Q(G)$ are pairs $(f,r)$ where $f\in F_\cX$ and $r\in\cY$.
\item The relations in $Q(G)$ are given by
\[
	(f,r)(f',r')(f,r)^{-1}=(frf^{-1}f',r')
\]
\end{enumerate}
\end{thm}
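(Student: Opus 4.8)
The plan is to recognize the given presentation as that of a free crossed module and to build a comparison map. Write $\widetilde Q$ for the group abstractly presented by the generators $(f,r)$, $f\in F_\cX$, $r\in\cY$, and the stated relations. The relation
\[
	(f,r)(f',r')(f,r)^{-1}=(frf^{-1}f',r')
\]
is exactly the Peiffer identity for the boundary map $\partial(f,r)=frf^{-1}$ together with the $F_\cX$-action $g\cdot(f,r)=(gf,r)$, so $\widetilde Q$ is the underlying group of the free crossed $F_\cX$-module on $\cY$. I would then construct a homomorphism $\Phi\colon\widetilde Q\to Q(G)$ into the set of deformation classes of partial pictures and prove it is an isomorphism, organizing the argument around the boundary map.

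First I would pin down the group structure on $Q(G)$. Concatenation of partial pictures along the $x$-axis (placing $L$ to the left of $L'$) is well defined on deformation classes, associative, has the empty partial picture as identity, and has as inverse the reflection of $L$ across a vertical line (reflection reverses the reading order and flips each relation label $r\leftrightarrow r^{-1}$, so that $L$ followed by its reflection cancels by the vertex-cancellation move). Reading boundary words along the $x$-axis defines a homomorphism $\partial\colon Q(G)\to R_\cY\subseteq F_\cX$, $L\mapsto\partial L$. It is surjective because the elementary partial picture with a single vertex labelled $r$ and left-hand tail spelling $f$ has boundary $frf^{-1}$, and such conjugates generate the normal subgroup $R_\cY$. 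Moreover $\ker\partial=P(G)$: if $\partial L$ is freely trivial then, as the boundary word reduces, its strands can be isotoped off the $x$-axis in cancelling pairs, turning $L$ into a closed picture, while conversely every closed picture is a partial picture with empty boundary. Thus $Q(G)$ sits in a short exact sequence $1\to P(G)\to Q(G)\xrightarrow{\partial}R_\cY\to1$.

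Next I would define $\Phi$ on generators by sending $(f,r)$ to the class of that elementary partial picture, and verify the Peiffer relation geometrically. In the product $[(f,r)][(f',r')][(f,r)]^{-1}$ the two extreme vertices carry inverse labels $r,r^{-1}$; dragging the middle vertex (labelled $r'$) so that it passes between them and then cancelling the pair leaves a single vertex labelled $r'$ whose tail has been dragged across the boundary strands of $[(f,r)]$, which spell $frf^{-1}$. Its new tail is therefore $frf^{-1}f'$, yielding $[(frf^{-1}f',r')]$; since this is a composite of isotopy and vertex-cancellation moves, $\Phi$ is a well-defined homomorphism. Surjectivity follows by \emph{combing}: I would deform an arbitrary partial picture into a standard form in which its vertices sit in a left-to-right row, each joined to a common left-hand basepoint by a disjoint tail path reading some $f_i$, which exhibits the class as the product $[(f_1,r_1)]\cdots[(f_k,r_k)]$.

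Injectivity is the main obstacle, and I would settle it by comparing the two exact sequences. By Whitehead's description of the free crossed module on $\cY$, the presented group $\widetilde Q$ sits in $1\to\ker d_2\to\widetilde Q\xrightarrow{\partial}R_\cY\to1$, where the kernel is the module of identities among relations, which Theorem \ref{thm: PG is kernel of d2} identifies with $P(G)=\ker d_2$ via the invariant $\psi$. The map $\Phi$ is compatible with the two boundary maps (inducing the identity on $R_\cY$) and, since $\psi$ is natural with respect to the combing decomposition, restricts to the identity on the kernels $P(G)=\ker d_2$; the five lemma then forces $\Phi$ to be an isomorphism. The delicate points I expect to require the most care are the identification $\ker\partial=P(G)$ and the claim that a \emph{generic} (transverse) deformation of a partial picture decomposes into finitely many elementary singular events — transpositions of the heights of adjacent vertices (producing the Peiffer relation or plain commutation) and the creation or cancellation of a vertex pair or of a cap/cup (producing free reduction) — since this is precisely what guarantees that no relations beyond the Peiffer identities are needed.
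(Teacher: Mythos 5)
A preliminary remark: the paper does not prove this statement at all --- it is quoted from the unpublished preprint \cite{I:pictures} --- so your proposal cannot be measured against an internal argument, only on its own terms. On those terms your architecture is the natural one and much of it is sound: the stated relations are exactly the Peiffer relations presenting the free crossed $F_\cX$-module on $\cY$, concatenation gives a well-defined associative product on deformation classes with the empty partial picture as identity, the elementary picture $(f,r)$ has boundary $frf^{-1}$, the Peiffer relation is realized geometrically essentially as you describe, and a careful version of combing (cancelling each vertex against an appended elementary picture after matching tails by concordance moves) does give surjectivity of $\Phi$.

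The genuine gap is the left-hand exactness that your five-lemma step silently requires: injectivity of $P(G)\to Q(G)$, i.e.\ that two closed pictures whose doubles are joined by a deformation transverse to the $x$-axis are already deformation equivalent as closed pictures. Your discussion of ``$\ker\partial=P(G)$'' establishes only that every class with trivial boundary is represented by a closed picture (surjectivity of $P(G)$ onto $\ker\partial$); it never addresses injectivity, and without it the five lemma yields nothing. Note that the obvious attack fails: one cannot certify this injectivity by tracking $\psi$ through the deformation of doubles, because $\psi$ of any double vanishes identically (the mirror half contributes $-\psi(L)$, since reflection inverts vertex labels while preserving the group labels $g(v_i)$); the invariant one actually needs is ``$\psi$ of the upper half,'' and its invariance under the allowed moves is precisely what must be proved. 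Proving it requires the general-position decomposition of an arbitrary transverse deformation into elementary, symmetrically executed events --- the very point you defer as ``delicate.'' So your two flagged points are not technical residue to be cleaned up after the five lemma; together with inverses (your one-line claim that $[L][\bar L]=1$ by vertex cancellation also needs the full cap-and-cancel induction, not a single move), they \emph{are} the theorem, and the crossed-module/five-lemma packaging relocates rather than reduces that work. A final caution: invoking Theorem \ref{thm: PG is kernel of d2} is legitimate inside this paper, where it is quoted as known, but in the source literature that result is obtained by the same analysis as the present statement; if your argument is intended to replace the citation \cite{I:pictures}, you must check that the proof of Theorem \ref{thm: PG is kernel of d2} you rely on does not itself pass through the crossed-module description of $Q(G)$.
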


Note that there is a well defined group homomorphism
\[
	\varphi:Q(G)\to F_\cX
\]
given by $\varphi(f,r)=frf^{-1}$. Then relation (2) can be written as
\begin{equation}\label{eq: relation for Q(G)}
	(f,r)(f',r')=(\varphi(f,r)f',r')(f,r).
\end{equation}
The image of $\varphi$ is $R_\cY$, the normal subgroup generated by all $r\in\cY$. We use the notation 
\begin{equation}\label{eq: Peiffer transformation 2}
	(f,r^{-1}):= (f,r)^{-1}.
\end{equation}
This is compatible with the relations in $Q(G)$ and with the homomorphism $\varphi$ since 
\[
	(f,r^{-1})(f',r')(f,r)=(fr^{-1}f^{-1}f',r')
\]
and $\varphi(f,r^{-1})=fr^{-1}f^{-1}=\varphi(f,r)^{-1}.$ We assume that the relations are irredundant. So, $\cY$, $\cY^{-1}$ will be disjoint.

The generators and relations for $Q(G)$ first appeared in a paper by Ren\'ee Peiffer \cite{Peiffer}. For this reason, the relations \eqref{eq: relation for Q(G)} and \eqref{eq: Peiffer transformation 2}, or rather the equivalent equation $(f,r)(f,r^{-1})=1$ are called \emph{Peiffer transformations of the first and second kind}, respectively \cite{LS}. 

\begin{proof} Given a partial picture $L$ for $G=\left<\cX|\cY\right>$, e.g. $L_1$ in Figure \ref{Fig: partial picture example}, the corresponding element $q(L_1)\in Q(G)$ is given as follows. 

First, by a small deformation of the partial picture, we may assume that the $x$-coordinates of the vertices of $L$ are all distinct. Label the vertices $v_1,\cdots,v_n$ from left to right (in order of increasing $x$-coordinates). From the basepoint direction of vertex $v_i$, draw a lines $\ell_i$ straight up. By a small deformation we can make the green lines transverse to $L$. (These are dotted arrow $\ell_1,\ell_2$ in Figure \ref{Fig: partial picture example}.) At each vertex $v_i$ left $r_i\in \cY\coprod \cY^{-1}$ be the relation at $v_i$ and let $f_i\in F$ be given by reading the labels of the edges in $L$ traversed by $\ell_i$ oriented towards $v_i$. The resulting element of $Q(G)$ is
\[
	q(L)=(f_1,r_1)\cdots(f_n,v_n).
\]
In Figure \ref{Fig: partial picture example}, on the left, we have $r_1=x^{-1}y^{-1}z$ ($r_2$ is not given) $f_1=ab$, $f_2=c$. This gives
\[
	q(L_1)=(ab, x^{-1}y^{-1}z)(c,r_2)
\]
On the right the vertices are in the reverse order. So, $(c,r_2)$ comes first. The new line $\ell_1'$ traverses six edges of $L_2$ giving $(cr_2^{-1}c^{-1}ab,r_1)$. So, the element of $Q(G)$ associated to $L_2$ is
\[
	q(L_2)=(c,r_2)(cr_2^{-1}c^{-1}ab,r_1).
\]
By \eqref{eq: relation for Q(G)} we see that $q(L_1)=q(L_2)\in Q(G)$.\vs2

\noi\ul{Claim 1}: $q(L)\in Q(G)$ is invariant under deformations of $L$ and therefore well-defined.\vs2

\noi\emph{Proof:} First, consider deformations which keep the vertices $v_1,\cdots,v_n$ in the same order. Then the lines $\ell_1,\cdots,\ell_n$ will cross edges whose labels give the same elements $f_1,\cdots,f_n\in F$ since the only changes will be to add or delete cancelling pairs of edges labeled $x,x^{-1}$. So, $q(L)$ remains the same.

Next, consider deformations in which the order of the vertices changes. This happens when, at some point in the deformation, one vertex, say $v_i$, passes above  the next, $v_{i+1}$ (or the previous one $v_{i-1}$ as in Figure \ref{Fig: partial picture example}). In that case, the line $\ell_i$ will cross the same edges as before, but the line $\ell_{i+1}$ will cross edges $f_{i+1}$ before and $\varphi(f_i,r_i)f_{i+1}$ after the deformation changing $(f_i,r_i)(f_{i+1},r_{i+1})$ to $(\varphi(f_i,r_i)f_{i+1},r_{i+1})(f_i,r_i)$ which is \eqref{eq: relation for Q(G)}. Thus $q(L)\in Q(G)$ is unchanged.

Finally, consider a deformation in which two vertices are cancelled. In that case, they must be consecutive, say $v_i,v_{i+1}$, the relations $r_i,r_{i+1}$ must be inverse to each other and the paths $\ell_i,\ell_{i+1}$ must cross the same edges making $f_i=f_{i+1}$, since otherwise, the vertices are not allowed to cancel. So, $(f_{i+1},r_{i+1})=(f_i,r_i^{-1})$ which cancels $(f_i,r_i)$ in $Q(G)$. So, $q(L)$ is unchanged in all deformations. 

\vs2

Conversely, let $Q=(f_1, r_1) \cdots(f_n,r_n)\in  Q(G)$.
We will construct the ``standard
partial picture'' $L_Q$ satisfying $q(L_Q)=Q$.
An example of a standard picture is $L_1$
in Figure \ref{Fig: partial picture example}.

\begin{enumerate}
	\item Let $w_i$ be the unique reduced word
in the letters $\cX\coprod \cX^{-1}$ represents $f_i$. Each $r_i$ is already given as a (cyclically) reduced
word. Let $w(Q) = w_1r_1w_1^{-1}\cdots w_nr_n w_n^{-1}$.
	\item Along the $x$-axis choose a sequence of points one for each letter in $w(Q)$ and label these points with the letters of $w(Q)$.
	\item Connect the points labeled with the
letters in $r_i$ to a point $v_i$ above
these points with line segments
labeled with the letters of $r_i$.
Place a base point direction $\ast$ above $v_i$.
Then the word given by reading the
edge labels counterclockwise around $v_i$ staring at $\ast$ will be $r_i$.
	\item From the points labeled with
the letters in $w_i$, $w_i^{-1}$ draw
vertical lines going up labeled with
the letters of $w_i$, $w_i^{-1}$. Above vertex $v_i$
connect the lines from $w_i$ to ones from $w_i^{-1}$ with semicircles centered at $v_i$. Since all the loose edges in the upper half-plane have been closed off,
this gives a partial picture $L$.
We denote this $L_Q$ and call it the \emph{standard partial picture} corresponding to $Q$.
\end{enumerate}
\vs2

\noi\ul{Claim 2}: $q(L_Q)=Q$.

\noi\ul{Claim 3}: $L_{q(L)}\simeq L$.
\vs2

These two claims imply that $Q\mapsto L_Q$, $L\mapsto q(L)$ give a 1-1 correspondence between deformation classes of partial pictures and the elements of
$Q(G)$.\vs2

\noi\emph{Proof of Claim 2}. This follows directly
from the construction of $L_Q$. The straight
line going up from each vertex $v_i$ will cross the
picture through semicircular edges labelled
with the letters of $w_i$. The relation at $v_i$ is $r_i$
by construction. So $q(L_Q)=(w_1,r_1)\cdots(w_n,r_n)=Q$.\vs2

\noi\emph{Proof of Claim 3}. Given a partial picture
$L$ with $q(L) = Q$, for example $L_2$
in Figure \ref{Fig: partial picture example},
a deformation of $L$ to the standard picture $L_Q$
is given by ``pushing down'' to the $x$-axis all edges outside a small nbd of the lines $\ell_i$.
Since there are no vertices of $L$ outside these neighborhoods, this deformation is allowed.
The result is a standard partial picture
for $Q=q(L)$. See Figure \ref{Fig: proof of Claim 3}.%%%%%%%

Thus $Q\leftrightarrow L_Q$ is a bijection as claimed.
\end{proof}

\begin{figure}[htbp]
\begin{center}
\begin{tikzpicture}[scale=.7]
%\draw[help lines=1,thick] (-5,-1) grid (16.5,6);
%\foreach \x in {-8,-6,...,8}\draw (\x,0) node{\x};\foreach \y in {-6,-4,...,8}\draw (0,\y) node{\y};
%
%
\clip (-5,-1) rectangle (16.5,6);

\begin{scope} %[xshift=9cm] % begin first L'
\coordinate (A1) at (-2.5,0);
\coordinate (Aa) at (-2.5,2);
\coordinate (Ab) at (-1.5,2.2);
\coordinate (A2) at (-1,2.2);
\coordinate (A2a) at (-.5,2.2);
\coordinate (A2b) at (0,2);
\coordinate (A3) at (0,0);
\coordinate (B1) at (-3,0); % first L'
\coordinate (Ba) at (-3,2.4);
\coordinate (Bb) at (-1.5,2.7);
\coordinate (B2) at (-1,2.7);
\coordinate (B2a) at (0,2.7);
\coordinate (B2b) at (0.5,2);
\coordinate (B3) at (0.5,0);

\coordinate (Cxx) at (-1.6,1.5);
\coordinate (Cxxx) at (-1.6,4.4);
\coordinate (Cyy) at (-.8,1.5);
\coordinate (Cyyy) at (-.8,4.4);

\coordinate (Czz) at (-1.2,1.5);
\draw[thick,dashed] (Czz) circle[radius=4mm];
\draw[fill,white] (Cxx) rectangle (Cyyy);

\coordinate (C) at (-1.2,1.5); % r2, first L'
\coordinate (C-) at (-1.2,1.4); % r2, first L'
\coordinate (Cp) at (-1.2,1.9);
\coordinate (Cpp) at (-1.2,4.4);

\draw[thick,dashed] (Cxx)--(Cxxx);
\draw[thick,dashed] (Cyy)--(Cyyy);

\draw[thick,dotted,<-] (Cp)--(Cpp); % dotted r2, L'
\draw (Cpp) node[above]{$\ell_2$};

\coordinate (C1a) at (-1.4,1.2); % r2, first L'
\coordinate (C1b) at (-1.6,1);
\coordinate (C1) at (-1.9,0);
\coordinate (C2) at (-1.3,0);
\coordinate (C3a) at (-1,1.3);
\coordinate (C3b) at (-.7,1);
\coordinate (C3) at (-.5,0);
\coordinate (C10b) at (-1.9,-0.2);
\coordinate (C20b) at (-1.2,-0.2);
\coordinate (C30b) at (-.5,-0.2);

\coordinate (D0) at (1.3,0.2); % first L'
\coordinate (D1) at (1.3,0);
\coordinate (Da) at (1.3,2);
\coordinate (Db) at (1.3,3);
\coordinate (D2) at (-1,3);
\coordinate (D2a) at (-3,3);
\coordinate (D2b) at (-3.5,1.5);
\coordinate (D2c) at (-3.8,0);
\coordinate (D2c0) at (-3.8,0.2);

\coordinate (Dx0) at (-4.3,0.2); % first L'
\coordinate (Dx) at (-4.3,0);
\coordinate (Dxa) at (-4,3);
\coordinate (Dxb) at (-3,4);
\coordinate (Dxc) at (-1,4);
%\coordinate (Dx2) at (2,3);
\coordinate (Dx2a) at (2.3,4);
\coordinate (Dx2b) at (2.8,3);
\coordinate (D3) at (2.8,0);
\coordinate (D30) at (2.8,0.2);

\coordinate (L) at (-4.3,4);

\draw (L) node{$L_2:$}; % first L'

%\draw[blue,thick] (C10b)--(C30b)
%(C20b) node[below]{$r_1$};
\draw[blue,thick] 
(C20b) node[below]{$r_2$};
\draw[blue,thick,->] (C10b)--(C30b);

\draw (D2c0) node[right]{$c$}; % first L'
\draw (D30) node[right]{$c$};
\draw (Dx0) node[left]{$c$};
\draw (D0) node[left]{$c$};
\coordinate (E) at (-2.3,3.3); % r1, first L'
\coordinate (E1a) at (1.3,3.3);
\coordinate (E1b) at (1.7,3);
\coordinate (E1) at (1.7,0);

\coordinate (Ep) at (-2.5,3.6);
\coordinate (Ep-) at (-2.5,3.5);
\coordinate (Epp) at (-2.5,4.4);

\coordinate (Exx) at (-2.9,3.3);
\coordinate (Exxx) at (-2.9,4.4);
\coordinate (Eyy) at (-2.1,3.3);
\coordinate (Eyyy) at (-2.1,4.4);
\coordinate (Ezz) at (-2.5,3.3);
\draw[thick,dashed] (Ezz) circle[radius=4mm]; % dashed r1, L'
\draw[fill,white] (Exx) rectangle (Eyyy);

\draw[thick,dotted,<-] (Ep-)--(Epp); % dotted r1, L'
\draw (Epp) node[above]{$\ell_1$};

\draw[thick,dashed] (Exx)--(Exxx); % dashed r1, L'
\draw[thick,dashed] (Eyy)--(Eyyy); % dashed r1, L'

\coordinate (E2a) at (1.3,4);
\coordinate (E2b) at (2.3,3);
\coordinate (E2) at (2.3,0);

\coordinate (E10b) at (1.7,-.2);
\coordinate (E20b) at (2.3,-.2);
\coordinate (E15b) at (2,-.2);

\draw[thick,red] (E15b) node[below]{$r_1$};
\draw[thick,red,->] 
(E10b)--(E20b) ; % first L'

\draw[->] (-5,0)--(3.5,0); % x-axis
\draw[thick] (A1)..controls (Aa) and (Ab) ..(A2)..controls (A2a) and (A2b)..(A3);
\draw[thick] (B1)..controls (Ba) and (Bb) ..(B2)..controls (B2a) and (B2b)..(B3);
\draw[blue] (C-) node[above]{$\ast$}; % first L' %%%%%%%%%%%%%%
\draw[thick,blue] (C)..controls (C1a) and (C1b)..(C1) 
(C)--(C2)
(C)..controls (C3a) and (C3b)..(C3); % r2, first L'
\draw[thick] (D1)..controls (Da) and (Db) ..(D2) ..controls (D2a) and (D2b)..(D2c) 
(Dxc)..controls (Dx2a) and (Dx2b)..(D3) 
(Dx)..controls (Dxa) and (Dxb)..(Dxc);
\draw[thick,red] 
(E)..controls (E1a) and (E1b)..(E1)
(E)..controls (E2a) and (E2b)..(E2)
(Ep)node[below]{$\ast$}; % r2, first L'
\end{scope} % end first L'
\draw (3.5,1.9) node{$\then$};
\begin{scope}[xshift=13cm] % BEGIN second L', Fig 7

\foreach \x in {12,16,20,32}
\draw[thick] (-1,1.6) circle[radius=\x mm];
\foreach \x in {24,28}
\draw[thick,red] (-1,1.6) circle[radius=\x mm];
\draw[thick] (-5.75,3.1) circle[radius=7.5mm];
\draw[fill,white] (-4.2,1.5) rectangle (2.2,-2)
(-6.5,3)rectangle(-5,2);

\foreach \x in {0.2,.6,1,2.2}
\draw[thick] (\x,0)--(\x,1.5);
\foreach \x in {1.4,1.8}
\draw[thick,red] (\x,0)--(\x,1.5);

\draw (-6.5,.2) node[left]{$c$};

\foreach \x in {-5,-6.5}
\draw[thick] (\x,0)--(\x,3);
\foreach \x in {-3.4,-3.8}
\draw[thick,red] (\x,0)--(\x,1.5);

\coordinate (X) at (-5.75,3);
\coordinate (X1) at (-6,0);
\coordinate (X2) at (-5.5,0);
\coordinate (X1p) at (-6,-.2);
\coordinate (X2p) at (-5.5,-.2);
\coordinate (Xp) at (-5.75,-.2);

\coordinate (Ep) at (-5.75,3.5);
\coordinate (Epp) at (-5.75,4.4);

\draw[thick,red,->] (X1p)--(X2p);
\draw[red] (Xp) node[below]{$r_1$};

\draw[thick,dotted,<-] (Ep)--(Epp); % dotted r1, L', Fig 7
\draw (Epp) node[above]{$\ell_1$};

\foreach \x in {-2.2,-2.6,-3,-4.2}
\draw[thick] (\x,0)--(\x,1.5);
\draw[thick,red] (X1)--(X)--(X2)  (Ep) node[below]{$\ast$};

\coordinate (L) at (-8,4.3);
\draw (L) node{$L_{q(L_2)}:$}; % second L', Fig 7
\coordinate (C10b) at (-1.7,-0.2);
\coordinate (C20b) at (-1,-0.2);
\coordinate (C30b) at (-.3,-0.2);
\draw[blue,thick] 
(C20b) node[below]{$r_2$};
\draw[blue,thick,->] (C10b)--(C30b);
\draw[->] (-7.2,0)--(3,0); % x-axis, second L, Fig 7

\coordinate (C1a) at (-1.2,1.2); % r1, second L', Fig 7
\coordinate (C1b) at (-1.4,1);
\coordinate (C1) at (-1.7,0);
\coordinate (C2) at (-1.1,0);
\coordinate (C3a) at (-.8,1.3);
\coordinate (C3b) at (-.5,1);
\coordinate (C3) at (-.3,0);

\coordinate (C) at (-1,1.5); % r1, second L', Fig 7
\coordinate (Cp) at (-1,1.9);
\coordinate (Cpp) at (-1,5);
\draw[thick,dotted,<-] (Cp)--(Cpp); % dotted r1, L', Fig 7
\draw (Cpp) node[above]{$\ell_2$};
\draw[blue] (C) node[above]{$\ast$}; % second L', Fig 7
\draw[thick,blue] (C)..controls (C1a) and (C1b)..(C1) 
(C)--(C2)
(C)..controls (C3a) and (C3b)..(C3); % r1, second L', Fig 7
\end{scope} % end second L', Fig 7
%
%\coordinate (A) at (0,0);
%\draw[thick,color=blue] (A) ellipse [x radius=2.8cm,y radius=2.1cm];
\end{tikzpicture} %, Fig 7
\caption{Dotted lines $\ell_1,\ell_2$ are given by definition of $q(L_2)$. Take dashed lines parallel to $\ell_1,\ell_2$ and connected with small semicircles below vertices $v_1,v_2$. Push the dashed line down to the $x$-axis.
This gives an admissible deformation of $L_2$ (on the left) to $L_{q(L_2)}$ (on the night). The dotted lines $\ell_1,\ell_2$ cross the same edges in both partial pictures.
}
\label{Fig: proof of Claim 3}
\end{center}
\end{figure}

\subsection{Pictures with good commutator relations}\label{ss3.2: good commutator relations}

If the same letter, say $x$, occurs more than twice in a relation $r$, then, at the vertex $v$, the edge set $E(x)$ cannot be a manifold. (For example, if $G=\left<x\,|\, x^3\right>$ then $E(x)$ will not be a manifold.) However, this does not happen in our case because our relations are ``good''. 

We define a \und{good commutator relation} to be a relation of the form
\[
	r(a,b):=ab(bc_1,\cdots,c_ka)^{-1}
\]
where $a,b,c_1,\cdots,c_k$ are distinct elements of $\cX$ and $k\ge0$. The letters $a,b$ will be called \und{X-letters} and the letters $c_j$ will be called \und{Y-letters} in the relation. In the picture, the two X-letters in any commutator relation form the shape of the letter ``X'' since the lines labeled with these letters go all the way through the vertex. Call theses \und{X-edges} at the vertex. The edges labeled with the Y-letters go only half way and stop at the vertex. Call these \und{Y-edges} at the vertex. (See Figure \ref{fig02}.) In the definition of a picture we can choose the sets $E_r\subset S^1$ so that the points labeled $a,a^{-1}$ (and $b,b^{-1}$) are negatives of each other. Then the edge sets $E(a),E(b)$ will be manifolds. (Since $a,b,c_j$ are all distinct there are no other coincidences of labels at the vertices.)

\begin{figure}[htbp]
\begin{center}
{
\setlength{\unitlength}{1cm}
{\mbox{
\begin{picture}(10,2)
      \thicklines
%    \thinlines
\put(-1,1.6){Example:}
\put(-1.7,1){$r=r({a,b})=ab(bc_1c_2c_3a)^{-1}$}
\put(3,0){
\put(1,0){
      \qbezier(0,0)(1,1.2)(2.2,2)
      \put(1.8,2){$b$} % negative side
      \qbezier(-.2,2)(1,1.2)(2,0)
      \put(2,0.1){$a$} % negative side right
      }
 \put(1,2){$a$} % negative side left
 \put(.9,0.2){$b$}  % negative side
    \put(1.9,.97){$\bullet$}
    \put(2,1.07){
    \qbezier(0,0)(1,-.2)(2,-.9)\put(1.8,-.7){$c_3$}
    \qbezier(0,0)(1,0.2)(2.4,-.2) \put(2.2,0){$c_2$}
    \qbezier(0,0)(1,.4)(2.4,.5)\put(2.2,.7){$c_1$}
    }
    \put(1.9,1.6){$\ast$}
}
\end{picture}}
}}
\caption{The X-letters $a,b$ have edge sets which are smooth at the vertex. The basepoint direction is on the negative side of both X-edges $E(a),E(b)$.}
\label{fig02}
\end{center}
\end{figure}

We have the following trivial observation.

\begin{prop} Suppose that $G=\left<\cX\,|\,\cY\right>$ is a group having only good commutator relations. Then, given any label $x$, the edge set $E(x)$ in $L$ is a disjoint union of smooth simple closed curves and smooth paths. At both endpoints of each path, $x$ occurs as a Y-letter. It occurs as $x$ at one end and $x^{-1}$ at the other.
\end{prop}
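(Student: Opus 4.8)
The plan is to reduce the entire statement to a local analysis at each vertex of $L$ and then invoke the classification of $1$-manifolds. Conditions (a) and (b) in Definition \ref{def: planar picture} already do most of the work: they say that $E(x)$ is a smoothly embedded $1$-manifold away from the vertices, and that near a vertex $v$ with relation $r$ the set $E(x)$ agrees with the cone of $\lambda^{-1}(\{x,x^{-1}\})\subseteq E_r$. So everything is extracted from what this cone can look like, vertex by vertex, together with the fact that \emph{all} relations are good commutator relations.

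First I would run through the three ways $x$ can appear in a good commutator relation $r=r(a,b)=ab(bc_1\cdots c_k a)^{-1}$ at a vertex $v$. Since $a,b,c_1,\dots,c_k$ are distinct, $x$ is either an X-letter ($x\in\{a,b\}$), a Y-letter ($x=c_j$ for a unique $j$), or absent. In the X-letter case the chosen $E_r$ places the points labelled $x,x^{-1}$ antipodally, so the cone of $\lambda^{-1}(\{x,x^{-1}\})$ is a full diameter and $E(x)$ passes \emph{smoothly through} $v$ with no boundary there. In the Y-letter case $\lambda^{-1}(\{x,x^{-1}\})$ is a single point, whose cone is one radius terminating at $v$, so $v$ is an endpoint of $E(x)$. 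If $x$ is absent then $E(x)$ misses $v$. Combined with (a), this shows that $E(x)$ is a $1$-manifold whose boundary points are exactly the vertices at which $x$ occurs as a Y-letter, and the classification of $1$-manifolds then gives that each component is either a smooth simple closed curve (no boundary points) or a smooth arc with two endpoints, each a Y-letter vertex. I would also observe that, as the $c_j$ are distinct, $x$ occurs as a Y-letter at most once at any vertex, so the two ends of an arc are genuine separate terminations.

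The only nontrivial point, and where I expect the real work to lie, is the \emph{sign} assertion: along a single arc $P$ of $E(x)$ the letter $x$ appears as $x$ at one end and as $x^{-1}$ at the other. This is the one claim not visible from the local cone alone, since it compares the two ends. The plan is to orient $P$ from $v_0$ to $v_1$, write $t$ for the forward tangent and $n$ for the (globally consistent) normal orientation carried by the edge, and recall that the sign with which $x$ is read off at an endpoint equals the sign of the crossing of $P$ as one travels counterclockwise around that vertex. A short coordinate computation shows this crossing sign is $+$ precisely when $n$ is the counterclockwise quarter-turn of the \emph{outward} radial direction of the terminating edge. At $v_0$ that outward radial direction is $+t$, whereas at $v_1$ it is $-t$; since $t$ and $n$ both vary continuously along $P$, the handedness of the frame $(t,n)$ is constant, so the two sign conditions at $v_0$ and $v_1$ are mutually exclusive and exactly one endpoint is positive. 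Hence the signs are opposite, as asserted. The main obstacle is therefore not any hard argument but the careful bookkeeping of orientation conventions (the basepoint direction, the counterclockwise reading of $r$, and the normal orientation governing $g(U)$) needed to identify ``the sign in the relation'' with ``the crossing sign determined by $n$''; once that dictionary is fixed, the continuity/handedness argument closes the proof.
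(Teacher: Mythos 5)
Your proposal is correct, but there is essentially nothing in the paper to compare it against: the paper states this Proposition as a ``trivial observation'' and gives no proof at all, relying implicitly on the preceding paragraph, where the points labelled $a,a^{-1}$ in $E_r$ are chosen antipodally so that X-edges pass smoothly through each vertex while Y-edges terminate there. Your first part makes exactly that local analysis precise and correctly reduces the circle-or-arc dichotomy to the classification of compact $1$-manifolds with boundary; your observation that a letter can occur as a Y-letter at most once per vertex (the $c_j$ being distinct) is also the right reason the two ends of an arc are distinct terminations. Your second part, the sign claim, is sound as well, and it is the part the paper leaves entirely implicit: with the convention that the counterclockwise planar reading around a vertex yields its relation label (the same convention that makes the region labels $g(U)$ well defined), a terminating edge contributes $x$ exactly when its normal orientation is the counterclockwise quarter-turn of the outward radial direction, and the constancy of the handedness of the frame $(t,n)$ along the arc then forces opposite signs at the two ends. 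The one point you assert only parenthetically but should verify is that $n$ is genuinely continuous along the whole arc $P$, i.e.\ across the interior X-vertices that $P$ may pass through: $P$ is in general a union of several graph edges, each carrying its own normal orientation, so their consistency at a crossing needs a reason. It holds here by the very dictionary you set up: in a good commutator relation the two occurrences of an X-letter have opposite signs and sit at antipodal points of $E_r$, which translates precisely into the statement that the normals of the two abutting edges agree as plane vectors. (Had an X-letter occurred twice with the same sign at antipodal points, the normal would flip at such a vertex and the conclusion of the Proposition would actually fail, so this ingredient is genuinely needed and not mere bookkeeping.) With that verification added, your proof is complete.
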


\begin{cor}\label{cor: pictures have same number of x, x-inv}
Suppose that $G$ has only good commutator relations. Then, for any picture $L$ for $G$ and any label $x$, the number of vertices of $L$ having $x$ as Y-letter is equal to the number of vertices of $L$ having $x^{-1}$ as Y-letter.
\end{cor}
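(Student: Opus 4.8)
The plan is to obtain this counting statement as an essentially immediate consequence of the structural Proposition just established, which already does all the geometric work by identifying $E(x)$ as a $1$-manifold. Once that local-to-global manifold picture is in hand, the corollary becomes a bookkeeping count of the two kinds of path endpoints, so I would not expect to need any new geometry or any appeal to the homological machinery ($\psi$, Fox derivatives, or the chain complex of $\widetilde{X^2}$).

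Concretely, I would first invoke the preceding Proposition to decompose $E(x)$, for the fixed label $x$, as a disjoint union of smooth simple closed curves together with finitely many smooth compact paths (compactness being guaranteed since the picture $L$ is a finite embedded graph). The closed curves have no endpoints and hence meet no vertex at which $x$ occurs as a Y-letter; they therefore contribute $0$ to both of the counts in question and may be discarded. Every vertex at which $x$ appears as a Y-letter is then an endpoint of exactly one of the remaining paths, and each such path has precisely two endpoints.

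Next I would use the second assertion of the Proposition: along each path the label $x$ occurs as the Y-letter $x$ at one endpoint and as $x^{-1}$ at the other. Writing $p$ for the number of paths in $E(x)$, this produces a bijection between the paths and the vertices carrying $x$ as a Y-letter (sending a path to its ``$x$-end''), and likewise a bijection between the paths and the vertices carrying $x^{-1}$ (sending a path to its ``$x^{-1}$-end''). Both counts therefore equal $p$, which is exactly the claimed equality.

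The only point needing a genuine argument — and the place where I would expect any subtlety to hide — is the assertion that each Y-letter occurrence of $x$ corresponds to a \emph{distinct} vertex, so that the endpoint-to-vertex correspondences above are honestly one-to-one rather than merely counting occurrences. This is forced by the definition of a good commutator relation $r(a,b)=ab(bc_1,\cdots,c_k a)^{-1}$, in which the letters $a,b,c_1,\cdots,c_k$ are pairwise distinct: at any single vertex the label $x$ can occur as a Y-letter at most once and can never be simultaneously an X-letter and a Y-letter. In particular no vertex can serve as two endpoints of $x$-paths, and no path can close up at a single vertex carrying both $x$ and $x^{-1}$. With the distinctness hypothesis pinning down the correspondence, the two bijections are established and the corollary follows.
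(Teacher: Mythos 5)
Your proof is correct and follows essentially the same route as the paper: the paper states the corollary with no separate proof, treating it exactly as you do — as an immediate endpoint count for the paths in $E(x)$ supplied by the preceding Proposition, with each path contributing one vertex where $x$ occurs as the Y-letter $x$ and one where it occurs as $x^{-1}$. Your extra care about distinctness of the letters in a good commutator relation (so that each vertex carries at most one Y-edge labeled $x$, making the endpoint-to-vertex correspondence a genuine bijection) is a detail the paper leaves implicit, but it is the same argument.
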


\subsection{Atoms}\label{ss3.3: atoms}

Let $\cS=(\beta_1,\cdots,\beta_m)$ be an admissible sequence of real Schur roots for a hereditary algebra $\Lambda$. Then $G(\cS)$ has only good commutator relations. We need the Atomic Deformation Theorem which says that every picture in $G(\cS)$ is a linear combination of ``atoms''. In other words atoms generate $P(G)$. The definition comes from \cite{IOTW4} and \cite{IT13} but is based on \cite{IOr} where similar generators of $P(G)$ are constructed for a torsion-free nilpotent group $G$.

Suppose that $\cS$ is admissible and $\alpha_\ast=(\alpha_1,\alpha_2,\alpha_3)$ is a sequence of three hom-orthogonal roots in $\cS$ ordered in such a way that $ext(\alpha_i,\alpha_j)=0$ for $i<j$. Let $\cA(\alpha_\ast)$ be the rank 3 wide subcategory of $mod\text-\Lambda$ with simple objects $\alpha_\ast$. One easy way to describe this category is
\[
	\cA(\alpha_\ast)=(^\perp M_{\alpha_\ast})^\perp
\]
where $M_{\alpha_\ast}=M_{\alpha_1}\oplus M_{\alpha_2}\oplus M_{\alpha_3}$. In other words, $\cA(\alpha_\ast)$ is the full subcategory of $mod\text-\Lambda$ of all modules $X$ having the property that $\Hom(X,Y)=0=\Ext(X,Y)$ for all $Y$ having the property that $\Hom(M_{\alpha_\ast},Y)=0=\Ext(M_{\alpha_\ast},Y)$. The objects of $\cA(\alpha_\ast)$ are modules $M$ having filtrations where the subquotients are $M_{\alpha_i}$. Since $ext(\alpha_i,\alpha_j)=0$ for $i<j$, the modules $M_{\alpha_1}$ occur at the bottom of the filtration and $M_{\alpha_3}$ occurs at the top of the filtration. Let $wide(\alpha_\ast)$ denote the set of all dimension vectors of the objects of $\cA(\alpha_\ast)$. The elements of $wide(\alpha_\ast)$ are all nonnegative integer linear combinations of the roots $\alpha_i$. These are elements of the 3-dimensional vector space $\RR\alpha_\ast$ spanned by the roots $\alpha_\ast$.

 Let $L(\alpha_\ast)\subseteq S^2$ be the semi-invariant picture for the category $\cA(\alpha_\ast)$. We recall (\cite{IOTW4}, \cite{IT13}, \cite{Cat0}) that $L(\alpha_\ast)$ is the intersection with the unit sphere $S^2\subseteq\RR\alpha_\ast\cong \RR^3$ with the union of the 2-dimensional subset $D(\beta)$ of $\RR\alpha_\ast$ where $\beta\in wide(\alpha_\ast)$ given by the stability conditions:
 \[
 	D(\beta):=\left\{x\in\RR\alpha_\ast: \brk{x,\beta}=0, \brk{x,\beta'}\ge 0\text{ for all $\beta'\subset \beta$, $\beta'\in wide(\alpha_\ast)$}\right\}
 \]
When we stereographically project $L(\alpha_\ast)\subset S^2$ into the plane $\RR^2$ we get a planar picture for the group $G(wide(\alpha_\ast))$ according to the definitions in this section.

\begin{defn}
Let $\cS,\alpha_\ast$ be as above. Then the \und{atom} $A_\cS(\alpha_\ast)\subset \RR^2$ is defined to be the picture for $G(\cS)$ given by taking the semi-invariant picture $L(\alpha_\ast)\subset S^2$, stereographically projecting it away from the point $-\sum \undim P_i\in\RR\alpha_\ast$ where $P_i$ are the projective objects of $\cA(\alpha_\ast)$ and deleting all edges having labels $x(\gamma)$ where $\gamma\notin\cS$.
\end{defn}

\begin{center}
\begin{figure}[ht] % begin {fig03}
       \includegraphics[width=4in]{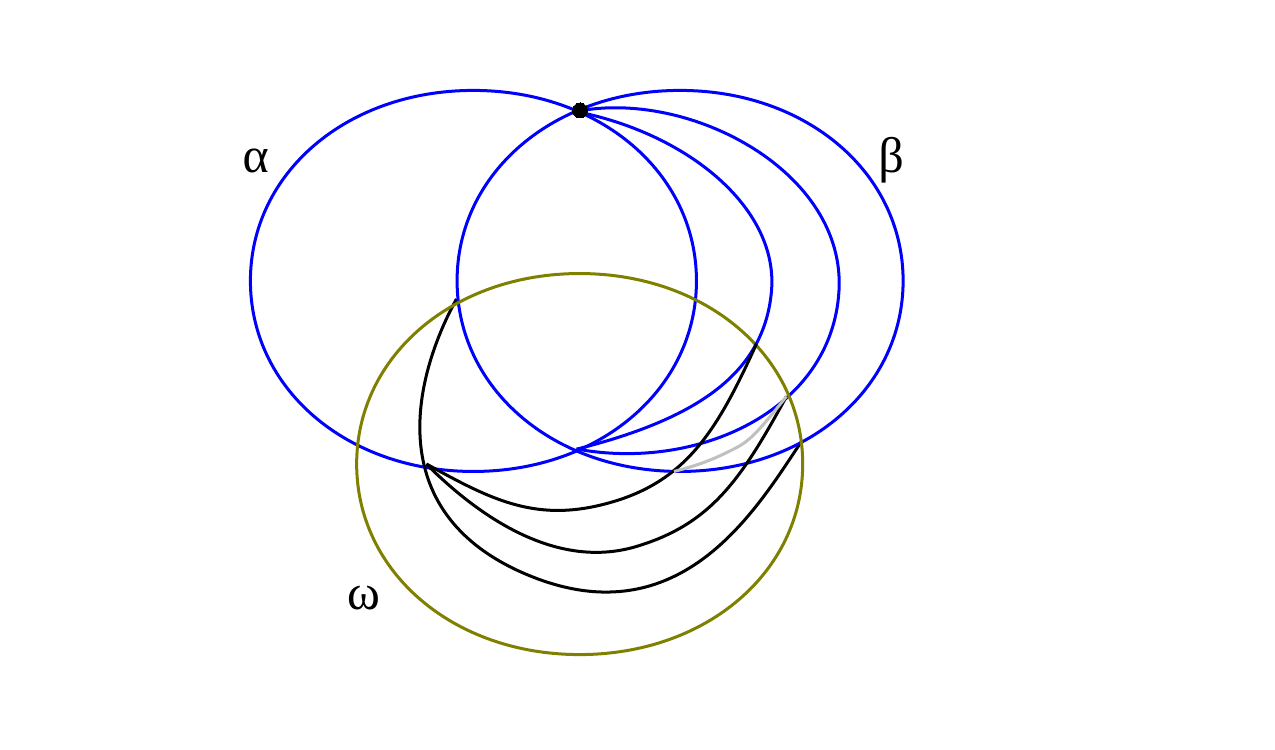}
\caption{The atom $A_\cA(\alpha,\beta,\omega)$. There are three circles labeled $\alpha, \beta, \omega$. There is only one vertex (black dot) outside the brown circle labeled $\omega$. There is only one vertex inside the $\alpha$ circle. The faint gray line is deleted since, in this example, its label is not in the set $\cS$.}
\label{fig03}
\end{figure} % end {fig03}
\end{center}

Figure \ref{fig03} gives an example of an atom. We need to prove that certain aspects of the shape are universal.

\begin{prop}\label{prop: properties of atoms}
Any atom $A_\cS(\alpha_1,\alpha_2,\alpha_3)$ has three circles $E(\alpha_i)=D(\alpha_i)$ with labels $x(\alpha_i)\in G$ and all other edge sets have two endpoints. There is exactly one vertex $v$ outside the $\alpha_3$ circle. This vertex has the relation $r(\alpha_1,\alpha_2)$. Dually, there is exactly one vertex inside the $\alpha_1$ circle with relation $r(\alpha_2,\alpha_3)^{-1}$.
\end{prop}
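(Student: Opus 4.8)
The plan is to read the atom off the semi-invariant picture $L(\alpha_\ast)$ of the rank $3$ wide subcategory $\cA(\alpha_\ast)$, exploiting the ordering $ext(\alpha_i,\alpha_j)=0$ for $i<j$ to control on which side of the $\alpha_3$ and $\alpha_1$ circles each wall can lie. First I would dispose of the shape statements. Each $\alpha_i$ is simple in $\cA(\alpha_\ast)$, hence has no proper subroot in $wide(\alpha_\ast)$; the inequalities defining $D(\alpha_i)$ are then vacuous, so $D(\alpha_i)=H(\alpha_i)$ is a great circle on $S^2$ and projects to a circle labelled $x(\alpha_i)$, which is never deleted since $\alpha_i\in\cS$. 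For a non-simple $\gamma\in wide(\alpha_\ast)$ the wall $D(\gamma)=H(\gamma)\cap\bigcap_{\gamma'\subsetneq\gamma}\{\brk{x,\gamma'}\le0\}$ is a proper convex $2$-cone inside the plane $H(\gamma)$ (at least one subroot $\gamma'$ is present and is not proportional to $\gamma$), so it meets $S^2$ in an arc with exactly two endpoints; deleting the edge sets $E(\delta)$, $\delta\notin\cS$, removes whole edges and leaves each surviving $E(\gamma)$ terminating at its two endpoints. This gives the three circles and the two-endpoint claim.

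Next I would locate the exterior vertex. The key is a sign lemma: since $ext(\alpha_i,\alpha_3)=0=hom(\alpha_i,\alpha_3)$ for $i<3$, no $M_{\alpha_i}$ can sit above $M_{\alpha_3}$ in a filtration, because an extension $0\to M_{\alpha_3}\to E\to M_{\alpha_i}\to 0$ is classified by $\Ext(M_{\alpha_i},M_{\alpha_3})=0$. Thus $M_{\alpha_3}$ is a top composition factor of every object of $\cA(\alpha_\ast)$ containing it, so $\alpha_3$ is a quotient root of every $\gamma$ with positive $\alpha_3$-coefficient, and by the quotient-root description of $D(\gamma)$ such a wall satisfies $\brk{x,\alpha_3}\ge0$, i.e.\ it lies inside or on the $\alpha_3$ circle. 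Hence a vertex $v$ with $\brk{v,\alpha_3}<0$ can meet only walls $D(\gamma)$ with $\gamma\in\cA(\alpha_1,\alpha_2)$, so the wide subcategory $\cW(v)$ of objects $V$ with $v\in D(V)$ (Theorem~\ref{thm: W(x0) is wide}) is a rank $2$ subcategory of the rank $2$ category $\cA(\alpha_1,\alpha_2)$ and therefore equals it, giving $v=D(\alpha_1)\cap D(\alpha_2)$ and uniqueness. Since $\alpha_1,\alpha_2,\alpha_3$ are linearly independent, $H(\alpha_1)\cap H(\alpha_2)$ is not contained in $H(\alpha_3)$; its ray with $\brk{\cdot,\alpha_3}<0$ is precisely the edge of the simplicial cone $\{\brk{\cdot,\alpha_i}\le0\}$ on $H(\alpha_1)\cap H(\alpha_2)$, so this vertex is genuinely exterior. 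The walls through it are the $D(\gamma)$ with $\gamma=a\alpha_1+b\alpha_2$, so its relation is the commutator relation $r(\alpha_1,\alpha_2)$; its basepoint direction, lying on the negative side of both X-edges, points into the all-negative (outer) region, which pins the orientation as $r(\alpha_1,\alpha_2)$ rather than its inverse.

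Finally, the interior vertex is the mirror image of the exterior one under the duality interchanging subroots and quotient roots (equivalently $\Lambda\rightsquigarrow\Lambda\op$), which swaps $\alpha_1\leftrightarrow\alpha_3$ and ``inside $\leftrightarrow$ outside''. The dual sign lemma --- $M_{\alpha_1}$ is always a bottom (sub) composition factor, because $\Ext(M_{\alpha_1},M_{\alpha_j})=ext(\alpha_1,\alpha_j)=0$ for $j>1$ forbids anything below it --- shows every wall with positive $\alpha_1$-coefficient satisfies $\brk{x,\alpha_1}\le0$, so the only walls strictly inside the $\alpha_1$ circle come from $\cA(\alpha_2,\alpha_3)$; the same rank count yields the unique interior vertex $D(\alpha_2)\cap D(\alpha_3)$, sitting at the edge of the all-positive cone $\{\brk{\cdot,\alpha_i}\ge0\}$. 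I expect the orientation to be the one genuinely delicate point: one must explain why this vertex carries $r(\alpha_2,\alpha_3)^{-1}$ and not $r(\alpha_2,\alpha_3)$. I would argue that $L(\alpha_\ast)$ is a cone, hence invariant under the antipodal map of $S^2$, and that this map reverses the normal orientation of every wall $D(\beta)$ (the $\beta$-side flips); consequently reading a vertex from the inner side of the projection inverts the relation obtained from the outer side. Applied to $D(\alpha_2)\cap D(\alpha_3)$, whose basepoint direction now points toward the inner all-positive region, this produces $r(\alpha_2,\alpha_3)^{-1}$, completing the argument.
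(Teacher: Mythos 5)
Your treatment of the circles, the two-endpoint arcs, and the exterior vertex is essentially the paper's own argument: the paper likewise observes that the only objects of $\cA(\alpha_\ast)$ which do not map onto $M_{\alpha_3}$ are the objects of $\cA(\alpha_1,\alpha_2)$, so that by the quotient-root form of the stability condition every other wall satisfies $\brk{x,\alpha_3}\ge 0$ and lies inside or on the $\alpha_3$ circle, leaving outside only the walls of $\cA(\alpha_1,\alpha_2)$ (the terms of $r(\alpha_1,\alpha_2)$), which meet in a single exterior vertex. Your added details --- the rank-2 wide subcategory count for uniqueness and the linear-independence argument for exteriority --- are correct and somewhat more careful than the printed proof, which simply asserts that ``these lines meet at only two vertices.''

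The genuine gap is in your last step. The claim that ``$L(\alpha_\ast)$ is a cone, hence invariant under the antipodal map of $S^2$'' is false: being a cone gives invariance under positive scaling only, and the semi-invariant picture is genuinely \emph{not} antipodally symmetric. For a non-simple $\gamma\in wide(\alpha_\ast)$ the wall $D(\gamma)$ is a proper convex cone in the plane $H(\gamma)$, and $-D(\gamma)$ is cut out by the reversed inequalities $\brk{x,\gamma'}\ge 0$ for subroots $\gamma'\subset\gamma$; by Theorem \ref{thm: equivalent definitions of D(b)} this is the wall of $\gamma$ for the \emph{opposite} algebra, not a wall of $L_\Lambda(\alpha_\ast)$. (Concretely, in type $A_2$ the wall $D(\alpha_1+\alpha_2)$ is a single ray whose antipodal ray does not lie in the picture.) So your derivation of the sign in $r(\alpha_2,\alpha_3)^{-1}$ rests on a false premise. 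The repair is the duality your argument is reaching for, made precise through $\Lambda\op$: the map $x\mapsto -x$ carries $L_\Lambda(\alpha_1,\alpha_2,\alpha_3)$ onto $L_{\Lambda\op}(\alpha_3,\alpha_2,\alpha_1)$, interchanging subroots with quotient roots and hence ``inside the $\alpha_1$ circle'' for $\Lambda$ with ``outside the $\alpha_3$ circle'' for $\Lambda\op$; since the antipodal map of $S^2$ is orientation-reversing, vertex relations are inverted under this identification, and applying your already-proved exterior-vertex statement to $\Lambda\op$ then yields exactly one interior vertex carrying $r(\alpha_2,\alpha_3)^{-1}$. This is what the paper's ``Dually'' silently invokes, and it is the statement you should prove in place of antipodal invariance of $L(\alpha_\ast)$ itself.
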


We use the notation $r(\alpha,\beta)$ for $r(x(\alpha),x(\beta))$
For example, the blue lines in Figure \ref{fig02} meet at two vertices giving the relations 
\[
r(\alpha,\beta)=x(\alpha)x(\beta)\left(x(\beta)x(\gamma_1) x(\gamma_2)x(\alpha)\right)^{-1}
\]
at the top and $r(\alpha,\beta)^{-1}$ in the middle of the brown $x(\omega)$ circle.

\begin{proof}
The only objects of $\cA(\alpha_\ast)$ which do not map onto $M_{\alpha_3}$ are the objects of $\cA(\alpha_1,\alpha_2)$ which are the objects $M_{\alpha_1},M_{\alpha_2}$ and their extensions $M_{\gamma_j}$. These give the terms in the commutator relation $r(\alpha_1,\alpha_2)$ and these lines meet at only two vertices in the atom. All other edges of the atom have at least one abutting edge with a label $\gamma$ where $\gamma\onto \alpha_3$. By the stability condition defining $D(\gamma)$, these points must be inside or on the $\alpha_3$ circle as claimed.
\end{proof}

\subsection{Sliding Lemma and Atomic Deformation Theorem}\label{ss3.4: sliding lemma and Atomic Deformation theorem}

We will prove the Sliding Lemma \ref{lem: sliding lemma} and derive some consequences such as the Atomic Deformation Theorem \ref{Atomic Deformation theorem} which says that every picture for $G(\cS)$ is a linear combination of atoms. First, some terminology. We say that $L'$ is an \und{atomic deformation} of $L$ if $L'$ is a deformation of $L$ plus a linear combination of atoms. Thus the Atomic Deformation Theorem states that every picture has an atomic deformation to the empty picture.

Suppose that $\cS$ is an admissible set of roots with a fixed lateral ordering and let $\omega\in \cS$. Recall that $\cS_-(\omega)$ is the set of all $\beta\le \omega$ in lateral order in $\cS$. In particular, either $\beta=\omega$ or $hom(\omega,\beta)=0$ and $ext(\beta,\omega)=0$. Also, $\cR_-(\omega)$ is the set of all $\beta\in\cS_-(\omega)$ which are hom-orthogonal to $\omega$. Since these are relatively closed subsets of $\cS$, the picture groups $G(\cS_-(\omega))$ and $G(\cR_-(\omega))$ are defined. (See Remark \ref{rem: G(R) is a functor}.) 
 \[\cS_-(\omega):=\{ \beta\in\cS\,:\, \beta\le \omega\text{ in lateral order }\}\]
\[\cR_-(\omega):=\{\beta\in\cS_-(\omega)\,:\, hom(\beta,\omega)=0\}\]for any $\omega\in\cS$.

\begin{lem}[Monomorphism Lemma]\label{monomorphism lemma}
The homomorphism $G(\cR_-(\omega))\to G(\cS_-(\omega))$ induced by the inclusion $\cR_-(\omega)\into \cS_-(\omega)$ has a retraction $\rho$ given on generators by 
\[
	\rho(x(\beta))=\begin{cases} x(\beta) & \text{if } \beta\in \cR_-(\omega)\\
   1 & \text{otherwise}
    \end{cases}
\]
Furthermore, $\rho$ takes pictures and partial pictures $L$ for $G(\cS_-(\omega))$ and gives a picture or partial picture $\rho(L)$ for $G(\cR_-(\omega))$ by simply deleting all edges with labels $x(\beta)$ where $\beta\notin \cR_-(\omega)$.
\end{lem}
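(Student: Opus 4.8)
The plan is to exploit the fact that $\cR_-(\omega)$ is precisely the part of $\cS_-(\omega)$ that lies in the left perpendicular category of $M_\omega$. First I would record the clean reformulation: for $\beta\in\cS_-(\omega)$ we have $ext(\beta,\omega)=0$ automatically, since $\omega$ is the rightmost root of $\cS_-(\omega)$ in lateral order. Hence $M_\beta\in{}^\perp M_\omega:=\{X:\Hom(X,M_\omega)=0=\Ext(X,M_\omega)\}$ if and only if $hom(\beta,\omega)=0$, i.e. if and only if $\beta\in\cR_-(\omega)$. Thus $\cR_-(\omega)=\{\beta\in\cS_-(\omega):M_\beta\in{}^\perp M_\omega\}$, and since ${}^\perp M_\omega$ is a wide subcategory, closed under extensions, the rule ``delete everything not perpendicular to $\omega$'' is the structural content of $\rho$.

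Next I would check that $\rho$ respects every defining relation $x(\beta_i)x(\beta_j)=\prod x(\gamma_k)$, where $\beta_i,\beta_j$ are hom-orthogonal with $ext(\beta_i,\beta_j)=0$ and $\beta_i<\beta_j$ in lateral order, and $\gamma_k=a_k\beta_i+b_k\beta_j$ runs through the intermediate extensions (with $\gamma_1=\beta_j$ and last term $\beta_i$). The key computation is to apply $\Hom(-,M_\omega)$ to the short exact sequence $0\to M_{\beta_i}^{a_k}\to M_{\gamma_k}\to M_{\beta_j}^{b_k}\to 0$: left exactness gives $hom(\gamma_k,\omega)\neq0$ whenever $b_k\ge1$ and $hom(\beta_j,\omega)\neq0$, while the vanishing $\Ext(M_{\beta_j},M_\omega)=0$ (rightmost-ness) makes the restriction to $\Hom(M_{\beta_i}^{a_k},M_\omega)$ surjective, giving $hom(\gamma_k,\omega)\neq0$ whenever $a_k\ge1$ and $hom(\beta_i,\omega)\neq0$. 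A four-way case analysis follows: if both $\beta_i,\beta_j\in\cR_-(\omega)$ then each $M_{\gamma_k}\in{}^\perp M_\omega$, so every $\gamma_k\in\cR_-(\omega)$ and $\rho$ carries the relation to the identical relation of $G(\cR_-(\omega))$; if exactly one, say $\beta_i$, lies outside $\cR_-(\omega)$, then every intermediate $\gamma_k$ (those with $a_k\ge1$) also lies outside, so both sides collapse to $x(\beta_j)$ (and to $x(\beta_i)$ in the mirror case, which includes $\beta_j=\omega$); and if neither is in $\cR_-(\omega)$, all terms die and the relation becomes trivial. In every case the relation is preserved, so $\rho$ is a well-defined homomorphism; since $\rho(x(\beta))=x(\beta)$ for $\beta\in\cR_-(\omega)$, it is a retraction of the inclusion-induced map, which is therefore a monomorphism.

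For the second assertion I would translate this computation into the language of Section \ref{ss3.2: good commutator relations}. In a picture for $G(\cS_-(\omega))$ each vertex carries a good commutator relation $r(\beta_i,\beta_j)$ whose X-letters are $x(\beta_i),x(\beta_j)$ (edges passing straight through) and whose Y-letters are the intermediate $x(\gamma_k)$ (edges terminating at the vertex). Deleting all edges labelled outside $\cR_-(\omega)$ then has exactly the three local effects dictated by the cases above: when both X-letters survive, the vertex and all incident edges are untouched and still realize a relation of $G(\cR_-(\omega))$; when exactly one X-letter survives, that X-edge runs smoothly through the now relation-free point while the other X-edge and all Y-edges are erased, so the vertex simply disappears; when neither survives, the whole vertex and all incident edges vanish. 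Because only X-edges can persist through a deleted vertex and X-edges are smooth there, each remaining edge set $E(x(\beta))$, $\beta\in\cR_-(\omega)$, is still a smoothly embedded $1$-manifold away from the surviving vertices, so $\rho(L)$ is a genuine picture for $G(\cR_-(\omega))$. The partial-picture case follows by applying this to the double and observing that edge deletion commutes with reflection across the $x$-axis, so the boundary word becomes $\rho(\partial L)$.

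The main obstacle will be the homological bookkeeping in the second paragraph: one must pin down exactly which intermediate roots $\gamma_k$ leave $\cR_-(\omega)$, and the cleanest route is the long exact $\Hom(-,M_\omega)$ sequence together with the rightmost-ness of $\omega$, which kills the relevant $\Ext$ term. Once that vanishing is secured, the algebraic relation-checking and its topological shadow are simply two readings of the same case analysis.
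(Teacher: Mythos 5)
Your proposal is correct, and its overall skeleton (check that $\rho$ preserves each defining relation, then read the same case analysis locally at each vertex of a picture) is the same as the paper's; but the mechanism you use for the key algebraic step is genuinely different. The paper actually writes out the proof for the dual lemma (about $\cR_+(\alpha)\into\cS_+(\alpha)$, declaring the present one analogous) and gets the trichotomy --- relation survives intact, collapses to $xx^{-1}$, or dies --- by a \emph{linearity} argument: $\cR_+(\alpha)$ is cut out of $\cS_+(\alpha)$ by the single linear equation $\brk{g(\alpha),\beta}=hom(\alpha,\beta)-ext(\alpha,\beta)=0$, all letters of a relation lie in the plane spanned by its two X-letters, and any two letters are linearly independent; hence if two letters survive, all do, and a lone survivor is then shown not to be a Y-letter by a short subroot argument. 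You instead identify $\cR_-(\omega)$ as the trace of the wide subcategory $^\perp M_\omega$ on $\cS_-(\omega)$ (using rightmost-ness to kill $ext(\beta,\omega)$, just as the paper uses it to reduce the Euler form to $hom$) and establish the same trichotomy by a four-case homological analysis: extension-closure of $^\perp M_\omega$ for the case where both X-letters survive, and the long exact $\Hom(-,M_\omega)$ sequence applied to $0\to M_{\beta_i}^{a_k}\to M_{\gamma_k}\to M_{\beta_j}^{b_k}\to 0$ to show every intermediate root dies when an X-letter does. What each approach buys: the paper's linear-functional trick is shorter and never needs the filtration structure of the intermediate modules, but requires the separate observation that a single survivor must be an X-letter; your case analysis carries more homological bookkeeping (including the implicit structure result that $M_{\gamma_k}$ really is such an extension, which holds since exceptional modules have no self-extensions) but is more explicit about exactly which letters die and gets the X-letter fact for free. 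Your treatment of the picture statement and of partial pictures via doubling agrees with the paper's ``pictures are defined by local conditions'' argument.
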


Figure \ref{Fig: Atomic Deformation Theorem} gives an example of how this lemma is used. The proof is analogous to the proof of the dual statement which goes as follows. Recall that, for any $\alpha$ in an admissible set of roots $\cS$, $\cS_+(\alpha)$ is the set of all $\beta\ge \alpha$ in $\cS$ and $\cR_+(\alpha)$ is the set of all $\beta\in \cS_+(\alpha)$ which are hom-orthogonal to $\alpha$. As in the case of $\cS_-(\omega),\cR_-(\omega)$ these are relatively closed subsets of $\cS$.

\begin{lem}%[Monomorphism Lemma]
\label{dual monomorphism lemma}
The homomorphism $G(\cR_+(\alpha))\to G(\cS_+(\alpha))$ induced by the inclusion $\cR_+(\alpha)\into \cS_+(\alpha)$ has a retraction $\rho$ given on generators by 
\[
	\rho(x(\beta))=\begin{cases} x(\beta) & \text{if } \beta\in \cR_+(\alpha)\\
   1 & \text{otherwise}
    \end{cases}
\]
Furthermore, $\rho$ takes pictures and partial pictures $L$ for $G(\cS_+(\alpha))$ and gives a picture or partial picture $\rho(L)$ for $G(\cR_+(\alpha))$ by simply deleting all edges with labels $x(\beta)$ where $\beta\notin \cR_+(\alpha)$.
\end{lem}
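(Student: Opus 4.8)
The plan is to check directly that the map defined on generators by $\rho(x(\beta))=x(\beta)$ for $\beta\in\cR_+(\alpha)$ and $\rho(x(\beta))=1$ otherwise respects every defining relation of $G(\cS_+(\alpha))$; the retraction property and the statement on pictures then follow formally. First I would record the simplification that, for $\beta\in\cS_+(\alpha)$ with $\beta\neq\alpha$, the lateral ordering forces $hom(\beta,\alpha)=0$ and $ext(\alpha,\beta)=0$; hence $\beta\in\cR_+(\alpha)$ is equivalent to the single condition $hom(\alpha,\beta)=0$, i.e.\ to $M_\beta$ lying in the right perpendicular category of $M_\alpha$. Moreover $ext(\alpha,\beta)=0$ holds for every $\beta\in\cS_+(\alpha)$ (including $\beta=\alpha$, by exceptionality), a fact I will use repeatedly.

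Each relation of $G(\cS_+(\alpha))$ is the good commutator relation $x(\beta_i)x(\beta_j)=x(\gamma_1)\cdots x(\gamma_N)$ attached to a hom-orthogonal pair with $ext(\beta_i,\beta_j)=0$, where $\gamma_1=\beta_j$, $\gamma_N=\beta_i$, and each interior $\gamma_k=a_k\beta_i+b_k\beta_j$ ($a_k,b_k\geq1$) has $M_{\gamma_k}$ filtered with the $M_{\beta_i}$'s at the bottom and the $M_{\beta_j}$'s at the top. I would isolate two homological facts. (a) Since $M_{\beta_i}\into M_{\gamma_k}$ whenever $a_k\geq1$, left exactness of $\Hom(M_\alpha,-)$ gives that $hom(\alpha,\beta_i)\neq0$ implies $hom(\alpha,\gamma_k)\neq0$. (b) For an interior $\gamma_k\in\cS_+(\alpha)$, writing $0\to N\to M_{\gamma_k}\to Q\to0$ with $N$ filtered by $M_{\beta_i}$ and $Q$ by $M_{\beta_j}$, the long exact sequence for $\Hom(M_\alpha,-)$ together with $\Ext(M_\alpha,N)=0$ (which holds because $\Lambda$ is hereditary and $ext(\alpha,\beta_i)=0$) shows that $hom(\alpha,\gamma_k)=0$ forces \emph{both} $hom(\alpha,\beta_i)=0$ and $hom(\alpha,\beta_j)=0$.

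With (a) and (b) I would run the case analysis on whether $hom(\alpha,\beta_i)$ and $hom(\alpha,\beta_j)$ vanish. If both vanish, all $\gamma_k$ lie in $\cR_+(\alpha)$ and $\rho$ fixes the relation verbatim. If $hom(\alpha,\beta_i)\neq0$, then (a) ejects every $\gamma_k$ with $k\geq2$ from $\cR_+(\alpha)$, so both sides of $\rho$ applied to the relation collapse to $\rho(x(\beta_j))$. If $hom(\alpha,\beta_i)=0$ but $hom(\alpha,\beta_j)\neq0$, then (b) ejects every interior $\gamma_k$ while $\gamma_N=\beta_i$ survives, so both sides collapse to $x(\beta_i)$. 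Hence $\rho$ is a well-defined homomorphism, and since it fixes each $x(\beta)$ with $\beta\in\cR_+(\alpha)$ it is a retraction of the inclusion-induced map $G(\cR_+(\alpha))\to G(\cS_+(\alpha))$ of Remark~\ref{rem: G(R) is a functor}, which is therefore a monomorphism.

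For the statement on pictures I would verify that deleting every edge labeled $x(\beta)$ with $\beta\notin\cR_+(\alpha)$ yields a well-formed picture for $G(\cR_+(\alpha))$ by inspecting each vertex. The case analysis shows the deletion is always clean: either all X- and Y-edges survive, leaving a genuine vertex with relation $r(\beta_i,\beta_j)$; or exactly one X-edge survives and passes straight through, so the vertex disappears; or every incident edge is deleted. What rules out a malformed partial vertex are precisely facts (a) and (b): two surviving X-edges force all Y-edges to survive, and a surviving interior Y-edge forces both X-edges to survive. The region labels of $\rho(L)$ are the $\rho$-images of those of $L$. The main obstacle in the whole argument is fact (b): unlike the submodule side, the quotient side is not controlled by left exactness alone and genuinely needs the connecting homomorphism and the vanishing of $\Ext(M_\alpha,N)$, which is where hereditariness enters.
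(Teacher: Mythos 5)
Your argument is correct, and its overall skeleton (show that $\rho$ either preserves or collapses each defining relation, then handle pictures by local inspection at each vertex) matches the paper's; but the mechanism you use for the central step is genuinely different. The paper's proof rests on what it calls ``the key'': since $ext(\alpha,\beta)=0$ for every $\beta\in\cS_+(\alpha)$, membership of $\beta\in\cS_+(\alpha)$ in $\cR_+(\alpha)$ is the \emph{linear} condition $\brk{g(\alpha),\beta}=hom(\alpha,\beta)-ext(\alpha,\beta)=0$ coming from Lemma \ref{lem: Euler pairing using g-vectors}; since any two letters of a commutator relation are linearly independent and span the plane containing all its letters, the moment two letters survive the deletion, all of them do, and the only residual case---a single surviving letter---is ruled out from being a Y-letter by the same subroot-composition observation as your fact (a). You never invoke this linearity; your facts (a) and (b)---left exactness of $\Hom(M_\alpha,-)$, and the connecting-map argument using $\Ext(M_\alpha,N)=0$---are precisely the module-theoretic shadow of the numerical identity $hom(\alpha,\gamma_k)=a_k\,hom(\alpha,\beta_i)+b_k\,hom(\alpha,\beta_j)$, which the paper gets for free from linearity of $\brk{g(\alpha),-}$ and the vanishing of $ext(\alpha,-)$ on $\cS_+(\alpha)$. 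What the paper's route buys is brevity and uniformity (no case analysis over which X-letter dies, and it reuses the $g$-vector machinery of the Appendix); what yours buys is an elementary, self-contained homological argument that records exactly where $ext(\alpha,\beta_i)=0$ and hereditariness enter. One small completion is needed: your Case 1 assertion that $hom(\alpha,\beta_i)=0=hom(\alpha,\beta_j)$ forces every interior $\gamma_k$ into $\cR_+(\alpha)$ (so that $\rho$ fixes the relation verbatim, with the same letter set as the corresponding relation of $G(\cR_+(\alpha))$) is the \emph{converse} of your fact (b), not a consequence of (a) or (b) as stated; it does follow immediately by the same left-exactness induction along $0\to N\to M_{\gamma_k}\to Q\to 0$ (no Ext term is needed for this direction), so the omission is cosmetic rather than a genuine gap.
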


\begin{proof} The key is that $\cR_+(\alpha)$ is given by a linear condition. Since $ext(\alpha,\beta)=0$ for all $ \beta\in \cS_+(\alpha)$ (and $hom(\beta,\alpha)=0$ for all $\beta\neq \alpha$ in $\cS_+(\alpha)$) we have:
\[
	\cR_+(\alpha)=\{\beta\in\cS_+(\alpha)\,:\, \brk{g(\alpha),\beta}=hom(\alpha,\beta)-ext(\alpha,\beta)=0\}.
\]
Since any two letters in any relation in are linearly independent, if two letters in any relation in $G(\cS_+(\alpha))$ lie in $\cR_+(\alpha)$ then all the letters in the relation lie in $\cR_+(\alpha)$. Thus, if only part of the relation survives under the retraction it must be a single letter. This letter, say $\gamma$, cannot be a Y-letter: If it were and $\gamma_1,\gamma_2$ are the X-letters in that relation then $hom(\alpha,\gamma_1)$ and $hom(\alpha,\gamma_2)$ would both be nonzero. Since one of these is a subroot of $\gamma$, this would also make $hom(\alpha,\gamma)\neq0$ and $\gamma\notin \cR_+(\alpha)$. So, none of the letters in such a relation will lie in $\cR_+(\alpha)$. Therefore, the retraction $\cS_+(\alpha)\to \cR_+(\alpha)$ sends relations to relations and induces a retraction of groups $\rho:G(\cS_+(\alpha))\to G(\cR_+(\alpha))$.

Given any picture or partial picture $L$ for $G(\cS_+(\alpha))$, each vertex has a relation $r$ which has the property that either $\rho(r)=r$ or $\rho(r)$ is an unreduced relation of the form $xx^{-1}$ or $\rho(r)$ is empty. In the second case $\rho(r)=xx^{-1}$ we consider the vertex as part of the smooth curve $E(x)$. Removal of all edges with labels not in $\cR_+(\alpha)$ therefore keeps $L$ looking locally like a picture for $\cR_+(\alpha)$. But pictures and partial pictures are defined by local conditions.
\end{proof}

Using the Monomorphism Lemma \ref{monomorphism lemma}, we can now state and prove the key lemma about pictures for $G(\cS)$. Recall that $E(\omega)$ is the union of the set of edges with label $x(\omega)$ and that, for any root $\beta\in\cR_-(\omega)$, any vertex with relation $r(\beta,\omega)$ or $r(\beta,\omega)^{-1}$ has Y-edges on the positive side of the X-line $E(\omega)$. (For example, in Figure \ref{fig03}, $\alpha,\beta$ and all letters $\gamma_i$ in $r(\alpha,\beta)$ lie in $\cR_-(\omega)$. So the edges corresponding to the commutator relations $r(\gamma_i,\omega)$ for all letters $\gamma_i$ in $r(\alpha,\beta)$ lie in the interior of the brown circle $E(\omega)=D(\omega)$. Since Figure \ref{fig03} is an atom, the edges are curved in the positive direction.) We also note that the base point direction is on the negative side of both X-lines at each crossing.

 \begin{lem}[Sliding Lemma]\label{lem: sliding lemma}
Suppose that $L$ is a picture for $G(\cS)$ so that $E(\omega)$ is a disjoint union of simple closed curves. Let $U$ be one of the components of the complement of $E(\omega)$ and let $\Sigma=\overline U\cap E(\omega)$ be the boundary of the closure $\overline U$ of $U$. Suppose that $U$ is on the negative side of $\Sigma$ and that all edges in $L\cap U$ have labels $x(\beta)$ for $\beta\in\cR_-(\omega)$. Then there is an atomic deformation $L\sim L'$ which alters $L$ only in an arbitrarily small neighborhood $V$ of $\overline U$ so that $L'\cap V$ contains no edges with labels $\ge \omega$ in lateral order.
\end{lem}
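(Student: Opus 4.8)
The plan is to collapse the $\omega$-wall $\Sigma$ inward across $U$ until it disappears, trading each interaction with the interior of $L$ for a controlled local move, and organizing these moves so that the cumulative change is a deformation plus a sum of atoms. First I would use isotopy to put $L\cap \overline U$ in general position: finitely many vertices in the interior of $U$, each carrying a commutator relation $r(\beta_i,\beta_j)$ with $\beta_i,\beta_j\in\cR_-(\omega)$ (since by hypothesis every edge in $U$ is labelled by $\cR_-(\omega)$), and finitely many points of $\Sigma$ where an $\cR_-(\omega)$-edge crosses, each such crossing being a vertex with relation $r(\beta,\omega)^{\pm1}$ whose extension $Y$-edges lie on the positive side of $\Sigma$.

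The heart of the argument is to sweep $\Sigma$ into $U$, recording what happens as it passes the three kinds of features. (i) When $\Sigma$ sweeps across a bare $\beta$-arc, the crossing vertex slides along it; once $\Sigma$ has passed the innermost point of the arc, its two crossing vertices (of types $r(\beta,\omega)$ and $r(\beta,\omega)^{-1}$) come together and cancel, freeing the arc to the positive side. This is an isotopy together with the vertex-cancellation and edge-concordance moves. (ii) When $\Sigma$ sweeps across an interior vertex $v$ with relation $r(\beta_1,\beta_2)$, the before/after pictures differ by exactly the atom $A_\cS(\beta_1,\beta_2,\omega)$: here $\beta_1,\beta_2,\omega$ are pairwise hom-orthogonal with $ext(\beta_i,\omega)=0$ (because $\beta_i<\omega$ in lateral order), so this atom is defined, and by Proposition \ref{prop: properties of atoms} its unique outside vertex carries $r(\beta_1,\beta_2)$ while its inside carries $r(\beta_2,\omega)^{-1}$, precisely the discrepancy produced by pushing $\omega$ past $v$. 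Adding this atom is, by definition, an atomic deformation. Crucially, every new edge created in (i) and (ii) is an extension $\gamma_k=a_k\beta+b_k\omega$, which by Remark \ref{rem: commutator in lateral order} satisfies $\gamma_k<\omega$ in lateral order; hence no edge $\ge\omega$ is ever introduced into $V$.

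I would run this as an induction on the number of interior vertices of $L\cap U$ together with the number of $\Sigma$-crossings, each sweep step strictly decreasing this complexity. The Monomorphism Lemma \ref{monomorphism lemma} enters to keep the bookkeeping honest: since all surviving labels in $\overline U$ lie in $\cS_-(\omega)$, applying the retraction $\rho$ to $G(\cR_-(\omega))$ deletes exactly the $\omega$-edges and turns the configuration inside $U$ into an honest partial picture for $G(\cR_-(\omega))$, which certifies that the words read along $\Sigma$ before and after a sweep agree and that the atoms we insert close up correctly. When the sweep is complete the remaining $\omega$-curves bound empty regions, which are removed by the circular-edge concordance move, so that $V$ retains only edges labelled by extensions $\gamma_k<\omega$.

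I expect the main obstacle to be step (ii): verifying rigorously that sweeping the $\omega$-wall across a single $r(\beta_1,\beta_2)$-vertex changes $L$ by precisely $\pm A_\cS(\beta_1,\beta_2,\omega)$, matching all the extension edges together with their normal orientations and basepoint directions, rather than by some other relation-picture. Getting the orientations and the identification of the $Y$-edges with the correct side of $E(\omega)$ exactly right, uniformly across all the crossings as $\Sigma$ collapses, is the delicate part; everything else reduces to the standard isotopy, concordance, and cancellation moves of the deformation calculus in Definition \ref{def: planar picture}.
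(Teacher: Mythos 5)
Your overall strategy coincides with the paper's: induct on the number of vertices in the region, trade each interior $r(\beta_1,\beta_2)$-vertex for one atom $A_\cS(\beta_1,\beta_2,\omega)$, cancel the paired $r(\beta,\omega)^{\pm1}$ crossing vertices on bare arcs, remove leftover closed curves by concordance, and observe that every newly created edge carries a label $<\omega$ in lateral order. So the ingredients and the inductive structure are the right ones.

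However, the step you yourself flag as the ``main obstacle'' is a genuine gap in your write-up, and it is exactly the point where the paper organizes the argument differently so that nothing delicate needs to be checked. ``Sweeping $\Sigma$ past a vertex'' is not one of the allowed deformation moves (isotopy, concordance, cancellation), so your claim that the before/after pictures differ by precisely $\pm A_\cS(\beta_1,\beta_2,\omega)$ --- including the matching of all the extension edges $E(\gamma_i)$, their normal orientations, and the basepoint directions --- is an unproved assertion, and proving it head-on would amount to re-deriving the internal structure of the atom. The paper never moves $\Sigma$ at all: it inserts a small \emph{disjoint} copy of $\mp A_\cS(\beta_1,\beta_2,\omega)$ into the complementary region containing the basepoint direction of $v$ (this region is available since, the crossing edges being bare, all basepoint directions point into $U$). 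By Proposition \ref{prop: properties of atoms} this atom has exactly one vertex outside its $\omega$-circle, carrying the relation $r(\beta_1,\beta_2)^{\mp}$, the inverse of the relation at $v$; since the two basepoint directions can be joined by a path missing the picture, the standard vertex-cancellation move applies verbatim --- no matching of edges or orientations has to be verified, because cancellation only requires inverse relations and a connecting path, and the edges hook up automatically afterwards. Iterating kills all interior vertices, the region $U'$ bounded by $\Sigma$ and the new $\omega$-ovals then contains no vertices, and your step (i) finishes the job. Note also that the Monomorphism Lemma \ref{monomorphism lemma} is not needed for any of this bookkeeping (the cancellation move and $\psi(L)$ already guarantee consistency); the retraction $\rho$ is what the paper uses later, in the proof of the Atomic Deformation Theorem \ref{Atomic Deformation theorem}.
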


\begin{proof} By assumption, every edge which crosses $\Sigma$ has a label $x(\beta)$ where $\beta\in\cR_-(\omega)$. This implies that all Y-edges at all vertices on $\Sigma$ lie outside the region $U$. So, at each vertex of $\Sigma$, only one edge $E(\beta)$ goes into the region $U$. Also, all basepoint directions of all vertices on $\Sigma$ lie inside $U$.

The proof of the lemma is by induction on the number of vertices in the region $V$ containing $\Sigma$. Suppose first that this number is zero. Then $\Sigma$ has no vertices and $L\cap U$ is a union of disjoint simple closed curves which can be eliminated by concordance one at a times starting with the innermost simple closed curve. This includes $\Sigma$. The result has no edges with labels $\ge\omega$.

Suppose next that $L$ has vertices on the set $\Sigma$ but no vertices in the region $U$ enclosed by $\Sigma$. Then every edge of $L$ in $U$ is an arc connecting two vertices on $\Sigma$ and the negative side of each arc has a path connecting the two basepoint directions at these two vertices. So, we can cancel all pair of vertices and we will be left with no vertices in $V$. As before, we can then eliminate all closed curves in $V$ including $\Sigma$ which has now become a union of simple closed curves.

Finally, suppose that $U$ contains a vectex $v$ having relation $r(\alpha,\beta)^\pm$. So, $v$ contributes $\pm g\brk{r(\alpha,\beta)}$ to the algebraic expression for $L$. Then $\alpha,\beta\in\cR_-(\omega)$ by assumption. Now add the atom $\mp A(\alpha,\beta,\omega)$ (which resembles Figure \ref{fig03}) in the region containing the basepoint direction of $v$. (See the left side of Figure \ref{Fig: sliding lemma}.) This adds $\mp gA(\alpha,\beta,\omega)$ to the algebraic expression for $L$. The atom has a circle labeled $x(\omega)$ oriented inward with exactly one vertex outside this circle with relation $r(\alpha,\beta)^\mp$ (the mirror image of the relation at $v$) by Proposition \ref{prop: properties of atoms}. The new vertex cancels the vertex $v$. (See the right side of Figure \ref{Fig: sliding lemma}.) Repeating this process eliminates all vertices in the new region $U'$. 

After that, all edges in $\overline U'$, the closure of $U'$ can be eliminated. This eliminates all edges with label $\omega$ from $V$. However, it also introduces new edge sets (the interior of the $\omega$ oval in the atom). However, these all have labels $<\omega$. So, we are done.

Since the entire process was a sequence of picture deformations and addition of $\ZZ G(\cS)$ multiples of atoms, it is an atomic deformation.
\end{proof}

\begin{figure}[htbp]
\begin{center}
\begin{tikzpicture}[scale=.5]
\begin{scope}[xshift=-8cm] % Left drawing
\coordinate (S1) at (-4.8,4.3);
\coordinate (S2) at (-4.8,1.8);
\coordinate (S3) at (0,3.1);
\coordinate (V) at (0,3.6);
\coordinate (S4) at (0,1.5);
\coordinate (S5) at (4.8,4.3);
\coordinate (S6) at (4.8,1.8);
\coordinate (S7) at (4.8,3.1);
\foreach \x in {S1,S2,S3,S4,S5,S6,S7}
\draw (\x) node{$\ast$};
\draw (V) node{$v$};
\coordinate (A) at (-1,0);
\coordinate (B) at (1,0);
\coordinate (C) at (0,-1.5);
\coordinate (A1) at (-3,1);
\coordinate (B1) at (3,1);
\coordinate (C1) at (-1.8,-2.8);
\coordinate (G) at (2.2,0);
\coordinate (X) at (-1,-.2);
\coordinate (Y) at (2,-1.3);
\coordinate (C2) at (-4.7,-2.8);
\coordinate (C3) at (4.7,-2.8);
\coordinate (D2) at (-4.7,0);
\coordinate (D3) at (4.7,0);
\coordinate (A2) at (-3,2.2);
\coordinate (B2) at (3,2.1);
\coordinate (G1) at (4,3);
\coordinate (U) at (3.5,-2);
\draw[color=blue] (U) node{$U'$};
\draw (A1) node{$\alpha$};
\draw (A2) node{$\alpha$};
\draw (B1) node{$\beta$};
\draw (B2) node{$\beta$};
\draw (G) node{$\gamma$};
\draw (G1) node{$\gamma$};
\draw[thick] (-5,2)--(-6.5,1);
\draw[thick] (5,2)--(6.5,1);
\draw[thick] (-5,4.6)--(-6.5,3.6);
\draw[thick] (5,4.6)--(6.5,3.6);
\draw[thick] (5,3.3)--(6.5,2.5);
\draw[thick] (-6.5,1.6)--(6.5,5);
\draw[thick] (6.5,1.6)--(-6.5,5);
\draw[thick] (0,3.3)--(6.5,3.3);
\draw[very thick, color=blue] (-5,5)--(-5,-3.5);
\draw[very thick, color=blue] (5,5)--(5,-3.5);
\begin{scope}
\clip (0,-2) rectangle (2.2,2);
\draw[ thick] (0,0) ellipse [x radius=2cm, y radius=1.3cm];
\end{scope}
\begin{scope}
\clip (C) ellipse [x radius=2cm, y radius =1.5cm];
\draw[thick] (X) ..controls (-2,-2) and (0,-3)..(Y);
\clip (-1.2,-2) rectangle (2,0);
\draw[thick] (0,-.7) ellipse[x radius=1.69cm, y radius=1.1cm];
\end{scope}
\draw[ thick] (A) ellipse [x radius=2cm, y radius =1.5cm];
\draw[ thick] (B) ellipse [x radius=2cm, y radius =1.5cm];
\draw[very thick, color=blue] (C) ellipse [x radius=2cm, y radius =1.5cm] (C1) node{$\omega$} (D2) node{$\omega$} (D3) node{$\omega$} (C2) node{$\Sigma$}(C3) node{$\Sigma$};
\end{scope} % end of Left drawing
\draw[ thick,->] (-.5,0)--(.5,0);
\begin{scope}[xshift=8cm] % Right drawing
\coordinate (S1) at (-4.8,4.3);
\coordinate (S2) at (-4.8,1.8);
\coordinate (S3) at (0,3.1);
\coordinate (V) at (0,3.6);
\coordinate (S4) at (0,1.5);
\coordinate (S5) at (4.8,4.3);
\coordinate (S6) at (4.8,1.8);
\coordinate (S7) at (4.8,3.1);
\coordinate (U) at (3.5,-2);
\draw[color=blue] (U) node{$U'$};
\foreach \x in {S1,S2,S3,S4,S5,S6,S7}
\draw (\x) node{$\ast$};
\draw (V) node{$v$};
\coordinate (A) at (-1,0);
\coordinate (B) at (1,0);
\coordinate (C) at (0,-1.5);
\coordinate (A1) at (-3,1);
\coordinate (B1) at (3,1);
\coordinate (C1) at (-1.8,-2.8);
\coordinate (G) at (2.2,0);
\coordinate (X) at (-1,-.2);
\coordinate (Y) at (2,-1.3);
\coordinate (C2) at (-4.7,-2.8);
\coordinate (C3) at (4.7,-2.8);
\coordinate (D2) at (-4.7,0);
\coordinate (D3) at (4.7,0);
\coordinate (A2) at (-3,2.2);
\coordinate (B2) at (3,2.1);
\coordinate (G1) at (4,3);
\draw (A1) node{$\alpha$};
\draw (A2) node{$\alpha$};
\draw (B1) node{$\beta$};
\draw (B2) node{$\beta$};
\draw (G) node{$\gamma$};
\draw (G1) node{$\gamma$};
\draw[thick] (-5,2)--(-6.5,1);
\draw[thick] (5,2)--(6.5,1);
\draw[thick] (-5,4.6)--(-6.5,3.6);
\draw[thick] (5,4.6)--(6.5,3.6);
\draw[thick] (5,3.3)--(6.5,2.5);
\draw[thick] (-6.5,1.6)--(6.5,5);
\draw[thick] (6.5,1.6)--(-6.5,5);
\draw[thick] (0,3.3)--(6.5,3.3);
\draw[very thick, color=blue] (-5,5)--(-5,-3.5);
\draw[very thick, color=blue] (5,5)--(5,-3.5);
\begin{scope}
\clip (0,-2) rectangle (2.2,2);
\draw[ thick] (0,0) ellipse [x radius=2cm, y radius=1.3cm];
\end{scope}
\begin{scope}
\clip (C) ellipse [x radius=2cm, y radius =1.5cm];
\draw[thick] (X) ..controls (-2,-2) and (0,-3)..(Y);
\clip (-1.2,-2) rectangle (2,0);
\draw[thick] (0,-.7) ellipse[x radius=1.69cm, y radius=1.1cm];
\end{scope}
\draw[ thick] (A) ellipse [x radius=2cm, y radius =1.5cm];
\draw[ thick] (B) ellipse [x radius=2cm, y radius =1.5cm];
\draw[very thick, color=blue] (C) ellipse [x radius=2cm, y radius =1.5cm] (C1) node{$\omega$} (D2) node{$\omega$} (D3) node{$\omega$} (C2) node{$\Sigma$}(C3) node{$\Sigma$};
\draw[fill,color=white] (-1,.61) rectangle (1,4);
\clip (-1.02,.6) rectangle (1.02,4);
\draw[thick] (1,2.27) ellipse [x radius=3mm, y radius=7.7mm];
\draw[thick] (-1,2.27) ellipse [x radius=3mm, y radius=7.7mm];
\draw[thick] (1.03,2.21) ellipse [x radius=5mm, y radius=11mm];
\draw[thick] (1.03,2.05) ellipse [x radius=7mm, y radius=15.1mm];
\draw[thick] (-1.03,2.05) ellipse [x radius=7mm, y radius=15.1mm];
\end{scope} % end of Right drawing
\end{tikzpicture}
\caption{Illustrating proof of Sliding Lemma \ref{lem: sliding lemma}: $\Sigma$ in blue is a disjoint union of $E(\omega)$ closed curves which encloses a region $\overline U=\Sigma\cup U$. All Y-edges for vertices on $\Sigma$ lie outside $U$. The atom $\cA(\alpha,\beta,\omega)$ in the proof has already been added on the left. The new region $U'$ is the complement of the new $\omega$ oval in $U$. The vertex $v$ has been cancelled with the vertex in the atom on the right.}
\label{Fig: sliding lemma}
\end{center}
\end{figure}

\begin{thm}[Atomic Deformation Theorem]\label{Atomic Deformation theorem}
Suppose that $\cS$ is an admissible set of real Schur roots. Then any picture for $G(\cS)$ has a null atomic deformation. I.e., it is deformation equivalent to a $\ZZ G$ linear combination of atoms. Equivalently, the $\ZZ G(\cS)$-module $P(G(\cS))$ is generated by atoms.
\end{thm}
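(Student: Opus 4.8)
The plan is to induct down the lateral order, clearing the edges of one label at a time, using the Sliding Lemma \ref{lem: sliding lemma} as the engine that trades each circle labelled by the current top root for a $\ZZ G(\cS)$-multiple of atoms. Fix the admissible set $\cS$ together with a lateral ordering and let $L$ be a picture for $G(\cS)$. Let $\omega$ be the largest root, in lateral order, whose generator $x(\omega)$ actually occurs in $L$. It suffices to produce an atomic deformation of $L$ after which $x(\omega)$ no longer occurs and no new label $\ge\omega$ has been introduced: repeating this, the set of labels present strictly decreases from the top of the lateral order, and after finitely many steps we reach the empty picture. Note that this keeps us inside the single fixed admissible set $\cS$ at every stage, so we never need a smaller root-subset to be admissible.

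First I would check that $E(\omega)$ is a disjoint union of simple closed curves. Since $\cS$ is admissible, $G(\cS)$ has only good commutator relations, so by the structure proposition preceding Corollary \ref{cor: pictures have same number of x, x-inv} each edge set is a union of closed curves together with arcs whose endpoints are exactly the vertices at which the label is a Y-letter. In a good relation $r(\beta_i,\beta_j)$ the Y-letters are the intermediate extensions $\gamma_k=a_k\beta_i+b_k\beta_j$, and these satisfy $\beta_i\le\gamma_k\le\beta_j$ in lateral order because $\beta_i\subset\gamma_k\onto\beta_j$ forces $hom(\beta_i,\gamma_k)\neq0$ and $hom(\gamma_k,\beta_j)\neq0$. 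Were $x(\omega)$ a Y-letter at some vertex of $L$, the larger X-letter $\beta_j$ would also occur in $L$ and satisfy $\beta_j\ge\omega$ with $\beta_j\neq\omega$, contradicting the maximality of $\omega$ among labels present. Hence $\omega$ occurs only as an X-letter and $E(\omega)$ is a disjoint union of simple closed curves, as required to invoke the Sliding Lemma.

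Next I would clear the $\omega$-circles one by one. Along any such circle every vertex carries a relation $r(\beta,\omega)^{\pm}$ with $\beta$ hom-orthogonal to $\omega$, hence $\beta\in\cR_-(\omega)$; its Y-edges are labelled by the extensions $\gamma_k$, which have $\omega$ as a quotient root and therefore satisfy $hom(\gamma_k,\omega)\neq0$, so $\gamma_k\notin\cR_-(\omega)$, and as noted just before Lemma \ref{lem: sliding lemma} these Y-edges leave on the positive side of the X-line $E(\omega)$. Thus locally the negative side of an $\omega$-circle meets only edges with labels in $\cR_-(\omega)$. Choosing an innermost configuration and a component $U$ lying on the negative side of $\Sigma=\overline U\cap E(\omega)$ whose interior edges all lie in $\cR_-(\omega)$, the hypotheses of Lemma \ref{lem: sliding lemma} are met, and it yields an atomic deformation altering $L$ only near $\overline U$ after which that neighborhood contains no edge labelled $\ge\omega$; the atoms consumed are the $A_\cS(\beta,\cdot,\omega)$ used to cancel the vertices $r(\beta,\omega)^{\pm}$. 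Iterating removes every occurrence of $x(\omega)$ while introducing only labels strictly below $\omega$, which is exactly the reduction required in the first paragraph. The base case, when $\omega$ is the only label left, is subsumed, since $E(\omega)$ is then a vertex-free union of closed curves removed by concordance.

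The hard part will be the verification underlying the third paragraph: that one can always expose a region $U$ on the negative side of an $\omega$-circle all of whose interior edges, not merely those abutting the circle, carry labels in $\cR_-(\omega)$. This is where the Monomorphism Lemma \ref{monomorphism lemma} enters: its retraction $\rho:G(\cS)\to G(\cR_-(\omega))$, which simply deletes every edge not labelled in $\cR_-(\omega)$ and still returns a valid picture, together with the stability-condition fact that every wall $D(\gamma)$ with $hom(\gamma,\omega)\neq0$ lies on the positive side of $D(\omega)$, confines all non-$\cR_-(\omega)$ material to the positive side of $E(\omega)$ and lets us peel off the negative-side regions in the correct order. Organizing this confinement for nested circles, and checking that the small-neighborhood deformations of the Sliding Lemma at different circles are carried out disjointly and do not reintroduce top labels, is the bulk of the bookkeeping; the genuine algebra is entirely contained in the good-commutator structure and in Lemma \ref{lem: sliding lemma}.
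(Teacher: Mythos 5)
Your outer skeleton matches the paper's proof: induct on the laterally largest label $\omega$ occurring in $L$, observe that $x(\omega)$ cannot be a Y-letter (so $E(\omega)$ is a disjoint union of simple closed curves), and feed innermost circles to the Sliding Lemma so that only labels strictly below $\omega$ survive. Your verification of these points is correct. But the step you yourself flag as ``the hard part'' contains a genuine gap, and the mechanism you propose for it does not work. You claim that stability conditions --- the fact that a wall $D(\gamma)$ with $hom(\gamma,\omega)\neq0$ lies on the positive side of $D(\omega)$ --- confine all non-$\cR_-(\omega)$ material to the positive side of $E(\omega)$. That is a fact about the geometric semi-invariant picture $L(\cS)\subset S^{n-1}$, not about an arbitrary picture for the group $G(\cS)$ in the sense of Definition \ref{def: planar picture}. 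An abstract picture is only constrained locally at its vertices: the edges \emph{crossing} an $\omega$-circle must indeed carry labels in $\cR_-(\omega)$, but the region on the negative side of an innermost circle may perfectly well contain, say, a floating null-homotopic subpicture whose edges carry labels $\gamma$ with $hom(\gamma,\omega)\neq0$, or the negative side may even be the unbounded region containing the rest of $L$. So the hypothesis of the Sliding Lemma --- that \emph{all} edges in $U$ have labels in $\cR_-(\omega)$ --- simply need not hold, and no choice of ``innermost configuration'' forces it. Relatedly, you cannot fix this by ``applying $\rho$ to the negative-side region'': deleting edges from part of $L$ changes the picture's class in $P(G(\cS))$ and is not an atomic deformation.

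The paper closes exactly this gap with a splitting (doubling) argument that you are missing. Take $\Sigma'$ parallel to $\Sigma$ on its negative side; it crosses only edges with labels in $\cR_-(\omega)$, which are fixed by the retraction $\rho$ of the Monomorphism Lemma \ref{monomorphism lemma}. Writing $L=L_0\cup L_1$ with $L_0$ the part enclosed by $\Sigma'$, and $L_0'$ the mirror image of $L_0$ through $\Sigma'$, one has in the group of partial pictures
\[
L=L_0+L_1=L_0+\rho(L_0')+\rho(L_0)+L_1=L''+L', \qquad L'=\rho(L_0)\cup L_1,\quad L''=L_0\cup\rho(L_0'),
\]
since $\rho(L_0)+\rho(L_0')=\rho(L_0+L_0')=\rho(0)=0$. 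Thus $L$ is deformation equivalent to $L'\sqcup L''$, and in whichever of $L',L''$ the circle $\Sigma$ survives, its negative side is by construction $\rho(L_0)$ or $\rho(L_0')$, whose labels all lie in $\cR_-(\omega)$; only then does the Sliding Lemma apply. This insertion of a mirror pair along $\Sigma'$, legitimized by the algebra of the partial-picture group $Q$, is the essential idea needed to make your third paragraph valid; without it the induction cannot get started on a general picture.
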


This theorem follows from the Sliding Lemma and we will see that it implies Lemma \ref{lem C: beta m occurs a fixed number of times}.

\begin{proof} 

Let $\cS=(\beta_1,\cdots,\beta_m)$ be an admissible set of roots. Let $\beta^1,\cdots,\beta^m$ be the same set rearranged in lateral order. Let $\cR^k$ be the set of all elements of $\cS$ which are $\le \beta^k$ in lateral order. Thus, $\cR^k=\cS_-(\beta^k)$. Take $k$ minimal so that the labels which occurs in $L$ all lie in $\cR^k$. If $k=1$ then $L$ has no vertices and is a disjoint union of simple closed curves which are null homotopic. By induction, it suffices to eliminate $\omega=\beta^k$ as a label from the picture $L$ by picture deformations and addition of atoms without introducing labels $\beta^j$ for $j>k$.

Since $\omega$ is a rightmost element in the set $\cR^k$, $x(\omega)$ does not occur as a Y-letter at any vertex of $L$. Therefore the edge set $E(\omega)$ is a disjoint union of simple closed curves. Let $\Sigma$ be innermost such curve and $\Sigma'$ be a curve parallel to $\Sigma$ on the negative side. (See Figure \ref{Fig: Atomic Deformation Theorem}.) Then $\Sigma'$ crosses on those edges $E(\beta)$ where $\beta\in \cS_-(\omega)$ are hom-orthogonal to $\omega$. In other words, $\beta\in \cR_-(\omega)$. 

Let $L_0'$ be the mirror image of $L_0$ through $\Sigma'$. Then $L_0\cup L_0'$ is null deformable, i.e., $L_0+L_0'=0$ in the group of partial pictures $Q(\cR^k)$. Since $\Sigma'$ meets only edges with labels in $\cR_-(\omega)$, we can apply the retraction $\rho$ from the Monomorphism Lemma \ref{monomorphism lemma} to just one side of $\Sigma'$ and still have a well-defined picture. This construction gives us two pictures: $L'=\rho(L_0)\cup L_1$ and $L''=L_0\cup \rho(L_0')$.

\underline{Claim} $L$ is deformation equivalent to $L'\coprod L''$, i.e., $L=L'+L''$ in the group $\cP(\cR^k)$.

Pf: The group of pictures $\cP(\cR^k)$ is a a subgroup of the group of partial pictures $\cQ(\cR^k)$ and in that group we have:
\[
	L=L_0+L_1=L_0+\rho(L_0')+\rho(L_0)+L_1=L''+L'
\]
since $\rho(L_0')+\rho(L_0)=\rho(L_0+L_0')=\rho(0)=0$.

The simple closed curve $\Sigma$ lies either in $L'$ or $L''$. If $\Sigma\subset L'$ then $\Sigma$ can be removed by $L'$ by an atomic deformation by the Sliding Lemma \ref{lem: sliding lemma} since the edges inside $\Sigma$ are in $\cR_-(\omega)$, bing in $\rho(L_0)$. If $\Sigma\subset L''$ (as drawn in Figure \ref{Fig: Atomic Deformation Theorem}), the region outside $\Sigma$ has all labels in $\cR_-(\omega)$. So, it can be removed by Lemma \ref{lem: sliding lemma}. In both cases, the number of $E(\omega)$ components in $L'\coprod L''$ (the same as the number of components in $L$) has been reduced by one by an atomic deformation without introducing any new labels $\ge\omega$. By induction on the number of components of $E(\omega)$, this set can be removed and $k$ can be reduced by one. So, by induction on $k$, we are done. The entire picture can be deformed into nothing by atomic deformation.
\end{proof}

\begin{figure}[htbp]
\begin{center}
\begin{tikzpicture}[scale=.5]
\begin{scope}[xshift=-5cm] % Left drawing
\draw[very thick, color=blue](0,0) ellipse[x radius=2cm, y radius= 4cm];
\draw (1.2,0) node{$L_0$};
\draw (2.8,4) node{$L_1$};
\draw[color=red] (0,0) ellipse[x radius=2.3cm, y radius= 4.3cm];
\draw[color=red] (2.3,0) node[right]{$\Sigma'$};
\draw[color=blue] (-2,0) node[right]{$\Sigma$};
\clip (0,0) ellipse[x radius=2.6cm, y radius= 4.6cm];
\draw[ thick](2,1) ellipse[x radius=2cm, y radius= 3cm];
\draw[ thick](2,-1) ellipse[x radius=2cm, y radius= 3cm];
\clip (0,0) ellipse[x radius=2cm, y radius= 4cm];
\draw[ thick](1,.7) ellipse[x radius=2cm, y radius= 2.85cm];
\draw[ thick](1,-.7) ellipse[x radius=2cm, y radius= 2.85cm];
\clip (1,.7) ellipse[x radius=2cm, y radius= 2.85cm];
\clip (1,-.7) ellipse[x radius=2cm, y radius= 2.85cm];
\draw[thick] (3.1,0) circle[radius=3.5cm];
\end{scope} % end of Left drawing
\draw[ thick,->] (-.5,0)--(.5,0);
\begin{scope}[xshift=5cm] % Middle drawing
\draw (1.2,0) node{$L_0$};
\draw (4.5,3.5) node{$\rho(L_0')$};
\draw[color=red] (0,0) ellipse[x radius=2.3cm, y radius= 4.3cm];
\draw[color=red] (2.3,0) node[right]{$\Sigma'$};
\draw[very thick, color=blue](0,0) ellipse[x radius=2cm, y radius= 4cm];
\draw[ thick](2,1) ellipse[x radius=2cm, y radius= 3cm];
\draw[ thick](2,-1) ellipse[x radius=2cm, y radius= 3cm];
\clip (0,0) ellipse[x radius=2cm, y radius= 4cm];
\draw[ thick](1,.7) ellipse[x radius=2cm, y radius= 2.85cm];
\draw[ thick](1,-.7) ellipse[x radius=2cm, y radius= 2.85cm];
\clip (1,.7) ellipse[x radius=2cm, y radius= 2.85cm];
\clip (1,-.7) ellipse[x radius=2cm, y radius= 2.85cm];
\draw[thick] (3.1,0) circle[radius=3.5cm];
\end{scope} % end of Middle drawing
\begin{scope}[xshift=14cm] % Right drawing
\draw[color=red] (0,0) ellipse[x radius=2.3cm, y radius= 4.3cm];
\draw[color=red] (2.3,0) node[right]{$\Sigma'$};
\draw (-1,0) node{$\rho(L_0)$};
\draw (2.8,4) node{$L_1$};
\clip (0,0) ellipse[x radius=2.6cm, y radius= 4.6cm];
\draw[ thick](2,1) ellipse[x radius=2cm, y radius= 3cm];
\draw[ thick](2,-1) ellipse[x radius=2cm, y radius= 3cm];
\end{scope} % end of right drawing
\end{tikzpicture}
\caption{Illustrating proof of Atomic Deformation Theorem \ref{Atomic Deformation theorem}: $\Sigma'$ (in red) is on the negative side of an innermost $E(\omega)$ curve $\Sigma$ (in blue). The picture $L=L_0\cup L_1$, on the left, is deformation equivalent to the disjoint union of two pictures: $L''=L_0\cup \rho(L_0')$, in the middle, and $L'=\rho(L_0)\cup L_1$ on the right. The $E(\omega)$ component $\Sigma$ lies either in $L'$ or $L''$. (Here it is in $L''$ in the middle.) In either case, it can be removed by the Sliding Lemma \ref{lem: sliding lemma}.}
\label{Fig: Atomic Deformation Theorem}
\end{center}
\end{figure}

\subsection{Proofs of Lemmas \ref{lem C: beta m occurs a fixed number of times}, \ref{lem E: beta m only commutes with hom orthogonal roots}}\label{ss3.5: proof of lemmas C,E}

The proofs of Lemmas \ref{lem C: beta m occurs a fixed number of times} and \ref{lem E: beta m only commutes with hom orthogonal roots} are very similar.

\begin{proof}[Proof of Lemma \ref{lem C: beta m occurs a fixed number of times}]
Suppose that $w,w'$ are expressions for the same element of $G(\cS)$ and $\pi(w)$, $\pi(w')$ are equal as words in the generators of $G(\cS_0)$. This means that $\pi(w^{-1}w')$ reduces to the trivial (empty) word in $G(\cS_0)$.

Let $L$ be a partial picture giving the proof that $w^{-1}w'$ is trivial in $G(\cS)$. Then $\pi(L)$ can be completed to a true picture $L_0$ for the group $G(\cS_0)$ by joining together cancelling letters in $\pi(w^{-1}w')$. By the Atomic Deformation Theorem \ref{Atomic Deformation theorem}, $L_0$ is equivalent to a sum of atoms. However, each atom $A$ for $G(\cS_0)$ can be lifted to an atom $\tilde A$ for $G(\cS)$ by definition of the atoms. Therefore, up to deformation equivalence, $L$ can be lifted to a picture $\tilde L$ for $G(\cS)$. By Corollary \ref{cor: pictures have same number of x, x-inv}, the number of vertices of $\tilde L$ having $x(\beta_m)$ as Y-letter is equal to the number of vertices having $x(\beta_m)^{-1}$ as Y-letter. This implies that the number of vertices in $L_0$ lifting to ones in $\tilde L$ having $x(\beta_m)^{-1}$ as Y-letter is equal to the number of vertices in $L_0$ lifting to ones in $\tilde L$ having $x(\beta_m)^{-1}$ as Y-letter are equal. So, the number of times $x(\beta_m),x(\beta_m)^{-1}$ occur as Y-letters in $L$ are equal. So, the number of times that $x(\beta_m),x(\beta_m)^{-1}$ occur in the word $w^{-1}w'$ are equal. So, $x(\beta_m)$ occurs the same number of times in the words $w,w'$ as claimed.
\end{proof}

\begin{proof}[Proof of Lemma \ref{lem E: beta m only commutes with hom orthogonal roots}] Recall that $\beta_m$ is the last element of an admissible set $\cS$. Lemma \ref{lem E: beta m only commutes with hom orthogonal roots} says that if $w_0$ is a positive expression for some element of $G(\cS)$ which commutes with $x(\beta_m)$ then every letter of $w_0$ commutes with $\beta_m$. To prove this, suppose not and let $w_0$ be a minimal length positive expression in the letters $\cS$ satisfying the following.
\begin{enumerate}
\item As an element of $G(\cS)$, $w_0$ commutes with $x(\beta_m)$. 
\item One of the letters of $w_0$, say $x(\beta)$, does not commute with $x(\beta_m)$. Equivalently, $\beta,\beta_m$ are not hom-orthogonal (Remark \ref{rem: bm commutes with b iff hom-orthog}).
\end{enumerate}
Clearly, $w_0$ has at least 2 letters and the first and last letter of $w_0$ do not commute with $x(\beta_m)$.

In the group $G(\cS)$ we have the relation
\[
	W=w_0 x(\beta_m) w_0^{-1}x(\beta_m)^{-1}=1.
\]
A proof of the relation $W=1$ gives a partial picture $L$ for $G(\cS)$ having the word $W$ as it boundary. Let $\beta^1,\cdots,\beta^m$ be the letters in $\cS$ in lateral order. Then $\beta_m=\beta^k$ for some $k$. Let $\beta^i,\beta^j$ be the letters which occurs in the partial picture $L$ with $i$ minimal and $j$ maximal. Then $i<j$ and $i\le k\le j$. In particular, either $i< k$ or $k<j$. By symmetry we may assume that $k<j$. Then we will use the Monomorphism Lemma \ref{monomorphism lemma} for $\omega=\beta^j\neq \beta_m$. (For $k=j$ the argument is the same using the dual lemma \ref{dual monomorphism lemma} with $\alpha=\beta^i$.)

There are two cases. Either $\lambda=\beta^j$ is a letter in $W$ or not.

\underline{Case 1}. $\lambda$ is not a letter in $W$. Then the edge set $E(\lambda)$ is a disjoint union of simple closed curves. We claim that these can all be eliminated by Lemmas \ref{monomorphism lemma} and \ref{lem: sliding lemma}. Let $\Sigma$ be any component of $E(\lambda)$. Let $\Sigma'$ be a parallel curve on the negatives side of $\Sigma$. Then $\Sigma'$ crosses only edges $E(\beta)$ where $\beta$ is hom-orthogonal to $\lambda$. Therefore, we can apply the retraction $\rho:G(\cS_-(\lambda))\to G(\cR_-(\lambda))$ to the region enclosed by $\Sigma'$ to eliminate all edges in that region which are not hom-orthogonal to $\lambda$. By the Sliding Lemma \ref{lem: sliding lemma} we can then eliminate $\Sigma$ if it is still there. Repeating this process produces a new partial picture $L'$ with boundary $W$ so that the laterally rightmost letter in $L'$ is a letter in $W$, i.e., we are reduced to Case 2.

\underline{Case 2} $\lambda=\beta^j$ is a letter in $W$. Since $j>k$, $\lambda$ is then a letter in $w_0$. The generator $x(\lambda)$ may occur several times in $w_0$ and $x(\lambda)^{-1}$ occurs in $w_0^{-1}$. Taking the first occurrence of $x(\lambda)$ in $w_0$ we can write $w_0=w_1x(\lambda)w_2$ there $x(\lambda)$ is not a letter in $w_1$. Then
\[
	W=w_1x(\lambda)w_2 x(\beta_m)w_2^{-1}x(\lambda)^{-1}w_1^{-1}x(\beta_m)^{-1}
\]
is the boundary of $L$ which is a partial picture for $G(\cS_-(\lambda))$. Since $\lambda$ is rightmost in later order, $x(\lambda)$ does not occur as a Y-letter at any of the vertices of $L$. Therefore, the edge set $E(\lambda)$ is a disjoint union of simple closed curves and disjoint arcs connecting the $x(\lambda)$ in $w_0$ to the $x(\lambda)^{-1}$ in $w_0^{-1}$. Since these arc are disjoint, the outermost such arc $\Sigma$ connects the first occurrence of $x(\lambda)$ in $w_0$ to the last occurrence of $x(\lambda)^{-1}$ in $w_0^{-1}$. Let $\Sigma'$ be an arc parallel to $\Sigma$ on its negative side. Thus $L=L_0\cup L_1$ where $L_0$ is the portion of $L$ enclosed by $\Sigma'$. Since $x(\lambda)$ is to the left of $x(\lambda)^{-1}$, $\Sigma\subset L_0$. (See the left side of Figure \ref{Fig: proof of Lemma E}.)

\begin{figure}[htbp] % begin {Fig: proof of Lemma E}
\begin{center}
\begin{tikzpicture}[scale=.85]
\begin{scope}[yshift= 3mm]
\coordinate (C) at (2,0);
\coordinate (C1) at (2.4,0);
\draw[color=blue,thick] (C) arc[start angle=0,end angle=180, radius=20mm];
\draw[color=red,dashed] (C1) arc[start angle=0,end angle=180, radius=24mm];
\end{scope}
\draw (-3,0) node[right]{$w_1x(\lambda)$};
\draw (0,0) node{$w_2x(\beta_m)w_2^{-1}$};
\draw (1.1,0) node[right]{$x(\lambda)^{-1}w_1^{-1}x(\beta_m)^{-1}$};
\draw[color=blue] (-1.2,1.4) node{$\Sigma$};
\draw[color=red] (-2,2.2) node{$\Sigma'$};
\draw (0,1) node{$L_0$};
\draw (2.5,2.2) node{$L_1$}; % end left drawing
\draw[thick,->] (4.5,1.4)--(4.9,1.4);
\begin{scope}[xshift=9cm] % right drawing
\draw (0,1) node{$\rho(L_0)$};
\draw (2.5,2.2) node{$L_1$};
\begin{scope}[yshift= 3mm]
\coordinate (C) at (2,0);
\coordinate (C1) at (2.4,0);
\draw[color=red,dashed] (C1) arc[start angle=0,end angle=180, radius=24mm];
\end{scope}
\draw (-3,0) node[right]{$w_1$};
\draw (0,0) node{$\rho\left(w_2\right)x(\beta_m)\rho\left(w_2^{-1}\right)$};
\draw (2.2,0) node[right]{$w_1^{-1}x(\beta_m)^{-1}$};
\draw[color=red] (-2,2.2) node{$\Sigma'$};
\end{scope}
\end{tikzpicture}
\caption{(Proof of Lemma \ref{lem E: beta m only commutes with hom orthogonal roots}) The partial picture $L$ for $G(\cS_-(\lambda))$ is divided into two parts $L=L_0\cup L_1$ by $\Sigma'$. Applying $\rho:G(\cS_-(\lambda))\to G(\cR_-(\lambda))$ to $L_0$ eliminates $x(\lambda)$ from the word $w_0=w_1x(\lambda)x_2$ but does not eliminage $x(\beta_m)$. Then $w_1\rho(w_2)$ commutes with $x(\beta_m)$ contradicting the minimality of $w_0$.}
\label{Fig: proof of Lemma E}
\end{center}
\end{figure}

Using the Monomorphism Lemma \ref{monomorphism lemma}, we apply the retraction $\rho$ to $L_0$. This will eliminate $\Sigma$ and all occurrences of the letter $x(\lambda)$ in $W$ giving a new relation:
\[
	w_1\rho(w_2) \rho(x(\beta_m)) \rho(w_2)^{-1}w_1^{-1} x(\beta_m)^{-1} =1
\]
or, equivalently, $w_1\rho(w_2) \rho(x(\beta_m))=x(\beta_m)w_1\rho(w_2) $.
By Lemma \ref{lem C: beta m occurs a fixed number of times} proved above, $x(\beta_m)$ occurs the same number of times in these two expressions. So, $\rho(x(\beta_m))=x(\beta_m)$. In particular, $\lambda$ is hom-orthogonal to $\beta_m$. Equivalently $x(\lambda)$ commutes with $x(\beta_m)$. So, $x(\lambda)$ is not the first letter of $w_0$ which means $w_1$ is a nontrivial word.

This gives a new word $w_0'=w_1 \rho(w_2)$ which is shorter than $w_0$, commutes with $x(\beta_m)$ and  has at least one letter (the first letter of $w_1$) which does not commute with $x(\beta_m)$. This contradicts the minimality of $w_0$ and completes the proof of Lemma \ref{lem E: beta m only commutes with hom orthogonal roots}.\end{proof}

\section{Appendix}\label{ss4}
This Appendix contains basic background material for this paper. Details can be found in \cite{Modulated} and \cite{Linearity1}

\subsection{Exceptional representations of modulated quivers}
We assume throughout the paper that $Q$ is a quiver without loops, oriented cycles or multiple edges $i\to j$ (since multiplicity of edges is included in the valuation). We recall briefly that a \und{valuation} on a quiver $Q$ is given by assigning positive integers $f_i$ to each vertex $i$ and pairs of positive integers $(d_{ij},d_{ji})$ to every arrow $i\to j$ in $Q$ having the property that $f_id_{ij}=f_jd_{ji}$. For example, the Kronecker quiver is $\bullet\xrightarrow{(2,2)} \bullet$. A \und{$K$-modulation} of a valued quiver is given by assigning a division algebra $F_i$ of dimension $f_i$ at each vertex and an $F_i\text-F_j$-bimodule $M_{ij}$ on each arrow $i\to j$ with $\dim_KM_{ij}=f_id_{ij}=f_jd_{ji}$. A representation of a modulated quiver consists of a right $F_i$-vector space $V_i$ at each vertex and an $F_j$-linear map $V_i\otimes M_{ij}\to V_j$ on each arrow $i\to j$. A representation $V$ is called a \und{brick} if its endomorphism ring is a division algebra. An \und{exceptional} module is a brick having no self-extensions. For hereditary algebras of finite type, all bricks are exceptional.

Given any module $X$ we denote by $X^\perp$ the full subcategory of $mod\text-\Lambda$ with all objects $Y$ so that 
\[
	\Hom_\Lambda(X,Y)=0=\Ext_\Lambda(X,Y)
\]
Similarly, $^\perp X$ is the category of all $\Lambda$-modules $Y$ so that $X\in Y^\perp$. An \und{exceptional sequence} of length $k$ is defined to be a sequence of exceptional modules $E_1,E_2,\cdots,E_k$ so that $E_i\in E_j^\perp$ for all $i<j$.

The \und{dimension vector} $\undim V$ of a representation of a modulated quiver is defined to be $(d_1,d_2,\cdots,d_n)$ where $d_i$ is the dimension of $V_i$ as a vector space over $F_i$. A \und{real Schur root} of the valued quiver $Q$ is defined to be the dimension vector of an exceptional module for any modulation of $Q$. This concept is known to be independent of the choice of modulation. See \cite{Modulated} for details. In this paper we assume a modulation is given.

The \und{semi-stability set} $D(V)$ of any module $V$ is defined by
\[
	D(V):=\{x\in\RR^n\,:\, \brk{x,\undim V}=0\text{ and } \brk{x,\undim V'}\le 0\text{ for all submodules } V'\subset V\}
\]where we use the bilinear pairing:
\[
	\brk{x,y}=\sum x_iy_if_i.
\]

For any real Schur root $\beta$ let $D(\beta)=D(M_\beta)$ where $M_\beta$ is the unique exceptional module with dimension vector $\beta$. In this paper we use the following refinement of the definition of $D(\beta)$ which is essentially proved in \cite{Modulated}.

\begin{thm}\label{thm: equivalent definitions of D(b)}
For $\beta$ a real Schur root and $x\in \RR^n$ so that $\left<x,\beta\right>=0$, the following are equivalent.
\begin{enumerate}
\item $\brk{x,\beta'}\le 0$ for all real Schur subroots $\beta'$ of $\beta$.
\item $\brk{x,\undim V'}\le0$ for all submodules $V\subseteq M_\beta$.
\item $\brk{x,\undim V''}\ge0$ for all quotient modules $V''$ of $M_\beta$.
\item $\brk{x,\beta''}\ge0$ for all real Schur quotient roots of $\beta$.
\end{enumerate}
\end{thm}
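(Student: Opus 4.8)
The plan is to introduce the linear functional $\theta(V):=\brk{x,\undim V}$, which is additive on short exact sequences, and to record that the standing hypothesis $\brk{x,\beta}=0$ says precisely $\theta(M_\beta)=0$. In this language conditions (2) and (3) read ``$\theta(V')\le 0$ for every submodule $V'\subseteq M_\beta$'' and ``$\theta(V'')\ge 0$ for every quotient $V''$ of $M_\beta$''. The equivalence $(2)\Leftrightarrow(3)$ is then immediate: every submodule $V'$ sits in $0\to V'\to M_\beta\to V''\to 0$ with $V''=M_\beta/V'$, so additivity and $\theta(M_\beta)=0$ give $\theta(V')=-\theta(V'')$, and $V'\mapsto M_\beta/V'$ is a bijection between submodules and quotients. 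I would also note the two trivial implications $(2)\Rightarrow(1)$ and $(3)\Rightarrow(4)$, which hold because every real Schur subroot (resp. quotient root) is by definition the dimension vector of a submodule (resp. quotient) of $M_\beta$. Thus the entire theorem reduces to the single nontrivial implication $(1)\Rightarrow(2)$, from which $(4)\Rightarrow(3)$ will follow by the duality in the last paragraph.

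For $(1)\Rightarrow(2)$ I would argue by contrapositive: assuming $M_\beta$ is not $x$-semistable, I must produce a real Schur subroot $\beta'$ with $\theta(\beta')>0$. Let $W\subseteq M_\beta$ be the maximal destabilizing subobject for the slope $\mu=\theta/\dim$, i.e. the canonical term of the Harder--Narasimhan filtration that is $\mu$-semistable of maximal slope $\mu_1>0$; then $\theta(W)>0$. Writing $W=\bigoplus W_i$ into indecomposables, each $W_i$ is again a submodule of $M_\beta$ with $\theta(W)=\sum\theta(W_i)$, so some $\theta(W_i)>0$; hence it suffices to show each $W_i$ is \emph{exceptional}, for then $\undim W_i$ is a real Schur subroot violating (1). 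Equivalently I must show the extremal destabilizer $W$ is rigid, $\Ext(W,W)=0$ (an indecomposable rigid summand in a hereditary category is automatically a brick, hence exceptional). Here the rigidity of $M_\beta$ is the key input: since $W$ is the canonical HN term it is preserved by every endomorphism of $M_\beta$, and since it is semistable of strictly maximal slope one has $\Hom(W,N)=0$ where $N=M_\beta/W$. Chasing the long exact sequences for $\Hom(M_\beta,-)$ and $\Hom(-,W)$ applied to $0\to W\to M_\beta\to N\to 0$, and using $\Ext(M_\beta,M_\beta)=0$, one identifies $\Ext(W,W)$ as a quotient of $\coker\big(\End(M_\beta)\to\End(N)\big)$. \textbf{The surjectivity of $\End(M_\beta)\to\End(N)$ --- equivalently, the rigidity of the maximal destabilizing subobject of an exceptional module over a hereditary algebra --- is the main obstacle.} This is the Schofield-type statement that the relevant extremal subrepresentation of a real Schur root is again a real Schur root, and is exactly the content ``essentially proved in \cite{Modulated}''; I would either cite it or reprove it using the wide-subcategory machinery (Theorem \ref{thm: W(x0) is wide}, Proposition \ref{prop: exceptional sequences are lin indep}), which controls the simple objects of the category of $x$-semistable modules.

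Finally, to get $(4)\Rightarrow(3)$, and hence $(3)\Leftrightarrow(4)$, I would invoke the $k$-duality $D\colon\mathrm{mod}\text-\Lambda\to\mathrm{mod}\text-\Lambda\op$. It is exact and contravariant, preserves dimension vectors, exchanges submodules of $M_\beta$ with quotients of $DM_\beta$, and carries exceptional modules to exceptional modules, so it matches real Schur quotient roots of $\beta$ over $\Lambda$ with real Schur subroots of the corresponding root over $\Lambda\op$. Since the pairing $\brk{x,y}=\sum x_iy_if_i$ uses the same data $f_i$ for $\Lambda$ and $\Lambda\op$ and $\undim DV=\undim V$, replacing $x$ by $-x$ converts the inequalities of (4) over $\Lambda$ into those of (1) over $\Lambda\op$. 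Applying the already-established implication $(1)\Rightarrow(2)$ over $\Lambda\op$ with the weight $-x$ and dualizing back then yields precisely $(4)\Rightarrow(3)$ over $\Lambda$. Combining this with $(2)\Leftrightarrow(3)$, the trivial inclusions $(2)\Rightarrow(1)$ and $(3)\Rightarrow(4)$, and the crux $(1)\Rightarrow(2)$ closes the cycle and gives the equivalence of all four conditions.
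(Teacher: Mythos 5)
Your handling of the easy parts matches the paper: $(2)\Leftrightarrow(3)$ is additivity of $\brk{x,\undim(-)}$ on short exact sequences together with $\brk{x,\beta}=0$, and your reduction of $(4)\Rightarrow(3)$ to the implication $(1)\Rightarrow(2)$ over $\Lambda\op$ with the weight $-x$ is exactly the duality argument the paper gives. The gap is at the crux $(1)\Rightarrow(2)$. The paper does not prove this implication at all: it cites \cite{Modulated} for the equivalence when $x\in\ZZ^n$, then upgrades to $\QQ^n$ by homogeneity and to $\RR^n$ by taking closures. You instead run a Harder--Narasimhan argument, which stands or falls with the claim you yourself flag as the main obstacle: that the maximal destabilizing submodule $W\subseteq M_\beta$ is rigid (equivalently, in your exact-sequence chase, that $\End(M_\beta)\to\End(N)$ is surjective for $N=M_\beta/W$). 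This claim is never proved in your proposal, and it is not what \cite{Modulated} supplies; the available citation --- the one the paper uses --- is the integral case of $(1)\Leftrightarrow(2)$ itself. Falling back on that citation makes your HN apparatus circular and superfluous, and it would still leave you needing the integral-to-real extension (scaling plus closure), which you never address because the HN route was supposed to treat real $x$ directly.

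The rigidity claim is a genuine missing idea, not a deferred routine verification. Exceptional modules do contain bricks with self-extensions as submodules: for an acyclic quiver of type $\tilde A_2$ (vertices $1,2,3$, arrows $1\to 2$, $2\to 3$, $1\to 3$), the injective exceptional module of dimension $(2,1,1)$ contains a one-parameter family of bricks of dimension $(1,1,1)$, and no module of dimension $(1,1,1)$ is rigid since the Tits form vanishes there. So ``indecomposable summands of the first HN term are exceptional'' requires ruling out precisely such bricks as destabilizers, and nothing in your argument does this. Your proposed mechanism is also shaky on its own terms: $\End(M_\beta)$ is a division ring while $N$ may be decomposable with a large endomorphism ring, so surjectivity of $\End(M_\beta)\to\End(N)$ is far from clear; and if it fails, your chase only exhibits $\Ext(W,W)$ as a quotient of a nonzero cokernel, which decides nothing. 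To close the gap you must either genuinely prove this Schofield-type statement (the wide-subcategory results you point to, Theorem \ref{thm: W(x0) is wide} and Proposition \ref{prop: exceptional sequences are lin indep}, do not by themselves control the HN term), or else do what the paper does: cite \cite{Modulated} for integral $x$ and supply the scaling and closure argument for $\QQ^n$ and $\RR^n$.
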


\begin{proof}
It is shown in \cite{Modulated} that (1) is equivalent to (2) for $x\in \ZZ^n$. This easily implies that (1) and (2) are equivalent for $x\in \QQ^n$. Taking the closure we get that (1) and (2) are equivalent for all $x\in\RR^n$. 

The equivalence $(2)\Leftrightarrow (3)$ is obvious. The equivalence $(3)\Leftrightarrow(4)$ follows from the equivalence $(1)\Leftrightarrow(2)$. Indeed, applying the duality functor $D=\Hom(-,K)$, the exceptional $\Lambda$-module $M_\beta$ and quotient module $M_{\beta''}$ become exceptional $D\Lambda$ modules with the same dimension vectors, but $DM_{\beta''}\subset DM_\beta$. So, $x\in \RR^n$ satisfies (4) for $\Lambda$ if and only if $\brk{-x,\beta''}\le 0$ for $\beta''\subset \beta$ (as $D\Lambda$-roots). Equivalently, $x\in D_\Lambda(\beta)$ using the criteria (1),(2) if and only if $-x\in D_{D\Lambda}(\beta)$ using the quotient root criteria (4),(3) respectively. So $(3)\Leftrightarrow(4)$. 
\end{proof}

Following \cite{Linearity1}, we use $g$-vectors and modified dot product in this paper instead of the Euler product used in \cite{Modulated}. and we define the \und{$g$-vector} of a module $X$ to be
\[
	g(X):=\undim P_0/rad\,P_0-\undim P_1/rad\,P_1
\]
where 
\[
	0\to P_1\to P_0\to X\to 0
\]
is the minimal projective presentation of $X$. Equivalently, $g(X)=C_\Lambda^{-1}\undim X$ where $C_\Lambda$ is the Cartan matrix of $\Lambda$. 

\begin{lem}\label{lem: Euler pairing using g-vectors}
The $g$-vector of $X$ satisfies the following for any representation $V$.
\[
	\brk{g(X),\undim V}=\dim_K\Hom_\Lambda(X,V)-\dim_K\Ext_\Lambda(X,V).
\]
In particular, $\brk{g(X),\undim V}=0$ when $X\in\,^\perp V$.
\end{lem}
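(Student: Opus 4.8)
The plan is to reduce the identity to the case of projective modules, where both sides are transparent, using only the minimal projective presentation and the hereditary hypothesis. First I would apply $\Hom_\Lambda(-,V)$ to the presentation $0\to P_1\to P_0\to X\to 0$. Since $\Lambda$ is hereditary, $\Ext_\Lambda(P_j,V)=0$ for the projective modules $P_j$, so this yields the four-term exact sequence
\[
0\to \Hom_\Lambda(X,V)\to \Hom_\Lambda(P_0,V)\to \Hom_\Lambda(P_1,V)\to \Ext_\Lambda(X,V)\to 0.
\]
Taking the alternating sum of $K$-dimensions gives
\[
\dim_K\Hom_\Lambda(X,V)-\dim_K\Ext_\Lambda(X,V)=\dim_K\Hom_\Lambda(P_0,V)-\dim_K\Hom_\Lambda(P_1,V).
\]
On the other side, additivity of $\undim$ along the presentation gives $\undim X=\undim P_0-\undim P_1$, hence $g(X)=C_\Lambda^{-1}\undim X=g(P_0)-g(P_1)$ by linearity of $C_\Lambda^{-1}$. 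Thus both members of the asserted equality are the difference of the corresponding quantities for $P_0$ and $P_1$ (noting $\Ext_\Lambda(P_j,V)=0$), and it suffices to prove the lemma when $X$ is projective.

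Both sides are additive over direct sums, so I would further reduce to the case $X=P_i$, the indecomposable projective cover of the simple $S_i$. Here the computation is direct: the minimal projective presentation of $P_i$ is $0\to 0\to P_i\to P_i\to 0$, so $g(P_i)=\undim(P_i/rad\,P_i)=\undim S_i=e_i$, the $i$th standard basis vector. Consequently
\[
\brk{g(P_i),\undim V}=\brk{e_i,\undim V}=f_i\,(\undim V)_i=f_i\dim_{F_i}V_i=\dim_K V_i,
\]
using the definition $\brk{x,y}=\sum x_iy_if_i$ together with $(\undim V)_i=\dim_{F_i}V_i$ and $\dim_K F_i=f_i$. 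On the right-hand side, $\Ext_\Lambda(P_i,V)=0$ and the standard isomorphism $\Hom_\Lambda(P_i,V)\cong V_i$ of right $F_i$-modules gives $\dim_K\Hom_\Lambda(P_i,V)=\dim_K V_i$. The two quantities agree, so the lemma holds for $X=P_i$, hence for all projectives, and hence in general.

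The only delicate point is the bookkeeping between $F_i$-dimensions and $K$-dimensions in the modulated setting; this is where I expect the reader to need care rather than in any homological step. The weights $f_i=\dim_K F_i$ built into the pairing $\brk{\ ,\ }$ are exactly what makes $\brk{e_i,\undim V}$ record $\dim_K V_i$ rather than $\dim_{F_i}V_i$, which is precisely what $\Hom_\Lambda(P_i,V)$ measures. Finally, the ``in particular'' clause is immediate: if $X\in{}^\perp V$ then $\Hom_\Lambda(X,V)=0=\Ext_\Lambda(X,V)$, so the right-hand side vanishes and therefore $\brk{g(X),\undim V}=0$.
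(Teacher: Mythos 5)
Your proposal is correct and follows essentially the same route as the paper: apply $\Hom_\Lambda(-,V)$ to the minimal projective presentation to obtain the four-term exact sequence, then settle the identity for projectives. The only difference is one of detail — the paper dismisses $\dim_K\Hom_\Lambda(P,V)=\brk{g(P),\undim V}$ as ``evident,'' while you verify it by reducing to the indecomposable projectives $P_i$ and tracking the $F_i$-versus-$K$ dimension bookkeeping, which is a careful spelling-out of the same fact rather than a different argument.
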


\begin{proof}
This follows from the exact sequence:
\[
0\to \Hom_\Lambda(X,V)\to \Hom_\Lambda(P_0,V) \to \Hom_\Lambda(P_1,V)\to \Ext_\Lambda(X,V)\to 0
\]
and the evident fact that $\dim_K\Hom_\Lambda(P,V)=\brk{g(P),\undim V}$.
\end{proof}

This immediately gives the following.

\begin{prop}\label{prop: exceptional sequences are lin indep}
The dimension vectors of modules in an exceptional sequence are linearly independent.
\end{prop}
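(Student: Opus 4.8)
The plan is to exploit the Euler pairing of Lemma \ref{lem: Euler pairing using g-vectors} to produce an invertible triangular matrix relating the $g$-vectors $g(E_j)$ to the dimension vectors $\undim E_j$. Write $E_1,\dots,E_k$ for the exceptional sequence, so that $E_i\in E_j^\perp$ whenever $i<j$; by the definition of $E_j^\perp$ recalled in the Appendix this means $\Hom_\Lambda(E_j,E_i)=0=\Ext_\Lambda(E_j,E_i)$ for all $i<j$.

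First I would form the $k\times k$ matrix $A$ with entries $A_{ji}=\brk{g(E_j),\undim E_i}$. By Lemma \ref{lem: Euler pairing using g-vectors} we have $A_{ji}=\dim_K\Hom_\Lambda(E_j,E_i)-\dim_K\Ext_\Lambda(E_j,E_i)$. For $i<j$ the exceptional-sequence condition forces $A_{ji}=0$. On the diagonal, each $E_j$ is exceptional, hence a brick with no self-extensions, so $\Ext_\Lambda(E_j,E_j)=0$ and $\End_\Lambda(E_j)$ is a division algebra; therefore $A_{jj}=\dim_K\End_\Lambda(E_j)>0$. Thus $A$ is triangular, with all entries below the diagonal equal to zero and with strictly positive diagonal entries, so $\det A=\prod_j A_{jj}\neq 0$ and $A$ is invertible.

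Finally I would feed a linear dependence into this structure. Suppose $\sum_i c_i\,\undim E_i=0$ with $c_i\in\RR$. Pairing on the left against each $g(E_j)$ and using bilinearity gives $0=\brk{g(E_j),\sum_i c_i\,\undim E_i}=\sum_i c_i A_{ji}$ for every $j$, i.e. $A\mathbf c=0$ where $\mathbf c=(c_i)$. Since $A$ is invertible, $\mathbf c=0$, which proves that the dimension vectors $\undim E_1,\dots,\undim E_k$ are linearly independent.

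I do not expect a serious obstacle here: the argument is short once Lemma \ref{lem: Euler pairing using g-vectors} is in hand. The only points requiring care are getting the orientation of the triangularity correct (the vanishing $A_{ji}=0$ occurs precisely for $i<j$, placing the zeros below the diagonal) and justifying the positivity of the diagonal, which rests on the definition of an exceptional module as a brick with $\Ext_\Lambda(E_j,E_j)=0$; these guarantee $A_{jj}=\dim_K\End_\Lambda(E_j)>0$ rather than merely nonzero.
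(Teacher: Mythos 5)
Your proof is correct and is essentially the paper's own argument: both rest on Lemma \ref{lem: Euler pairing using g-vectors}, the vanishing $\brk{g(E_j),\undim E_i}=0$ for $i<j$, and the nonvanishing of $\brk{g(E_j),\undim E_j}=\dim_K\End_\Lambda(E_j)$. Packaging this as an invertible triangular matrix rather than saying ``$\undim E_j$ is not a linear combination of the earlier $\undim E_i$'' is only a cosmetic difference.
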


\begin{proof}
Suppose that $E_1,\cdots,E_k$ is an exceptional sequence. Lemma \ref{lem: Euler pairing using g-vectors} implies
\[
	\brk{g(E_j),\undim E_i}=0
\]
for all $i<j$. But $\brk{g(E_j),\undim E_j}=\dim_K \End_\Lambda(E_j)\neq0$. So, $\undim E_j$ cannot be a linear combination of $\undim E_i$ for $I<j$.
\end{proof}

The $g$-vector of a shifted projective module $P[1]$ is define by $g(P[1]):=-g(P)$.

We have the following ``Virtual Stability Theorem'' from \cite{Modulated}.

\begin{thm}\label{thm: characterization of perp M}
If $X\in\,^\perp M_\beta$ then $g(X)\in D(\beta)$. If $P\in\,^\perp M_\beta$ is projective then $g(P[1])=-g(P)\in D(\beta)$. Conversely, for any $x\in D(\beta)\cap \ZZ^n$ there is a module $X$ and a projective module $P$ so that
\begin{enumerate}
\item $x=g(X\oplus P[1])=g(X)-g(P)$.
\item $X,P\in\,^\perp M_\beta$, i.e., $\Hom(X\oplus P,M_\beta)=0=\Ext(X,M_\beta)$.
\end{enumerate}
\end{thm}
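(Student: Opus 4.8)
The statement splits into a direct half (the two forward assertions) and a harder converse, and I would treat them separately.

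\textbf{Forward direction.} The plan is to read both forward assertions straight off Lemma \ref{lem: Euler pairing using g-vectors}. For a module $X\in\,{}^\perp M_\beta$ we have $\Hom_\Lambda(X,M_\beta)=0=\Ext_\Lambda(X,M_\beta)$, so the lemma gives $\brk{g(X),\beta}=0$, placing $g(X)$ in the hyperplane $H(\beta)$. For any submodule $V'\subseteq M_\beta$, left-exactness of $\Hom_\Lambda(X,-)$ yields an injection $\Hom_\Lambda(X,V')\hookrightarrow\Hom_\Lambda(X,M_\beta)=0$, whence $\brk{g(X),\undim V'}=-\dim_K\Ext_\Lambda(X,V')\le0$; by the submodule description of $D(\beta)=D(M_\beta)$ this gives $g(X)\in D(\beta)$. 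For a $\Lambda$-projective $P\in\,{}^\perp M_\beta$ the same pairing gives $\brk{-g(P),\beta}=0$, while projectivity over the hereditary algebra $\Lambda$ forces $\Ext_\Lambda(P,V')=0$, so $\brk{-g(P),\undim V'}=-\dim_K\Hom_\Lambda(P,V')\le0$ and $-g(P)\in D(\beta)$. Since $D(\beta)$ is cut out of $\RR^n$ by one linear equation and a family of homogeneous linear inequalities, it is a convex cone, so $g(X)-g(P)=g(X)+(-g(P))\in D(\beta)$ automatically.

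\textbf{Converse, reduction to $K_0$.} For the converse I would first use that $g=C_\Lambda^{-1}(\,\cdot\,)$ is $\ZZ$-linear in the dimension vector, hence an isomorphism $K_0(mod\text-\Lambda)\cong\ZZ^n$, so $g(X)-g(P)$ depends only on the class $[X]-[P]\in K_0$. The task becomes: realize every $x\in D(\beta)\cap\ZZ^n$ as $g$ of such a class. The natural home for these classes is the perpendicular category $\cW={}^\perp M_\beta$, which — since $M_\beta$ is exceptional and $\Lambda$ is hereditary — is an exact abelian wide subcategory of rank $n-1$, equivalent to $mod\text-\Lambda'$ for a hereditary $\Lambda'$. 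By the forward direction $g(K_0(\cW))\subseteq H(\beta)$, and one checks it is the full saturated sublattice $\ZZ^n\cap H(\beta)$ (rank $n-1$ by Proposition \ref{prop: exceptional sequences are lin indep} applied to the simples of $\cW$, saturation being the standard property of a perpendicular category to an exceptional object). Thus $x$ pulls back to a unique class $t\in K_0(\cW)$, and I must write $t=[X]-[P]$ with $X$ a module of $\cW$ and $P$ a $\Lambda$-projective lying in $\cW$.

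\textbf{Converse, main obstacle.} The crux, and the step I expect to be hardest, is that the relative projectives of $\cW$ — which freely generate its split Grothendieck group and would solve $t=[X]-[P]$ trivially by separating signs — are in general \emph{not} $\Lambda$-projective, whereas the theorem demands an honest $\Lambda$-projective $P$. I would argue by induction on $\dim_K M_\beta$. The inequalities $\brk{x,\beta'}\le0$ for subroots $\beta'\subset\beta$ confine $t$ to exactly the subcone spanned by $g$-vectors of $\cW$-modules together with the negatives of $g$-vectors of those indecomposable $\Lambda$-projectives $P_i$ that happen to lie in $\cW$; so the plan is to express the ``module part'' of $t$ by a $\cW$-module $X$ and absorb the negative directions using the $\Lambda$-projectives available in $\cW$. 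When an indecomposable $\Lambda$-projective that is needed fails to lie in $\cW$, I would trade it via its minimal $\cW$-approximation (a two-term complex of $\cW$-objects), using the hereditary hypothesis (projective dimension $\le1$) and the freedom in $X$ to convert it into genuine $\cW$-data. Verifying that this procedure terminates and lands exactly on $D(\beta)\cap\ZZ^n$ — neither overshooting the subroot inequalities nor missing lattice points — is the main technical content, and it is here that I would invoke the equivalence $\cW\cong mod\text-\Lambda'$ together with the quotient-root description of $D(\beta)$ from Theorem \ref{thm: equivalent definitions of D(b)}, so as to control the sub-side and quotient-side inequalities simultaneously.
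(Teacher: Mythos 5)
Your forward half is correct and complete: both assertions follow, exactly as you say, from Lemma \ref{lem: Euler pairing using g-vectors}, left-exactness of $\Hom_\Lambda(X,-)$ applied to submodules $V'\subseteq M_\beta$, and (for the projective case) $\Ext_\Lambda(P,-)=0$; the convex-cone remark is also fine. Note for calibration that the paper does not prove this theorem at all --- it is quoted from \cite{Modulated} --- and the forward half is the easy part.

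The converse, however, has a genuine gap. Your reduction to $K_0$ is sound (completing $E_1,\dots,E_{n-1}$, the minimal objects of $\cW={}^\perp M_\beta$, by $M_\beta$ to a $\ZZ$-basis of $K_0$ does show $g(K_0(\cW))=H(\beta)\cap\ZZ^n$), but the crux is never established. Since every object of $\cW$ has a filtration by the $E_i$, the set of classes $[X]-[P]$ with $X\in\cW$ and $P$ a $\Lambda$-projective in $\cW$ is exactly the $\NN$-span of the $[E_i]$ minus the $\NN$-span of the classes of indecomposable $\Lambda$-projectives lying in $\cW$; so your sentence ``the inequalities $\brk{x,\beta'}\le0$ confine $t$ to exactly the subcone spanned by $g$-vectors of $\cW$-modules together with the negatives of $g$-vectors of the $\Lambda$-projectives in $\cW$'' is not an observation you may invoke --- it \emph{is} the theorem, restated in $K_0$ language, and using it is circular. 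The subsequent repair step (``trade a missing projective for its minimal $\cW$-approximation'') is never defined as an operation on classes, no inductive hypothesis on $\dim_K M_\beta$ is formulated, and no quantity is shown to decrease, so the induction does not get off the ground; and even granting the real-cone equality, integer points of a cone need not be $\NN$-combinations of prescribed lattice generators, so a further lattice-point argument would still be owed. What is actually needed is the geometric input used in \cite{Modulated}: realize $x\in\ZZ^n$ as the $g$-vector of a generic two-term complex of projectives $P_1\to P_0$, show the generic such complex decomposes as $X\oplus P[1]$, and prove via (determinantal) semi-invariants that the stability inequalities defining $D(\beta)$ are equivalent to $\Hom(X\oplus P,M_\beta)=0=\Ext(X,M_\beta)$. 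That input cannot be recovered from $K_0$ bookkeeping alone, which is why your sketch stalls precisely at the step you yourself flag as ``the main technical content.''
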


\subsection{Wide subcategories}

Recall that a full subcategory $\cW$ of an abelian category $\cA$ is \und{wide} if it is closed under extension and kernels and cokernels of morphism between objects. This implies in particular that $\cW$ is closed under taking direct summands.

Returning to the case of $mod\text-\Lambda$ for a hereditary algebra $\Lambda$, we note that $X^\perp$ is a wide subcategories for any object $X$. To see this, look at the following six term exact sequence for any short exact sequence $0\to A\to B\to C\to 0$.
\[
	0\to\Hom(X,A)\to \Hom(X,B)\to \Hom(X,C)\to \Ext(X,A)\to\Ext(X,B)\to\Ext(X,C)\to0
\]If $A,C\in X^\perp$ then we see that $B\in X^\perp$. If $B\in X^\perp$ then $\Hom(X,A)=0=\Ext(X,C)$. So, any object which is both a subobject and quotient object of an object of $X^\perp$ is also in $X^\perp$. So, $X^\perp$ is a wide subcategory of $mod\text-\Lambda$. Similarly, $^\perp X$ is a wide subcategory.

Closely related to this example is the following well-known fact. (See \cite{Linearity1} for a short proof.)

\begin{thm}\label{thm: W(x0) is wide}
Let $R$ be any subset of $\RR^n$. Then the set $\cW(R)$ of all representation $V$ so that $R\subset D(V)$ is a wide subcategory of $mod\text-\Lambda$.
\end{thm}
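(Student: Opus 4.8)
The plan is to reduce to a single point of $R$ and then run the standard additivity argument for semistability. First I would observe that $\cW(R)=\bigcap_{x\in R}\cW(\{x\})$, since $R\subset D(V)$ means exactly that $x\in D(V)$ for every $x\in R$. Next I would note that an arbitrary intersection of wide subcategories is again wide: kernels, cokernels and extensions are all computed in the ambient category $mod\text-\Lambda$, so if an object is produced by one of these operations from objects lying in every member of a family of wide subcategories, it again lies in every member, hence in the intersection (fullness is also preserved by intersection). Thus it suffices to prove the theorem for $R=\{x\}$ a single point; write $\cW(x):=\cW(\{x\})$.

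Fixing $x$, I would introduce the functional $\mu(V):=\brk{x,\undim V}$. Because $\undim$ is additive on short exact sequences and $\brk{x,-}$ is linear, $\mu$ is additive: $\mu(B)=\mu(A)+\mu(C)$ whenever $0\to A\to B\to C\to 0$ is exact. By the definition of $D(V)$, we have $V\in\cW(x)$ precisely when $\mu(V)=0$ and $\mu(V')\le 0$ for every submodule $V'\subseteq V$. Whenever $\mu(V)=0$, this submodule condition is equivalent to the quotient condition $\mu(V'')\ge 0$ for every quotient $V''$ of $V$, since the sub- and quotient-values in any short exact sequence sum to $\mu(V)=0$ (this is the general-module analogue of the equivalence in Theorem \ref{thm: equivalent definitions of D(b)}). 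I would keep both formulations available for the closure arguments.

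Finally I would verify the three closure axioms. For an extension $0\to A\to B\to C\to 0$ with $A,C\in\cW(x)$, additivity gives $\mu(B)=0$, and for any submodule $B'\subseteq B$ the sequence $0\to B'\cap A\to B'\to B'/(B'\cap A)\to 0$ exhibits $\mu(B')$ as a sum of $\mu$-values of a submodule of $A$ and a submodule of $C$, both $\le 0$, so $B\in\cW(x)$. For a morphism $f\colon B\to B'$ between objects of $\cW(x)$, with kernel $K$, image $I$ and cokernel $C$, the crux is that $I\in\cW(x)$: as a quotient of $B$ it satisfies $\mu(I)\ge 0$, and as a submodule of $B'$ it satisfies $\mu(I)\le 0$, forcing $\mu(I)=0$; additivity then gives $\mu(K)=0=\mu(C)$. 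The sub-inequalities are automatic, since every submodule of $K$ is a submodule of $B$, while every submodule of $C=B'/I$ has the form $B''/I$ for $I\subseteq B''\subseteq B'$ with $\mu(B''/I)=\mu(B'')\le 0$ because $\mu(I)=0$. Hence $K,C\in\cW(x)$. I expect no genuine obstacle here: once additivity of $\mu$ and the reduction to a single point are established, the argument is purely formal, the only substantive observation being that the image of a map between semistable modules, sitting simultaneously as a quotient and a submodule, is again semistable.
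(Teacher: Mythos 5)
Your proof is correct, and it is worth noting that the paper itself contains no proof of this statement: Theorem \ref{thm: W(x0) is wide} is quoted as a ``well-known fact'' with the proof deferred to the reference \cite{Linearity1}. Your argument is the standard one and is complete as written. The reduction $\cW(R)=\bigcap_{x\in R}\cW(\{x\})$ together with the stability of wideness under arbitrary intersections is sound, and the single-point case is handled by exactly the right mechanism: additivity of $\mu(V)=\brk{x,\undim V}$ on short exact sequences, the slope-zero equivalence between the submodule condition $\mu(V')\le 0$ and the quotient condition $\mu(V'')\ge 0$ (which, as you correctly observe, is elementary for arbitrary modules and does not require the paper's Theorem \ref{thm: equivalent definitions of D(b)}, whose content is the subtler statement about real Schur subroots), the second-isomorphism-theorem splitting $0\to B'\cap A\to B'\to B'/(B'\cap A)\to 0$ for extensions, and the key observation that the image $I=\im f$ of a morphism between objects of $\cW(x)$ satisfies $\mu(I)\ge 0$ as a quotient of $B$ and $\mu(I)\le 0$ as a submodule of $B'$, forcing $\mu(I)=0$. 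The only micro-elision is that $I$ itself satisfies the submodule inequalities (immediate, since submodules of $I$ are submodules of $B'$), but your proof never actually needs more than $\mu(I)=0$, so nothing is missing. In short, your proposal supplies a self-contained proof of a statement the paper outsources, and it is the same ``semistables of slope zero form an abelian/wide subcategory'' argument that the cited reference gives.
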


Consider the case when $R=\{x_0\}$ is a single point $x_0\neq0\in \RR^n$. Suppose that $\cS$ is an admissible set of real Schur roots. Recall our notation that $D(\beta)=D(M_{\beta})$ where $M_{\beta}$ is the unique exceptional module with dimension vector $\beta$.

What can we say about the set of $\beta \in \cS$ so that $x_0\in D(\beta)$?

\begin{prop}\label{prop: x0 in interior of D(a)}
Let $\alpha\in \cS$. Then $x_0$ is in the interior of $D(\alpha)$ if and only if $M_{\alpha}$ is a minimal object of the wide subcategory $\cW(x_0)$.
\end{prop}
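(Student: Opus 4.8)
The plan is to translate both sides of the equivalence into conditions on the defining inequalities of $D(\alpha)$. By definition of $\cW(x_0)$ and of $D(\alpha)=D(M_\alpha)$, the statement $x_0\in D(\alpha)$ is literally the statement $M_\alpha\in\cW(x_0)$, so I assume this from the start. The first step I would record is a clean criterion for submodules: for any submodule $W\subseteq M_\alpha$, one has $W\in\cW(x_0)$ if and only if $\brk{x_0,\undim W}=0$. The point is that the remaining defining conditions $\brk{x_0,\undim W'}\le 0$ for submodules $W'\subseteq W$ are automatic, because each such $W'$ is also a submodule of $M_\alpha$ and $x_0\in D(\alpha)$. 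Consequently, $M_\alpha$ is \emph{not} minimal in $\cW(x_0)$ if and only if there is a proper nonzero submodule $W\subsetneq M_\alpha$ with $\brk{x_0,\undim W}=0$.

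Next I would identify the relative interior of $D(\alpha)$ inside the hyperplane $H(\alpha)=\{x:\brk{x,\alpha}=0\}$. The set $D(\alpha)$ is the polyhedral cone in $H(\alpha)$ cut out by the constraints $\brk{x,\undim W}\le0$, $W\subseteq M_\alpha$. By Theorem \ref{thm: characterization of perp M} its lattice points are the $g$-vectors of the rank $n-1$ wide subcategory ${}^\perp M_\alpha$, so $D(\alpha)$ is full dimensional in $H(\alpha)$. Hence its relative interior is the locus where every constraint that is not an implicit equality holds strictly, and a constraint $\brk{x,\undim W}\le 0$ is an implicit equality (tight throughout $D(\alpha)$) exactly when $\undim W\in\RR\alpha$, since the weighted pairing is nondegenerate and $H(\alpha)^{\perp}=\RR\alpha$. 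Using that $\alpha$ is an indivisible real root, no proper nonzero submodule $W\subsetneq M_\alpha$ can have $\undim W$ proportional to $\alpha$, so none of their constraints is an implicit equality. This yields the description
\[
	x_0\in\interior D(\alpha)\iff \brk{x_0,\undim W}<0 \ \text{ for every proper nonzero submodule } W\subsetneq M_\alpha.
\]

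Combining the two steps finishes the proof: if $x_0\in\interior D(\alpha)$ then $\brk{x_0,\undim W}<0$ for all proper nonzero $W$, so by the first step none of them lies in $\cW(x_0)$ and $M_\alpha$ is minimal; conversely, if $x_0\notin\interior D(\alpha)$ then some proper nonzero $W$ has $\brk{x_0,\undim W}=0$, hence $W\in\cW(x_0)$ and $M_\alpha$ is not minimal. I expect the only delicate point to be the middle paragraph: one must rule out that a tight defining inequality at $x_0$ comes from an implicit equality rather than a genuine facet, and this is precisely where full dimensionality of $D(\alpha)$ together with indivisibility of $\alpha$ (so that no proper submodule has dimension vector a multiple of $\alpha$) are needed.
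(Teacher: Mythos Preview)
Your proof is correct and follows the same underlying idea as the paper: $x_0$ is in the interior of $D(\alpha)$ precisely when all the defining submodule inequalities are strict, and a submodule $W\subseteq M_\alpha$ lies in $\cW(x_0)$ precisely when its inequality becomes an equality.

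The paper's proof is much terser. It works with exceptional subroots $\gamma\subsetneq\alpha$ rather than arbitrary submodules (invoking implicitly Theorem~\ref{thm: equivalent definitions of D(b)} that subroot inequalities already cut out $D(\alpha)$), and it simply asserts that ``interior'' means $\brk{x_0,\gamma}<0$ for all proper subroots, then notes the converse ``follows in the same way.'' You do more work: you justify carefully why $D(\alpha)$ is full-dimensional in $H(\alpha)$ and why no defining constraint from a proper nonzero submodule can be an implicit equality (using indivisibility of the real root $\alpha$). This extra care is genuinely useful, since without it the passage from ``boundary point'' to ``some proper inequality is tight'' is not automatic for an arbitrary polyhedral cone. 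Your version also handles minimality directly via all submodules rather than routing through exceptional ones, which is cleaner given that $\cW(x_0)$ is wide and hence minimality is tested on arbitrary subobjects.
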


\begin{proof}
If $x_0$ lies in the interior of $D(\alpha)$, $\brk{x_0,\gamma}<0$ for all subroots $\gamma\subsetneq \alpha$. So, $x_0\notin D(\gamma)$. So, $\alpha$ is minimal. The converse follows in the same way.
\end{proof}

A wide subcategory $\cW\subset mod\text-\Lambda$ has \und{rank $k$} if it is isomorphic to the module category of an hereditary algebra with $k$ simple modules. More concretely, such a wide subcategory contains $k$ $\Hom$-orthogonal exceptional modules forming an exceptional sequence: $X_1,X_2,\cdots,X_k$. In other words, $\Ext(X_j,X_i)=0$ for $j\ge i$. And all other objects of $\cW$ are iterated extensions of the $X_i$ with each other. From this description we see that the $X_i$ are objects of $\cW$ of minimal length, i.e., proper subobjects and proper quotient objects of the $X_i$ do not lie in $\cW$. In particular, the $X_i$ are uniquely determined by $\cW$. In general, not every wide subcategory of $mod\text-\Lambda$ has finite rank. For example, when $\Lambda$ has infinite representation type, the subcategory of regular modules is a wide subcategory of infinite rank since the Auslander-Reiten translation functor $\tau$ is an automorphism on this subcategory.

One special case of a finite rank wide subcategory which we need in this paper is the case $k=n$.

\begin{thm}\label{thm: rank n wide subcategory}
Let $(E_1,\cdots,E_n)$ be an exceptional sequence of $\Hom$-orthogonal objects in $mod\text-\Lambda$. Then all $E_i$ are simple. In particular, $mod\text-\Lambda$ is the only wide subcategory of rank $n$.
\end{thm}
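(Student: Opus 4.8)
The plan is to recast the statement in terms of the wide subcategory generated by the $E_i$ and then show that this subcategory exhausts $mod\text-\Lambda$. Write $\cW$ for the smallest wide subcategory containing $E_1,\dots,E_n$; by the discussion preceding the theorem the $E_i$ are exactly its simple objects, so every object of $\cW$ is an iterated extension of the $E_i$. By Proposition \ref{prop: exceptional sequences are lin indep} the vectors $\undim E_1,\dots,\undim E_n$ are linearly independent and hence form a $\QQ$-basis of $K_0(mod\text-\Lambda)\otimes\QQ\cong\QQ^n$. The key observation is that the theorem is equivalent to the assertion $\cW=mod\text-\Lambda$: if $\cW$ is everything, then its simple objects, the $E_i$, are precisely the simple $\Lambda$-modules. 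Since $\cW$ is closed under extensions and every $\Lambda$-module is an iterated extension of simples, it therefore suffices to prove that each simple module $S_k$ lies in $\cW$.

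First I would set up an induction on $n$ using perpendicular categories. The perpendicular category $\cC=E_1^{\perp}$ is again the module category of a hereditary algebra, this time with $n-1$ simple modules, and $(E_2,\dots,E_n)$ is a Hom-orthogonal exceptional sequence inside it (each $E_j\in E_1^{\perp}$ for $j\ge 2$ since $1<j$). By the inductive hypothesis applied to $\cC$, these are the simple objects of $\cC$, so $\cC\subseteq\cW$. It remains to glue in $E_1$: using the perpendicular-category decomposition of a hereditary algebra along the exceptional object $E_1$, every module --- in particular every simple $S_k$ --- should be assembled by extensions, kernels, and cokernels from $E_1$ and objects of $E_1^{\perp}=\cC$. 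The point at which the extra hypothesis enters is exactly here: the exceptional-sequence condition only controls $\Hom(E_i,E_j)$ and $\Ext(E_i,E_j)$ for $i<j$, whereas the full Hom-orthogonality $\Hom(E_i,E_1)=0$ for $i\ge 2$ is what prevents the glued-in pieces from being nonsimple and forces the genuine simple $\Lambda$-modules into $\cW$.

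The hard part will be this last gluing step, namely proving that $E_1$ together with $\cC=E_1^{\perp}$ generates all of $mod\text-\Lambda$ as a wide subcategory. The subtlety is that the simple objects of a perpendicular category need not be simple $\Lambda$-modules, so linear independence of dimension vectors alone is not enough; one must invoke the structure theory of perpendicular categories for hereditary algebras and use Hom-orthogonality in both directions. An alternative, more computational route that avoids the induction is to work with the symmetrized Euler form: in the basis $\{\undim E_i\}$ this fixed form has positive diagonal entries $2\dim_K\End(E_i)$ and nonpositive off-diagonal entries (for $i\ne j$ one of the two Euler pairings vanishes by the exceptional-sequence condition and the other is $-\dim_K\Ext\le 0$), exactly as it does in the basis of simple roots. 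One would then argue that two such pairwise-obtuse bases lying in the positive orthant must coincide, whence $\undim E_i$ is a standard basis vector and $E_i=S_k$. The obstacle for this second approach is controlling the signs when the Euler form is not positive definite (the wild case), which is precisely where input from representation theory, rather than pure linear algebra, appears to be unavoidable.
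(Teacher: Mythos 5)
Your reduction is the right one --- show that the wide subcategory $\cW$ generated by $E_1,\dots,E_n$ is all of $mod\text-\Lambda$, then conclude that the $E_i$, being the minimal objects of $\cW$, are the simple modules --- and a perpendicular-category induction of this kind can in principle be pushed through. But the step that carries essentially all of the content, your ``gluing step'' (that $E_1$ together with its perpendicular category generates $mod\text-\Lambda$ as a wide subcategory), is exactly the step you do not prove: you write that every simple ``should be assembled by extensions, kernels, and cokernels from $E_1$ and objects of $\cC$'' and then concede this is ``the hard part.'' It is not a routine verification: wideness only closes $\cW$ under kernels and cokernels of morphisms \emph{between objects already known to lie in} $\cW$, so you cannot, for instance, take images of maps $E_1^a\to S_k$ before knowing $S_k\in\cW$. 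Filling this gap needs genuine input --- either the Geigle--Lenzing/Schofield theory of the adjoints to the inclusion $\cC\into mod\text-\Lambda$, or, as the paper does, the theorem of \cite{CB} and \cite{Ringel} that the braid group acts transitively on complete exceptional sequences: braid moves stay inside the generated wide subcategory, and since the simple modules themselves form a complete exceptional sequence, all simples lie in $\cW$ at once. As written, your argument assumes the theorem's essential content rather than proving it; your fallback Euler-form approach is likewise left incomplete (and, as you yourself note, it breaks down outside the positive definite case).

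Two further slips, both fixable but worth flagging. First, with the paper's convention ($E_i\in E_j^\perp$ for $i<j$, i.e.\ $\Hom(E_j,E_i)=0=\Ext(E_j,E_i)$), your claim that $E_j\in E_1^\perp$ for $j\ge 2$ ``since $1<j$'' is backwards: exceptionality gives $E_1\in E_j^\perp$, and while Hom-orthogonality does give $\Hom(E_1,E_j)=0$, the groups $\Ext(E_1,E_j)$ may well be nonzero (these are exactly what produce the commutator relations in $G(\cS)$). The induction must therefore pass to ${}^\perp E_1$, or else peel off $E_n$ and use $E_n^\perp$. Second, you locate the role of Hom-orthogonality in the vanishing of $\Hom(E_i,E_1)$ for $i\ge2$; that vanishing is automatic for any exceptional sequence. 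The genuinely extra hypothesis is $\Hom(E_i,E_j)=0$ for $i<j$, and it is indispensable: for the quiver $1\leftarrow 2$, the pair $(S_1,P_2)$ is a complete exceptional sequence which generates everything, yet $P_2$ is not simple --- here it is $\Hom(S_1,P_2)\neq0$ that fails. This hypothesis is needed both to apply the inductive hypothesis inside the perpendicular category and, at the end, to know that the $E_i$ are the minimal objects of $\cW$.
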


\begin{proof}
This follows from the theory of exceptional sequences. By \cite{CB} and \cite{Ringel}, the action of the braid group on $n$ strands acts transitively on the set of exceptional sequences of length $n$. However, by definition, braid moves keep objects in the same wide subcategory which is the category of all objects which are iterated extensions of the $E_i$ with each other. By the theorem of \cite{CB} and \cite{Ringel}, this includes all exceptional sequences. But the sequence of simple modules of $mod\text-\Lambda$ forms an exceptional sequence. So, every simple $\Lambda$-module is in our wide subcategory. So, the wide subcategory is all of $mod\text-\Lambda$. Since the $E_i$ are minimal objects, they must all be simple.
\end{proof}

Let $\alpha_1,\cdots,\alpha_k$ be real Schur roots so that $(M_{\alpha_1},\cdots,M_{\alpha_k})$ is a sequence of $\Hom$-orthogonal sequence of modules forming an exceptional sequence. Then we denote by $\cA(\alpha_1,\cdots,\alpha_k)$, or $\cA(\alpha_\ast)$ for short, the wide subcategory of $mod\text-\Lambda$ generated by the modules $M_{\alpha_i}$. As remarked above, this is a rank $k$ wide subcategory whose objects have a filtration with subquotients $M_{\alpha_i}$. Another description is:
\[
	\cA(\alpha_1,\cdots,\alpha_k)=\,^\perp\left(  (M_{\alpha_1}\oplus\cdots\oplus M_{\alpha_k})^\perp\right)
\]
In other words, $\cA(\alpha_\ast)=\,^\perp (E_1\oplus\cdots\oplus E_{n-k})$ for any choice of a complete exceptional sequence $(E_1,\cdots,E_{n-k},M_{\alpha_1},\cdots,M_{\alpha_k})$ ending in the $M_{\alpha_i}$.

Here is another well-known fact that we need.

\begin{thm}\label{thm: wide subcategory gen by hom-orthogonal roots}
The wide subcategory $\cW=\cA(\alpha_1,\cdots,\alpha_k)$ described above contains the exceptional module $M_\beta$ if and only if $\beta$ is a nonnegative linear combination of the $\alpha_i$.
\end{thm}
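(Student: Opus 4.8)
I would prove the two implications separately. The forward implication is immediate from the structure of $\cW=\cA(\alpha_1,\dots,\alpha_k)$ recalled above: every object of $\cW$ has a filtration whose subquotients lie in $\{M_{\alpha_1},\dots,M_{\alpha_k}\}$, so its dimension vector is a nonnegative integer combination of $\alpha_1,\dots,\alpha_k$; in particular $M_\beta\in\cW$ forces $\beta\in\sum_i\NN\alpha_i$. The real content is the converse, so I would assume $\beta=\sum_{i=1}^k n_i\alpha_i$ with all $n_i\in\NN$ and $M_\beta$ exceptional, and show $M_\beta\in\cW$.

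The plan is to use the perpendicular description $\cW={}^\perp(E_1\oplus\cdots\oplus E_{n-k})$ coming from a complete exceptional sequence $(E_1,\dots,E_{n-k},M_{\alpha_1},\dots,M_{\alpha_k})$, so that $M_\beta\in\cW$ if and only if $M_\beta\in{}^\perp E_j$ for every $j$. The geometric input is that each set $D(\undim E_j)$ is cut out by homogeneous linear conditions, hence is a convex cone, while the $g$-vector is linear in the dimension vector: $g(M_\beta)=C_\Lambda^{-1}\beta=\sum_i n_i\,g(M_{\alpha_i})$. Since every $M_{\alpha_i}$ lies in $\cW\subseteq{}^\perp E_j$, Theorem \ref{thm: characterization of perp M} gives $g(M_{\alpha_i})\in D(\undim E_j)$, and therefore the nonnegative combination $g(M_\beta)$ also lies in $D(\undim E_j)$ for every $j$.

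To upgrade this numerical information to genuine membership I would induct on $n-k$. The base case $n-k=0$ is Theorem \ref{thm: rank n wide subcategory}. For the inductive step put $E=E_1$ and $\gamma=\undim E$; it suffices to prove $M_\beta\in{}^\perp E$, since ${}^\perp E$ is a rank $n-1$ wide subcategory equivalent to $mod\text-H$ for a hereditary $H$, inside which $(E_2,\dots,E_{n-k},M_{\alpha_1},\dots,M_{\alpha_k})$ is a complete exceptional sequence with $\cW={}^\perp(E_2\oplus\cdots\oplus E_{n-k})$ and $M_\beta$ is exceptional with dimension vector in $\sum_i\NN\alpha_i$; the inductive hypothesis then places $M_\beta$ in $\cW$. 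To get $M_\beta\in{}^\perp E$ from $g(M_\beta)\in D(\gamma)$ I would apply the converse half of Theorem \ref{thm: characterization of perp M}, which writes $g(M_\beta)=g(X)-g(P)$ with $X,P\in{}^\perp E$ and $P$ projective; because $M_\beta$ is exceptional, hence rigid, its $g$-vector determines it among rigid objects, forcing $P=0$ and $X\cong M_\beta$, whence $M_\beta\in{}^\perp E$.

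The main obstacle is precisely this last upgrade, namely passing from the cone condition $g(M_\beta)\in D(\gamma)$ to the categorical vanishing $\Hom(M_\beta,E)=0=\Ext(M_\beta,E)$. This step genuinely uses that $M_\beta$ is exceptional: the identity $\brk{g(M_\beta),\gamma}=0$ only yields $\hom(M_\beta,E)=\ext(M_\beta,E)$, and for decomposable modules this common value can be nonzero while the $g$-vector still lands in $D(\gamma)$ (for instance $S_1\oplus S_2$ in type $A_2$). Thus the crux is the injectivity of the $g$-vector map on rigid objects, which is what rules out a cancellation of $\Hom$ against $\Ext$ and forces $M_\beta$ into the perpendicular category.
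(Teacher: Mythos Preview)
Your overall strategy coincides with the paper's: extend to a complete exceptional sequence so that $\cW={}^\perp(E_1\oplus\cdots\oplus E_{n-k})$, use linearity of $g$ together with convexity of $\bigcap_j D(\undim E_j)$ to place $g(M_\beta)$ in this intersection, and then upgrade from the $g$-vector condition to actual membership in ${}^\perp E_j$. The paper does exactly this, but without your induction on $n-k$: it simply treats each $E_j$ separately, so the inductive scaffolding is unnecessary.

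There is, however, a genuine gap in your upgrade step. You invoke the converse half of Theorem~\ref{thm: characterization of perp M} to write $g(M_\beta)=g(X)-g(P)$ with $X,P\in{}^\perp E$, and then say ``its $g$-vector determines it among rigid objects, forcing $P=0$ and $X\cong M_\beta$.'' But the Virtual Stability Theorem as stated in the paper does not assert that $X$ (or $X\oplus P[1]$) is rigid, so the uniqueness of rigid objects with a given $g$-vector does not apply: you only know that \emph{one} side of the equation $g(M_\beta)=g(X)-g(P)$ is rigid. The paper sidesteps this by simply asserting that for an exceptional module $M_\beta$ one has $M_\beta\in{}^\perp E_j$ if and only if $g(\beta)\in D(\undim E_j)$, treating this as part of what Theorem~\ref{thm: characterization of perp M} provides.

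If you want to make that step airtight without assuming more than is stated, the clean argument runs as follows. Since $M_\beta$ is rigid over a hereditary algebra, $\mathrm{Gen}(M_\beta)$ is a torsion class; let $V'\subseteq E_j$ be the torsion part. Then $\Ext(M_\beta,V')=0$ (as $V'$ is a quotient of a power of $M_\beta$) and $\Hom(M_\beta,E_j/V')=0$, so $\brk{g(M_\beta),\undim V'}=\hom(M_\beta,V')=\hom(M_\beta,E_j)$. If this were positive it would violate the stability inequality defining $D(\undim E_j)$; hence $\Hom(M_\beta,E_j)=0$, and then $\brk{g(M_\beta),\undim E_j}=0$ forces $\Ext(M_\beta,E_j)=0$ as well. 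This replaces your appeal to $g$-vector injectivity and makes the induction unnecessary.
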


\begin{proof} Necessity of this condition is clear since all objects of $\cW$ are iterated extensions of the modules $M_{\alpha_i}$. For the converse, we choose an extension of this sequence to a complete exceptional sequence 
$(E_1,\cdots,E_{n-k},M_{\alpha_1},\cdots,M_{\alpha_k})$. Then $\cW=\,^\perp(E_1\oplus\cdots\oplus E_{n-k})$. By Theorem \ref{thm: characterization of perp M}, an exceptional module $M_\beta$ lies in $\cW$ if and only if $g(\beta)\in \bigcap_jD(E_j)$. But this is a convex set. Since this condition holds for the roots $\alpha_i$, it holds for any nonnegative linear combination of the $\alpha_i$.
\end{proof}

For an admissible set of roots $\cS$, this theorem and Proposition \ref{prop: x0 in interior of D(a)} imply the following.

\begin{cor}\label{cor: W(x0) cap S}
For $x_0\neq0\in\RR^n$, let $\alpha_1,\cdots,\alpha_k$ be the elements of $\cS$ for which $M_{\alpha_i}$ is minimal in $\cW(x_0)=\{M\,:\,x_0\in D(M)\}$. Then, $\cS\cap \cW(x_0)$ is the set of elements of $\cS$ which are sums of these roots ($\beta=\sum n_i\alpha_i$ for $n_i\ge0$).
\end{cor}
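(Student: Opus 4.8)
The plan is to reduce the statement to the fact that an admissible set $\cS$ is closed under subroots and quotient roots (which holds because $\cS$ is vertically ordered, by the very definition of vertical ordering). Write $\cH:=\cW(x_0)$, which is a wide subcategory by Theorem \ref{thm: W(x0) is wide}, and let $M_{\gamma_1},\dots,M_{\gamma_r}$ be its minimal objects, a hom-orthogonal exceptional sequence. By Proposition \ref{prop: x0 in interior of D(a)}, the roots $\alpha_1,\dots,\alpha_k$ in the statement are exactly those $\gamma_j$ which happen to lie in $\cS$.

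First I would dispose of the easy inclusion. If $\beta\in\cS$ and $\beta=\sum_i n_i\alpha_i$ with $n_i\ge 0$, then $\beta$ is a nonnegative combination of the $\gamma_j$, so $M_\beta\in\cH=\cW(x_0)$ by Theorem \ref{thm: wide subcategory gen by hom-orthogonal roots}; that is, $x_0\in D(\beta)$, and hence $\beta\in\cS\cap\cW(x_0)$. For the reverse inclusion, take $\beta\in\cS\cap\cW(x_0)$. By Theorem \ref{thm: wide subcategory gen by hom-orthogonal roots} we may write $\beta=\sum_j m_j\gamma_j$ with $m_j\ge 0$, and it suffices to prove that $\gamma_j\in\cS$ whenever $m_j>0$: then each such $\gamma_j$ is one of the $\alpha_i$ and $\beta$ is a nonnegative combination of the $\alpha_i$, as required.

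I would establish this last claim by induction on the length of $M_\beta$ in the hereditary category $\cH$. If $M_\beta$ is simple in $\cH$ then $\beta=\gamma_j$ for a single $j$, and $\gamma_j=\beta\in\cS$. If $M_\beta$ is not simple, then each simple object of its $\cH$-socle is an exceptional submodule of $M_\beta$, hence (as subobjects in $\cH$ are genuine submodules in $mod\text-\Lambda$) a subroot of $\beta$, and by closure of $\cS$ under subroots it lies in $\cS$; dually, each simple in the $\cH$-top is a quotient root of $\beta$ and so lies in $\cS$. These socle and top simples are minimal objects of $\cH$ lying in $\cS$, hence among the $\alpha_i$, which settles the claim for all simples appearing in the socle or top.

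The remaining, and main, difficulty is to reach the simples $\gamma_j$ occurring \emph{strictly inside} the $\cH$-Loewy structure of $M_\beta$. For these I would invoke the following sublemma: if $E$ is an exceptional non-simple object of a hereditary category and $S$ is any simple in its support, then $S$ lies in the support of some \emph{proper} exceptional subobject or proper exceptional quotient object $E'$ of $E$. Granting this, $\undim E'$ is a subroot or quotient root of $\beta$, so $\undim E'\in\cS$ by closure, and since $E'$ has strictly smaller length the induction hypothesis gives $S\in\cS$. The sublemma is immediate in finite representation type, where every indecomposable is exceptional: one peels off the socle and recurses on the indecomposable summands of $M_\beta/\mathrm{soc}\,M_\beta$, each of which is a quotient of $M_\beta$ and hence exceptional with dimension vector a quotient root of $\beta$; this already covers the setting of the main theorem. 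In general the sublemma should follow from the tree-module structure of exceptional representations of hereditary algebras, peeling a leaf of the coefficient tree to exhibit a proper exceptional subobject or quotient through any prescribed vertex of the support. This last point — producing, for each interior simple, a proper exceptional subquotient whose support contains it — is the step I expect to require the most care, since in wild type a subquotient of a rigid module need not itself be rigid.
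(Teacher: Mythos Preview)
Your argument has a genuine gap, and it is precisely the one you flag yourself: the ``sublemma'' that every simple $\gamma_j$ in the $\cH$-support of $M_\beta$ appears in the support of some proper exceptional subobject or quotient object. You prove the socle and top cases cleanly, but the interior simples are not handled, and your proposed route through tree-module structure is speculative. Without that step the induction does not close, so as written the proof is incomplete outside finite type.

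The paper avoids this difficulty entirely by organizing the induction differently. Rather than trying to certify that each $\gamma_j$ with $m_j>0$ lies in $\cS$, the paper decomposes $\beta$ itself. If $\beta$ is not one of the $\alpha_i$, then $M_\beta$ is not minimal in $\cW(x_0)$, so by Proposition~\ref{prop: x0 in interior of D(a)} the point $x_0$ lies on $\partial D(\beta)$; this forces $\brk{x_0,\gamma}=0$ for some proper subroot $\gamma\subsetneq\beta$, and hence $x_0\in D(\gamma)$. Admissibility of $\cS$ puts $\gamma$ (and the components $\gamma'$ of the quotient $\beta-\gamma$, which likewise satisfy $x_0\in D(\gamma')$) into $\cS$, and all of them have smaller length than $\beta$. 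The induction hypothesis then writes each of $\gamma$ and the $\gamma'$ as nonnegative sums of the $\alpha_i$, and adding these expressions gives the same for $\beta$. No statement about ``interior'' simples of $\cH$ is ever needed: the boundary condition on $D(\beta)$ hands you a concrete splitting of $\beta$ into strictly smaller elements of $\cS\cap\cW(x_0)$, and that is all the induction requires.
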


\begin{proof}
Let $\beta\in \cS\cap \cW(x_0)$. So, $x_0\in D(\beta)$. If $\beta$ is not one of the $\alpha_i$ then, by Proposition \ref{prop: x0 in interior of D(a)}, $x_0\in \partial D(\beta)$. This implies that $x_0\in D(\gamma)$ for a subroot $\gamma\subsetneq \beta$. It follows that $x_0\in D(\gamma')$ for all components $\gamma'$ of the quotient root $\beta-\gamma$. These subroots and quotient roots of $\beta$ all lie in $\cS$ since $\cS$ is admissible. By induction on the length of $\beta$ we conclude that each $\gamma,\gamma'$ is a nonnegative linear combination of the $\alpha_i$. So, the same holds for their sum $\beta$.

Conversely, suppose $\beta\in\cS$ has the form $\beta=\sum n_i\alpha_i$ for $n_i\ge0$. Since $\cS$ is admissible, the modules $M_{\alpha_1},\cdots,M_{\alpha_k}$ are $\Hom$-orthogonal and form an exceptional sequence (being in lateral order). By Theorem \ref{thm: wide subcategory gen by hom-orthogonal roots}, $M_\beta$ lies in the wide subcategory $\cW(x_0)$ as claimed.
\end{proof}

%%%%%%%%%%%%%%%%%%%%%%%%%%%%%%%%%%%%%%%%%%%%%%%%
\section*{Acknowledgements} The authors thank Thomas Br\"ustle, Eric Hanson, Steve Hermes, Moses Kim, Kent Orr and Jerzy Weyman for numerous discussions about ``pictures'' and their relation to maximal green sequences. The first author acknowledges support of the Simons Foundation. Both authors are grateful to the referees for numerous very helpful comments and also for their interest in the history of this subject.

\end{document}